\newcommand{\acks}{\bigskip\par\noindent{\bf Acknowledgement}\par}
    \newtheorem{lemma}{Lemma}[section]
    \newtheorem{theorem}[lemma]{Theorem}
    \newtheorem{definition}[lemma]{Definition}
     \newtheorem{corollary}[lemma]{Corollary}
    \newtheorem{remark}[lemma]{Remark}
\newcommand{\no}{{\nonumber}}
\newcommand{\ov}{\overline}
\newcommand{\bear}{\begin{array}}
\newcommand{\enar}{\end{array}}
\newcommand{\beq}{\begin{equation}}
\newcommand{\eeq}{\end{equation}}
\newcommand{\beqn}{\begin{eqnarray}}
\newcommand{\eeqn}{\end{eqnarray}}
\newcommand{\beit}{\begin{itemize}}
\newcommand{\eeit}{\end{itemize}}
\renewcommand{\d}{\,{\rm d}}
\newcommand{\ee}{{\rm e}}
\newcommand{\rsp}{{\bf R}}
\newcommand{\csp}{{\bf C}}
\newcommand{\nsp}{{\bf N}}
\renewcommand{\div}{{\rm div}\:}
\newcommand{\ds}{\displaystyle}
\newcommand{\e}{\epsilon}
\newcommand{\ve}{\varepsilon}
\newcommand{\g}{\gamma}
\renewcommand{\l}{\lambda}
\newcommand{\dl}{\delta}
\newcommand{\Dl}{\Delta}
\newcommand{\s}{\sigma}
\newcommand{\G}{\Gamma}
\newcommand{\om}{\omega}
\newcommand{\Om}{\Omega}
\renewcommand{\a}{\alpha}
\renewcommand{\b}{\beta}
\newcommand{\what}{\widehat}
\newcommand{\wtil}[1]{\widetilde{#1}}
\newcommand{\pn}{\par \noindent}
\newcommand{\med}{\medskip}
\newcommand{\qq}{\qquad}
\newcommand{\q}{\quad}
\newcommand{\hookto}{\hookrightarrow}
\title{{$L^2$ estimates for the eigenfunctions}
{corresponding\\ to real eigenvalues of the Tricomi operator}}
\author{{\rm Alberto Favaron}\footnote
{Dipartimento di Matematica ``F. Brioschi'', Politecnico di Milano,
via Bonardi 9, 20133 Milano, Italy. Email: alberto.favaron@polimi.it}}
\date{}
\begin{document}
\maketitle
\pn
{\bf Abstract.}
We introduce a family ${\cal F}$ of normal Tricomi domains 
$\Om_{\a,\b}$, $\a>0>\b$, and we show that its elements
are $D$-star-shaped with respect to the vector field
$D=-3x\partial_x-2y\partial_y$ if and only if $\a\ge1/2$. 
Provided that the underlying domain $\Om$ belongs to ${\cal F}$ 
for some $\a\ge1/2$, in Theorem \ref{thm4.20} we then establish $L^2$ estimates 
for the eigenfunctions corresponding to real eigenvalues 
of the Tricomi operator. In particular, our result highlights
a dependency of these estimates on the values 
of $\a$ and $\b$ and the parabolic diameter of $\Om$.
\med\pn
{\bf Keywords:} {\it Spectral theory; Tricomi operator; $L^2$ eigenfunction bounds.} 
\med\pn
{\bf MSC2000}: Primary 35P05; Secondary 35M10, 35B45.
\section{Introduction}\label{Sec1}
\setcounter{equation}{0}
In this paper we deal with the problem of establishing $L^2$ estimates
for the eigenfunctions corresponding to real eigenvalues of the Tricomi problem,
i.e. the nontrivial solutions to
\beqn\label{1.1}
&&\hskip -1truecm
\left\{\!\!
\begin{array}{lll}
Tu=\l u\q {\rm in}\ \Om,
\\[2mm]
u=0\q{\rm on}\ AC\cup\s,
\end{array}
\right.
\q\l\in\rsp,
\qq
\begin{array}{lll}
\includegraphics[width=1.3in]{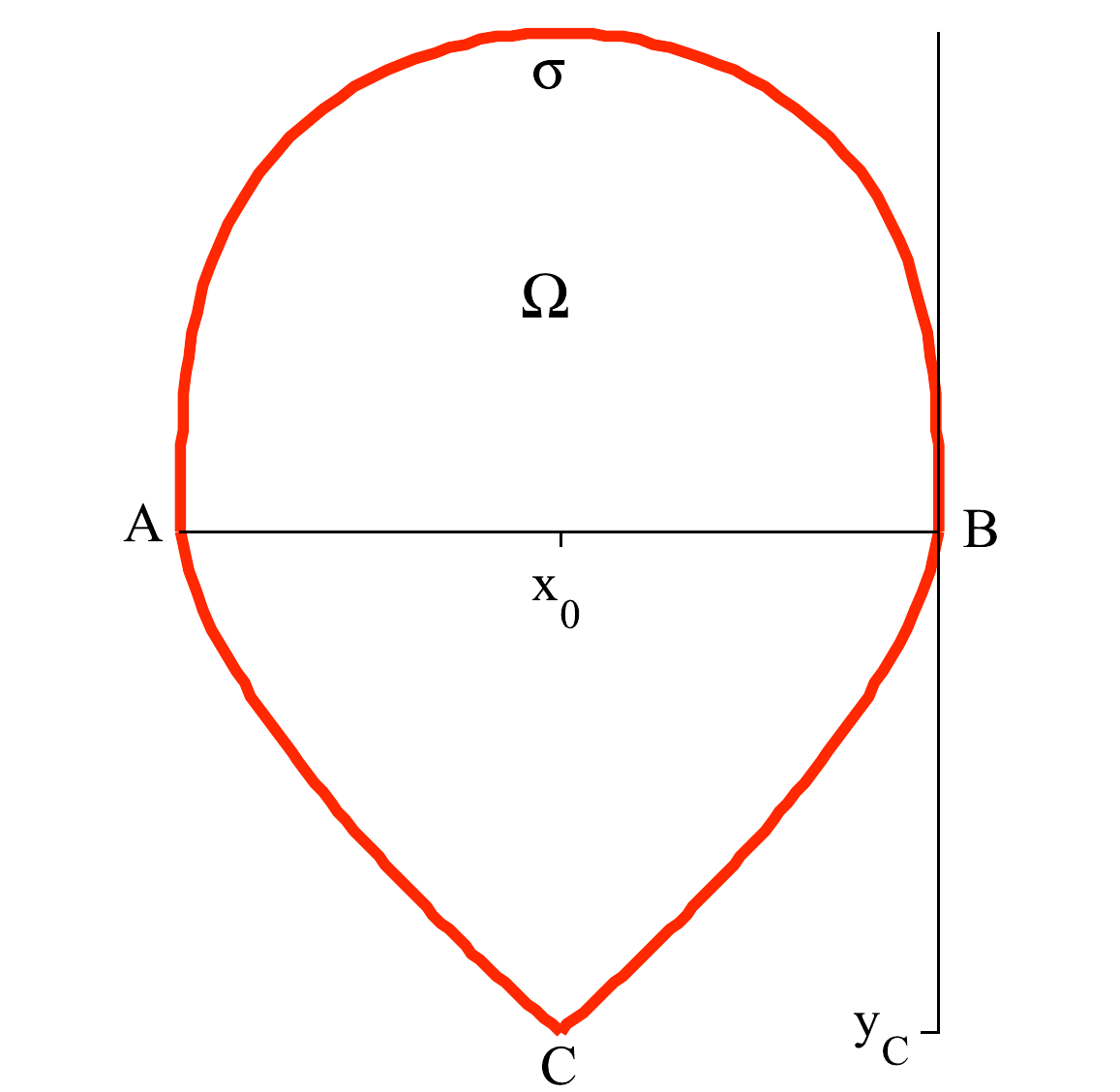}
\end{array}
\eeqn
where $T=-y\partial_x^2-\partial_y^2$ is the Tricomi operator on $\rsp^2$.
Here $\Om$ is a {\it Tricomi domain}; that is,
a simply connected bounded region of the plane
whose boundary $\partial\Om$ consists of
the elliptic arc $\s$ joining $A=(2x_0,0)$, $x_0<0$, to $B=(0,0)$
in the region $y>0$ and the two characteristics 
$AC$ and $BC$ for $T$ which lie in the half-plane $y\le 0$ 
and meet at the point $C=(x_0,y_C)$, $y_C<0$
(see Section \ref{Sec2} for a precise description).

Due to its physical importance, which derives from
its relations with the theory 
of two-dimensional transonic fluid flows first observed in \cite{Fr}, 
the literature concerning the question of the unique solvability
and the research of the Green's function 
for the underlying Tricomi problem (\ref{1.1}), 
with $\l u$ being replaced by $h\in L^2(\Om)$,
is nowadays very wide. See, for instance, the papers
\cite{Ag}, \cite{BN1}--\cite{BN3}, \cite{Di}, \cite{GB}, \cite{LP2}, \cite{Pr}
and the references therein.

On the contrary, only in quite recent times
there has been a growing interest towards a development 
of a clear spectral theory for the Tricomi operator; an interest
mainly motivated by the perspectives of making substantial 
progresses in the study of associated nonlinear problems,
(see \cite{Ga}, \cite{LP3}, \cite{LP4}, \cite{Mo1} and \cite{Mo2}).
The main results in this direction are probably those in
\cite{LP3} and \cite{LP4} where, 
provided that $\Om$ is {\it normal} in the sense that 
the elliptic arc $\s$ is perpendicular to the $x$-axis at the boundary 
points $A$ and $B$, it is shown
that a principal eigenvalue $\l_0>0$ exists such
that all the other real eigenvalues, if any, belong to $(\l_0,+\infty)$.
Employing the linear solvability theory combined with nonlinear analysis tools, 
such a spectral information is then exploited in \cite{LP4}
to derive existence and uniqueness for semilinear Tricomi problems.

Differently from \cite{LP4}, it is our aim, here,
to use the informations on the real spectrum of $T$ to show that,
if $\s=\s_{\a,\b}$ is given explicitly as the graph of the function
\beqn\label{1.2}
&&\hskip -0truecm
g_{\a,\b}(x)=\Big[\frac{(\a+1)|\b|x(2x_0-x)}{2}\Big]^{1/(\a+1)},
\q x\in[2x_0,0],\ \a>0>\b,
\eeqn
and if $u$ is an eigenfunction corresponding to $\l\in[\l_0,+\infty)$ 
enjoying some further regularity on the subset $\g_1\cup\g_2=BC\cup\s$ 
of $\partial\Om$, then the norm $\|u\|_{L^2(\Om)}$ is upper bounded 
by $\l^{-1/2}$ times a quantity depending on $\|u\|_{L^2(BC)}$,  
$\||y|^{1/2}u_x\|_{L^2(\g_j)}$, $\|u_y\|_{L^2(\g_j)}$, $j=1,2$, and 
the triplet $(\a,\b,x_0)$. 
Of course, here the mentioned results about the existence of $\l_0$
applies since, for construction, the curves $\s_{\a,\b}$
are perpendicular to the $x$-axis at the points $A$ and $B$ (see Section \ref{Sec4}).
Our $L^2$ eigenfunction bounds come out from an application 
to problem (\ref{1.1}) of the Poho\v zaev-type identity derived in \cite{LP5} for the
more general semilinear problem 
\beqn\label{1.3}
&&\hskip -1truecm
\left\{\!\!
\begin{array}{lll}
Tu=f(u)\q {\rm in}\ \Om,
\\[2mm]
u=0\q{\rm on}\ AC\cup\s,
\end{array}
\right.
\eeqn
where $f\in C^0(\rsp)$, and then estimating the right-hand side of
such an identity taking advantage from the fact that 
$\s$ belongs to the family of graphs $\s_{\a,\b}$, $\a>0>\b$.
We stress that this choice for $\s$ 
is motivated by two essential reasons. 
At first, if $\a\ge1/2$, then it makes each domain $\Om_{\a,\b}$, 
$\partial\Om_{\a,\b}=AC\cup BC\cup\s_{\a,\b}$,
a concrete example of domain $D$-star-shaped with respect 
to the vector field $D=-3x\partial_x-2y\partial_y$, a notion introduced
in \cite{LP5} only from an abstract point of view. 
In a certain sense, since for the choice $(\a,\b)=(2,-3/2)$ 
we get back the famous normal Tricomi curve, 
this also shows that the initial intuition of Tricomi of considering
the graph of $g_{2,-3/2}$ as the elliptic part of $\partial\Om$ (see \cite{Tr}) 
was correct, even though he was unaware of the notion of $D$-star-shapedness.
Secondly, it allows us to compute exactly the unit outer normal to $\s$ 
entering in the right-hand side of the quoted Poho\v zaev identity 
and hence to derive explicit formulae for the constants depending 
on $(\a,\b,x_0)$ in our estimates.
In particular, our computations exhibit the unexpected fact that
to each fixed pair $(\a,\b)$, $\a\ge 1/2$, $\a\neq1$, 
there corresponds a ``critical" value of $x_0$, 
in the sense that our constants change according 
to the fact that the half length $|x_0|$ of the parabolic segment $AB$
of $\Om$ is greater or not than a precise quantity depending on $\a$ and $\b$. 
As we shall see in Section \ref{Sec4}, 
this quantity takes essentially three different forms according
to the cases $\a=1/2$, $\a\in(1/2,1)$ or $\a>1$.
On the contrary, if $\a=1$, then the role of ``critical" value is played 
by the value $\b=-1$ which corresponds to the case when
the length of the outer normal vector to $\s_{1,\b}$ is constant equal to $|x_0|$.

It is also worth to observe that $L^2$--$L^p$-bounds for spectral projections
onto eigenspaces, as those derived in \cite{KR} and \cite{KT} for the
twisted Laplacian and the Hermite operator, respectively, 
are still lacking for the Tricomi operator. It thus seems to us
that our $L^2$ eigenfunction bounds may represent a first step in this direction
as well as in the research of some asymptotic estimate for the 
real eigenvalues of $T$.

The article is organized as follows.
In Section \ref{Sec2} we define the weighted Sobolev spaces
$\wtil W_{AC\cup\s}^1(\Om)$ and we give an overview 
of the linear solvability theory for the Tricomi problem 
developed in \cite{LP2} when $\Om$ is a normal Tricomi domain. 
This yields also to recall the main results of spectral theory for the 
Tricomi operator established in \cite{LP3} and \cite{LP4}.

Section \ref{Sec3} is devoted to introduce the notion
of $D$-star-shapedness, $D=-3x\partial_x-2y\partial_y$, 
and the Poho\v zaev identity of \cite{LP5} for the semilinear problem (\ref{1.3}). 
Moreover, recalling the basic symmetry groups 
that generate conservation laws for problem (\ref{1.3}) we are naturally led
to introduce the normal Tricomi curve $g_T=g_{2,-3/2}$ which constitutes 
the prototype for the construction of the functions $g_{\a,\b}$ 
defined by (\ref{1.2}).

Section \ref{Sec4} is the core of the paper. At first, we construct
the functions $g_{\a,\b}$ so that the domains $\Om_{\a,\b}$
having boundary $\partial\Om_{\a,\b}=AC\cup BC\cup\s_{\a,\b}$,
represent a family of normal Tricomi domains. Then, in Lemma \ref{lem4.3}
we show that $\Om_{\a,\b}$ is $D$-star-shaped 
in the sense of Section \ref{Sec3} if and only if $\a\ge1/2$.
We then prove the three preliminary Lemmas \ref{lem4.4}, \ref{lem4.5}
and \ref{lem4.18} which supply estimates for the line integrals
on the right-hand side of the Poho\v zaev identity.
Finally, combining the quoted lemmas with the fact that when $f(u)=\l u$, 
$\l\in[\l_0,+\infty)$, the left-hand side of the Poho\v zaev identity 
reduces to $4\l\|u\|_{L^2(\Om)}^2$,
in Theorem \ref{thm4.20} we prove our $L^2$ eigenfuctions bounds.

In Section \ref{Sec5} we give the proofs 
of the technical Lemmas \ref{lem4.8} and \ref{lem4.13} 
and Corollaries \ref{cor4.10} and \ref{cor4.16}, 
which are basic for proving Lemma \ref{lem4.18}.
In particular, Lemma \ref{lem4.8} provides the necessary estimates
on the modulus $h_{\a,\b}(x)$ of the normal vector to $\s_{\a,\b}$ 
at the point $(x,g_{\a,\b}(x))$ (see formulae (\ref{4.11}) and (\ref{4.12}))
and highlights their dependence on the values of $\a$, $\b$ and $x_0$.
Such estimates are then used in Corollary \ref{cor4.10}, 
Lemma \ref{lem4.13} and Corollary \ref{cor4.16} 
to deduce upper and lower bounds of two functions $\Theta_{\a,\b}(x)$
and $\Psi_{\a,\b}(x)$ entering the proof of Lemma \ref{lem4.18}.
Notice that, although $\Theta_{\a,\b}$ and $\Psi_{\a,\b}$ 
depend on one single real variable,
they both elude the standard methods of calculus for
finding greatest and least values, due to the computational
difficulty of locating their stationary points 
(see Remarks \ref{rem5.2} and \ref{rem5.3}).

We conclude the paper in Section \ref{Sec6} with some remarks
on the regularity assumptions of Theorem \ref{thm4.20}
and the possible links between our estimate 
and the open problem of finding, if any, eigenvalue asymptotics
for the Tricomi operator. 
\section{The Tricomi problem}\label{Sec2}
\setcounter{equation}{0}
The Tricomi operator $T$ in two independent variable $x$ and $y$ 
is the second order linear partial differential operator
\beqn\label{2.1}
&&\hskip -1truecm
T=-y\partial_x^2-\partial_y^2,
\eeqn
which is {\it elliptic} in the half-plane $y>0$, {\it parabolic} along the $x$ axis,
and {\it hyperbolic} in half-plane $y<0$. 
A subset $\Om\subset\rsp^2$  is said a {\it Tricomi domain} for  $T$ 
if $\Om$ is an open, bounded, simply connected set of $\rsp^2$ with
$C^1$ piecewise boundary $\partial\Om=AC\cup BC\cup\s$,
where $AC$ and $BC$ are the characteristic of negative and positive slopes
respectively issuing from the points $A=(2x_0,0)$ and $B=(0,0)$, $x_0<0$,
and meeting at the point $C=(x_C,y_C)$ in the hyperbolic region $y<0$.
The curve $\s$ is instead piecewise $C^1$ simple, joining $A$ to $B$ in
the elliptic region $y>0$. Of course, one has the explicit representation
\beqn\label{2.2}
&&\hskip -2truecm
AC=\{(x,y)\in\rsp^2:\ y_C\le y\le 0,\ 3(x-2x_0)=2(-y)^{3/2}\},
\\[2mm]
\label{2.3}
&&\hskip -2truecm
BC=\{(x,y)\in\rsp^2:\ y_C\le y\le 0,\ 3x=-2(-y)^{3/2}\},
\eeqn
so that $C=(x_0, -(3|x_0|/2)^{2/3})$. Due to the parabolic character of $T$ along 
the $x$ axis, the segment $AB=\{(x,y)\in\rsp^2:2x_0< x< 0, y=0\}$ is called the
parabolic segment of $\Om$, and its length $|AB|=2|x_0|$ is called the parabolic 
diameter of $\Om$.

For a connected subset $\G$ of $\partial\Om$
consider the following spaces of smooth real valued functions
\beqn\no
C_{0,\G}^\infty(\ov\Om;\rsp)
=\{\psi\in C^\infty(\ov\Om;\rsp): \psi\equiv 0\ {\rm on}\ N_\e\G\ {\rm for\ some}\ \e>0\},
\eeqn
where $N_\e\G=
\{(x,y)\in\Om:{\rm dist}((x,y);\G)=\inf_{(\wtil x,\wtil y)\in\G}|(x,y)-(\wtil x,\wtil y)|<\e\}$
and $C^\infty(\ov\Om;\rsp)$ denotes the set of
all functions from $\Om$ to $\rsp$ whose derivatives of any order
are continuos in $\Om$ and admit continuos extension up to  the boundary
$\partial\Om$. To simplify notations, from now on, we shall always write 
$C_{0,\G}^\infty(\ov\Om)$ in place of $C_{0,\G}^\infty(\ov\Om;\rsp)$.
Then, denote by
$\wtil W_\G^1(\Om)$ the weighted Sobolev space obtained as closure of 
$C_{0,\G}^\infty(\ov\Om)$ with respect to the norm
\beqn\no
&&\hskip -2truecm
\|\psi\|_{\wtil W_\G^1(\Om)}^2=\|\psi\|_{\wtil W^{1,2}(\Om)}^2
=\int_\Om(|y|\psi_x^2+\psi_y^2+\psi^2)\d x\d y.
\eeqn
Finally, the dual space $\wtil W_\G^{-1}(\Om)$ of $\wtil W_\G^1(\Om)$
is chararacterized as the norm closure of $L^2(\Om)$ with respect to the norm 
$\|w\|_{\wtil W_\G^{-1}(\Om)}=
\sup_{\|\psi\|_{\wtil W_\G^1(\Om)}=1}|(w,\psi)_2|$,
where $(\cdot,\cdot)_2$ is the standard inner (real) product of $L^2(\Om)$.
Obviously, $\wtil W_\G^1(\Om)\subset L^2(\Om)\subset \wtil W_\G^{-1}(\Om)$.
Moreover (see \cite[p. 538]{LP2}), 
using the definition of the $\wtil W_\G^{-1}(\Om)$-norm 
it is easy to show that there exist positive constants $c_j$, $j=1,2$, such that
\beqn\label{2.4}
&&\hskip -1truecm
\|Tu\|_{\wtil W_{BC\cup\s}^{-1}(\Om)}\le c_1\|u\|_{\wtil W_{AC\cup\s}^1(\Om)},
\q \forall\, u\in C_{0,AC\cup\s}^\infty(\ov\Om),
\\[2mm]
\label{2.5}
&&\hskip -1truecm
\|Tv\|_{\wtil W_{AC\cup\s}^{-1}(\Om)}\le c_2\|v\|_{\wtil W_{BC\cup\s}^1(\Om)},
\q \forall\, v\in C_{0,BC\cup\s}^\infty(\ov\Om).
\eeqn
The continuity estimates (\ref{2.4}) and (\ref{2.5}) give rise to the 
continuous extensions
\beqn\label{2.6}
&&\hskip -1truecm
\wtil T_{AC\cup\s}:\wtil W_{AC\cup\s}^1(\Om)
\to \wtil W_{BC\cup\s}^{-1}(\Om)
\q\ {\rm and}\q\ 
\wtil T_{BC\cup\s}:\wtil W_{BC\cup\s}^1(\Om)
\to\wtil W_{AC\cup\s}^{-1}(\Om)
\eeqn
of the Tricomi operator $T$ defined on the dense subspaces 
$C_{0,AC\cup\s}^\infty(\ov\Om)$ and $C_{0,BC\cup\s}^\infty(\ov\Om)$. 
Notice that, by denoting with $(\wtil T_{AC\cup\s})^*$ and $(\wtil T_{BC\cup\s})^*$ 
the adjoint operators
of $\wtil T_{AC\cup\s}$ and $\wtil T_{BC\cup\s}$, respectively, 
from (\ref{2.6}) we deduce $(\wtil T_{AC\cup\s})^*=\wtil T_{BC\cup\s}$ and 
$(\wtil T_{BC\cup\s})^*=\wtil T_{AC\cup\s}$. This implies that the problems
\beqn\no
&&\hskip -1truecm
({\rm LT}):\
\left\{
\begin{array}{lll}
Tu=h\q{\rm in}\ \Om,
\\[2mm]
u=0\q{\rm on}\ AC\cup\s,
\end{array}
\right.
\q{\rm and}\q\ 
({\rm LT})^*:\
\left\{
\begin{array}{lll}
Tv=h\q{\rm in}\ \Om,
\\[2mm]
v=0\q{\rm on}\ BC\cup\s,
\end{array}
\right.
\eeqn
where $h\in L^2(\Om)$, are adjoint one to each other, but they are {\it not}
self-adjoint. Then, from now on, to simplify notation, 
we shall consider only the problem ({\rm LT}).
In fact, due to the adjoint character of ({\rm LT}) and $({\rm LT})^*$, 
in what follows it will suffice to replace 
the pair $(AC\cup\s,BC\cup\s)$ with $(BC\cup\s,AC\cup\s)$
in all the statements concerning problem $({\rm LT})$ for having analogous 
statements for problem $({\rm LT})^*$. 

\begin{definition}\label{def2.1}
\emph{A function $u\in\wtil W_{AC\cup\s}^1(\Om)$ 
is called a {\it generalized solution} to problem ({\rm LT})
if there exists a sequence $\{u_n\}_{n\in\nsp}\subset C_{0,AC\cup\s}^\infty(\ov\Om)$ 
such that $\|u_n-u\|_{\wtil W_{AC\cup\s}^1(\Om)}\to 0$
and $\|Tu_n-h\|_{\wtil W_{BC\cup\s}^{-1}(\Om)}\to 0$ as $n\to\infty$.}
\end{definition}
As shown first in \cite{Di}, a necessary and sufficient condition
in order to have generalized solvability of $(\rm{LT})$ for every $h\in L^2(\Om)$
is to have the continuity estimates (\ref{2.4}) and (\ref{2.5}) as well as
both the following {\it a priori} estimates, for some positive constants $c_j$, $j=3,4$:
\beqn\label{2.7}
&&\hskip -1truecm
\|u\|_{L^2(\Om)}\le c_3\|Tu\|_{\wtil W_{BC\cup\s}^{-1}(\Om)},
\q \forall\, u\in C_{0,AC\cup\s}^\infty(\ov\Om),
\\[2mm]
\label{2.8}
&&\hskip -1truecm
\|v\|_{L^2(\Om)}\le c_4\|Tv\|_{\wtil W_{AC\cup\s}^{-1}(\Om)},
\q \forall\, v\in C_{0,BC\cup\s}^\infty(\ov\Om).
\eeqn
Precisely, (\ref{2.8}) provides the existence of a generalized 
solution to problem $({\rm LT})$
whereas (\ref{2.7}) guarantees that the solution is unique. 
For this reason, we say that a Tricomi domain $\Om$ is {\it admissible}
if (\ref{2.7}) and (\ref{2.8}) hold. 
Observe also that (\ref{2.7}) and (\ref{2.8}) are in accordance
with the result in \cite{Na} (see also \cite[p. 11]{Di}) 
concerning the validity of a priori estimates for operators of mixed type. 
That is, if an inequality with a step in smoothness of two units such as  
$\|\psi\|_{\wtil W_{AC\cup\s}^1(\Om)}\le c_5\|T\psi\|_{\wtil W_{AC\cup\s}^{-1}(\Om)}$
would hold for every $\psi\in C_{0,AC\cup\s}^\infty(\ov\Om)$,
then $T$ would be elliptic in $\Om$.

The class of admissible domains includes
{\it normal} Tricomi domains whose elliptic boundary arc $\s$ is given as a graph 
$\{(x,y)\in\rsp^2:y=g(x), x\in[2x_0,0]\}$ satisfying the following hypotheses,
where $K_0$ is a positive constant:
\beqn\no
&&\hskip -1truecm
(g1):\q 
g(2x_0)=g(0)=0\q{\rm and}\q g(x)>0,\ \forall\,x\in(2x_0,0),
\\[1mm]
&&\hskip -1truecm
(g2):\q 
g'_{d}(2x_0)=\lim_{t\to 0^+}\frac{g(2x_0+t)}{t}=+\infty
\q{\rm and}\q
g'_{s}(0)=\lim_{t\to 0^-}\frac{g(t)}{t}=-\infty,\no
\\[1mm]
&&\hskip -1truecm
(g3):\q g\in C^2((2x_0,0)),\no
\\[2mm]
&&\hskip -1truecm
(g4):\q g''(x)\le -K_0,\q \forall\,x\in(2x_0,0).\no
\eeqn
We remark that condition $(g2)$ implies that $\s$ is perpendicular 
to the $x$-axis at the boundary points $A$ and $B$. 
That normal Tricomi domains are admissible 
is a consequence of the mentioned necessary condition 
proved in \cite{Di} for the existence of generalized solution 
and of the following result (see \cite[Theorem 2.3]{LP2}).
\begin{theorem}\label{thm2.2}
Let $\Om$ be a normal Tricomi domain. Then, for every $h\in L^2(\Om)$, 
there exists a unique generalized solution $u\in\wtil W_{AC\cup\s}^1(\Om)$   
to problem $({\rm LT})$.
\end{theorem}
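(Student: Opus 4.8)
The plan is to reduce the solvability assertion to the validity of the two a priori estimates (\ref{2.7}) and (\ref{2.8}) and then to obtain the latter by the classical energy--integral (multiplier) method, exploiting the geometric conditions $(g1)$--$(g4)$ defining a normal domain. First I would invoke the characterization of \cite{Di}: for every $h\in L^2(\Om)$ a generalized solution of $({\rm LT})$ exists and is unique if and only if the continuity estimates (\ref{2.4}), (\ref{2.5}) together with the a priori estimates (\ref{2.7}), (\ref{2.8}) hold. Since (\ref{2.4}) and (\ref{2.5}) are already available, the whole theorem is reduced to verifying (\ref{2.7}) and (\ref{2.8}) for a normal domain $\Om$, with (\ref{2.8}) yielding existence and (\ref{2.7}) uniqueness. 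Moreover, because $(\wtil T_{AC\cup\s})^*=\wtil T_{BC\cup\s}$, the estimate (\ref{2.7}) for $({\rm LT})$ is exactly (\ref{2.8}) for the adjoint problem $({\rm LT})^*$ with the pair $(AC\cup\s,BC\cup\s)$ replaced by $(BC\cup\s,AC\cup\s)$; since the class of normal domains is invariant under the reflection $x\mapsto 2x_0-x$, which exchanges the two characteristics $AC$ and $BC$ and preserves $T$, it suffices to prove a single estimate, say (\ref{2.8}).

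To prove (\ref{2.8}) I would argue by duality. By the very definition of the $\wtil W_{AC\cup\s}^{-1}(\Om)$-norm it is enough to produce, for each $v\in C_{0,BC\cup\s}^\infty(\ov\Om)$, a test function $\psi\in\wtil W_{AC\cup\s}^1(\Om)$ with $\|\psi\|_{\wtil W_{AC\cup\s}^1(\Om)}\le C\|v\|_{L^2(\Om)}$ and $(Tv,\psi)_2\ge c^{-1}\|v\|_{L^2(\Om)}^2$; the two bounds then combine to give $\|v\|_{L^2(\Om)}\le cC\,\|Tv\|_{\wtil W_{AC\cup\s}^{-1}(\Om)}$. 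The natural candidate is a first-order ``$abc$''-multiplier $\psi=a(x,y)v_x+b(x,y)v_y+c(x,y)v$ adapted to the dilation symmetry generated by $D=-3x\partial_x-2y\partial_y$. Integrating $(Tv,\psi)_2$ by parts through Green's identity converts it into the sum of an interior quadratic form in $(v,v_x,v_y)$ and a collection of line integrals over $AC$, $BC$ and $\s$.

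The crux, and the step I expect to be the main obstacle, is sign control. Because the principal symbol $-y\xi_x^2-\xi_y^2$ of $T$ is indefinite, the Dirichlet form $\int_\Om(y\,v_x^2+v_y^2)\d x\d y$ fails to be coercive across the parabolic line $\{y=0\}$, so the coefficients $a,b,c$ must be chosen so that the resulting interior form becomes positive definite and dominates $\|v\|_{L^2(\Om)}^2$, while the weight $|y|$ in the $\wtil W^1$-norm absorbs the degeneracy of $T$ on $\{y=0\}$. Simultaneously the boundary integrals must have the correct sign: the condition $v=0$ on $BC\cup\s$ annihilates the contributions on $\s$ and $BC$, and the perpendicularity of $\s$ to the $x$-axis provided by $(g2)$, together with the concavity bound $g''\le -K_0$ of $(g4)$, guarantees that the remaining boundary term, carried by the characteristic $AC$, is nonnegative.

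Verifying these sign conditions is precisely the content of \cite[Theorem 2.3]{LP2}, whose argument I would reproduce at this point. Once (\ref{2.8}) is established, the symmetric estimate (\ref{2.7}) follows from the reflection argument of the first paragraph, and Didenko's characterization then delivers both existence and uniqueness of the generalized solution $u\in\wtil W_{AC\cup\s}^1(\Om)$ for every $h\in L^2(\Om)$, completing the proof.
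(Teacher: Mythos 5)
The paper does not actually prove this theorem: it is imported verbatim from \cite[Theorem 2.3]{LP2}, and the surrounding text only records the logical skeleton you describe, namely Didenko's characterization reducing well-posedness to the continuity estimates (\ref{2.4})--(\ref{2.5}) together with the a priori estimates (\ref{2.7})--(\ref{2.8}). Your reduction to (\ref{2.7})--(\ref{2.8}), the duality argument for converting a multiplier identity into the $\wtil W_{AC\cup\s}^{-1}(\Om)$ bound, and the reflection $x\mapsto 2x_0-x$ (which preserves $T$ and the class of normal domains while swapping the roles of $AC$ and $BC$, so that (\ref{2.7}) for $\Om$ is (\ref{2.8}) for the reflected domain) are all sound and consistent with how this result is established in the literature.

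Two caveats. First, your claim that the Dirichlet condition $v=0$ on $BC\cup\s$ ``annihilates'' the boundary integrals over $\s$ and $BC$ is not correct as stated: vanishing of $v$ on a curve kills only the tangential derivative, so the line integrals produced by the $(a,b,c)$ multiplier reduce to quadratic expressions in the normal derivative of $v$, and these must be shown to have a favorable sign. It is exactly here that the geometry enters, and your attribution is slightly off: the hypotheses $(g2)$ and $(g4)$ concern the elliptic arc $\s$ and are used to control the $\s$-contribution (and the interior form near the parabolic endpoints $A$, $B$, which is where the weight $|y|$ in the $\wtil W^{1,2}(\Om)$-norm originates), while the $BC$ and $AC$ terms are handled using the characteristic nature of those arcs. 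Second, you ultimately defer the decisive step---the explicit choice of $(a,b,c)$ and the verification of positivity of the interior form and of the boundary signs---to \cite{LP2}, so your argument is a correct roadmap rather than a self-contained proof; since the paper itself simply cites that reference, this is an acceptable level of detail here, but you should be aware that the entire analytic content of the theorem lives in the step you have outsourced.
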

The admissibility of normal Tricomi domains allows to enlarge the class of
admissible domains and lead to the following theorem (see \cite[Theorem 2.4]{LP2}).
\begin{theorem}\label{thm2.3}
Let $\Om$ be a Tricomi domain such that:
i) $\Om$ contains a normal subdomain $\Om_0$ having boundary
$\partial\Om_0=AC\cup BC\cup\s_0$; 
ii) there exists an $\e>0$ such that the elliptic boundaries $\s$ and $\s_0$
of $\Om$ and $\Om_0$ coincide in a strip $\{(x,y)\in\rsp^2:0\le y\le \e\}$.
Then $\Om$ is admissible.
\end{theorem}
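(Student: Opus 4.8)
The plan is to derive the two a priori estimates (\ref{2.7}) and (\ref{2.8}) for $\Om$ from their counterparts on the normal subdomain $\Om_0$, whose admissibility is guaranteed by Theorem \ref{thm2.2}. The guiding observation is that, since the characteristics $AC$ and $BC$ are common to $\Om$ and $\Om_0$ and since $\s$ and $\s_0$ coincide in $\{0\le y\le\e\}$, the two domains coincide below the level $y=\e$; hence the entire ``mixed-type'' part of $\Om$ — the hyperbolic triangle together with a parabolic neighbourhood of $AB$, where the delicate behaviour of $T$ lives — is already contained in $\Om_0$. The extra region $\Om\setminus\ov{\Om_0}$ sits inside $\{y\ge\e\}$, where $T=-y\partial_x^2-\partial_y^2$ is uniformly elliptic with smooth coefficients, so on this ``elliptic cap'' only standard estimates are needed. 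By the adjoint symmetry between $(\mathrm{LT})$ and $(\mathrm{LT})^*$ recorded above, in which one simply exchanges the pair $(AC\cup\s,BC\cup\s)$ with $(BC\cup\s,AC\cup\s)$, it suffices to establish, say, (\ref{2.7}); the estimate (\ref{2.8}) then follows by the same reasoning with the roles of $A$ and $B$ interchanged.

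To prove (\ref{2.7}), fix $u\in C_{0,AC\cup\s}^\infty(\ov\Om)$ and a cutoff $\chi=\chi(y)\in C^\infty(\rsp)$ with $\chi\equiv 1$ for $y\le\e/4$ and $\chi\equiv 0$ for $y\ge\e/2$, so that the transition layer lies strictly inside the elliptic region $\{0<y<\e\}$. Because $u$ vanishes in a neighbourhood of $\s$, because $\s=\s_0$ throughout $\{0\le y\le\e\}$, and because $\chi$ kills everything above $y=\e/2$, the product $\chi u$ vanishes in a neighbourhood of the whole of $\s_0$ and of $AC$; hence $\chi u\in C_{0,AC\cup\s_0}^\infty(\ov{\Om_0})$ and the admissibility of $\Om_0$ applies:
\[
\|\chi u\|_{L^2(\Om_0)}\le c_3^{0}\,\|T(\chi u)\|_{\wtil W_{BC\cup\s_0}^{-1}(\Om_0)}.
\]
Writing $T(\chi u)=\chi\,Tu-(\chi'' u+2\chi' u_y)$, the first term is dominated by $\|Tu\|_{\wtil W_{BC\cup\s}^{-1}(\Om)}$, multiplication by the bounded function $\chi$ being continuous on the negative-norm space, while the commutator term is supported in the strip $S=\{\e/4\le y\le\e/2\}$, which lies in the uniformly elliptic region. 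Since $\chi\equiv 1$ for $y\le\e/4$, the left-hand side controls $\|u\|_{L^2(\Om\cap\{y\le\e/4\})}$, i.e. the whole mixed-type part of $\Om$.

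It remains to bound $u$ on the cap $\Om^+=\Om\cap\{y>\e/4\}$ and to estimate the commutator term on $S$. On $\Om^+$ the operator $T$ is uniformly elliptic and $u$ vanishes on the part of $\partial\Om^+$ lying on $\s$; therefore a Poincar\'e inequality together with a G\aa rding-type coercivity estimate for $T$ yields $\|u\|_{\wtil W^{1}(\Om^+)}$ — and in particular $\|u\|_{L^2(S)}+\|u_y\|_{L^2(S)}$ — in terms of $\|Tu\|_{\wtil W^{-1}(\Om^+)}$ and of the trace of $u$ on the interface $\ell=\{y=\e/4\}\cap\Om$. Inserting this into the bound for the commutator and combining with the lower estimate coming from $\Om_0$, all the strip contributions are confined to the elliptic overlap and can be absorbed into the left-hand side, provided the transition layer of $\chi$ is taken thin enough; this produces the desired inequality $\|u\|_{L^2(\Om)}\le c_3\|Tu\|_{\wtil W_{BC\cup\s}^{-1}(\Om)}$.

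The step I expect to be the main obstacle is precisely this reconciliation of the two regions. The norms $\wtil W^{-1}$ are nonlocal, being defined by duality against $\wtil W^{1}$-functions, so the restriction from $\Om$ to $\Om_0$ and the localization by $\chi$ must be justified at the level of the dual pairing rather than pointwise; moreover the interface coupling is genuinely two-sided — the cap estimate requires the trace of $u$ on $\ell$, whereas the $\Om_0$ estimate controls $u$ only below $\ell$ — and must be closed by an absorption argument exploiting that the entire overlap sits where $T$ is elliptic and hence obeys full-strength a priori bounds. Once this bookkeeping is arranged, the hyperbolic and parabolic difficulties never reappear, since they have been quarantined inside the admissible subdomain $\Om_0$.
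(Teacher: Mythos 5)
You should first be aware that the paper does not prove this statement: Theorem \ref{thm2.3} is quoted from \cite[Theorem 2.4]{LP2}, so there is no internal proof to compare yours against. Judged on its own terms, your localization strategy contains several sound ingredients --- the observation that $\Om$ and $\Om_0$ coincide below $y=\e$, the verification that $\chi u\in C^\infty_{0,AC\cup\s_0}(\ov{\Om_0})$, the commutator identity $T(\chi u)=\chi\,Tu-\chi''u-2\chi'u_y$, and the reduction to (\ref{2.7}) alone by the adjoint symmetry --- but the final ``absorption'' step is a genuine gap, and it is not a technicality: it is the whole difficulty of the theorem.

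The problem is a circularity with no small parameter. Your $\Om_0$-estimate controls $\|u\|_{L^2(\Om\cap\{y\le\e/4\})}$ only up to the commutator contribution from the transition strip, with a constant of order $\|\chi'\|_\infty+\|\chi''\|_\infty\sim\e^{-2}$; your cap estimate controls $u$ on $\{y\ge\e/4\}$ only up to data on the interface $\ell$ (equivalently, up to $\|u\|_{L^2}$ in a band at or below $\ell$), again with a constant that is at best $O(1)$. Chaining the two inequalities therefore returns an estimate of the form $\|u\|_{L^2(\Om\cap\{y\le\e/4\})}\le C\|Tu\|_{\wtil W_{BC\cup\s}^{-1}(\Om)}+C'\|u\|_{L^2(\Om\cap\{y\le\e/4\})}$ with $C'$ not small, and nothing can be absorbed. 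Your proposed remedy --- taking the transition layer of $\chi$ ``thin enough'' --- goes in the wrong direction: thinning the layer inflates $\chi'$ and $\chi''$ and hence the very constants you need to be small; thickening it does not help either, since the constant $c_3^0$ of $\Om_0$ and the elliptic constants on the cap are fixed. Nor can the cap estimate be decoupled from the interface: the elliptic problem on $\Om^+$ with zero data on $\s$ but unknown data on $\ell$ is not coercive without that data (the energy identity on $\Om^+$ produces the uncontrolled term $\int_\ell uu_y\d x$), and the interface data is exactly what the $\Om_0$-estimate fails to supply, since it yields only an $L^2$ bound below $\ell$ and no trace or derivative control on $\ell$. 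This two-sided coupling is precisely why admissibility of the larger domain is a theorem rather than a patching exercise; the known arguments introduce a genuinely global ingredient --- either an $(a,b,c)$-multiplier built on all of $\Om$ at once, with the normal subdomain serving only to secure the geometry near the parabolic points $A$ and $B$, or a compactness--uniqueness argument in which absorption is replaced by a separate uniqueness theorem for $T$ on $\Om$. Your proposal supplies neither, so as written it does not close.
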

We stress that (see \cite{Di} and \cite{LP1}), 
for Tricomi domains in which $\s$ forms acute angles
with the parabolic segment $AB$, the previous solvability theory 
can be developed with the pair
$(\wtil W_{AC\cup\s}^1(\Om),\wtil W_{BC\cup\s}^1(\Om))$
being replaced by $(W_{AC\cup\s}^1(\Om),W_{BC\cup\s}^1(\Om))$, 
where $W_\G^1(\Om)$, $\G\in\{AC\cup\s,BC\cup\s\}$,
is defined as the closure with respect 
to the usual $W^{1,2}(\Om)$-norm of the space 
$C^{\infty}_\G(\ov\Om)=\{\psi\in C^\infty(\ov\Om):\psi\equiv 0\ {\rm on}\ \G\}$.
On the contrary (see \cite[p. 445]{LP2}),
when dealing with normal Tricomi domain
the weight $|y|$ in the $\wtil W^{1,2}(\Om)$-norm appears naturally
and describes the possible lack of square integrability of the partial
derivative with respect to $x$ of the solutions in a neighborhood of $A$ and $B$.

Theorem \ref{2.2} implies the existence 
of a continuous right inverse $\wtil T_{AC\cup\s}^{-1}$ 
from all of $L^2(\Om)$ onto a dense proper subspace 
of $\wtil W_{AC\cup\s}^1(\Om)$, and such that 
the generalized solution is exactly $u=\wtil T_{AC\cup\s}^{-1}h$. 
Then, using Rellich's lemma, this continuous
right inverse give rise to an injective, non surjective and compact operator
from $L^2(\Om)$ to $L^2(\Om)$ which we denote again by
$\wtil T_{AC\cup\s}^{-1}$.
It is just such a compactness of the inverse operator that permits the 
possibility of studying the generalized solvability of the spectral problem
 \beqn\no
&&\hskip -2truecm
({\rm LTE}):\
\left\{
\begin{array}{lll}
Tu=\l u\q{\rm in}\ \Om,
\\[2mm]
u=0\q{\rm on}\ AC\cup\s,
\end{array}
\right.
\eeqn
where $\l\in\csp$. Indeed, the compactness of $\wtil T_{AC\cup\s}^{-1}$
combined with a maximum principle for the Tricomi problem 
established in \cite{LP2} exploiting a slight variant of that in \cite{ANP},
yields the following Theorem \ref{thm2.4} which is proved in \cite{LP3}.
We mention that Theorem \ref{thm2.4} was already announced in \cite{Ga}, 
but (see \cite[p. 536]{LP2}) that paper presented two major problems 
to which the proof in \cite{LP3} supplies a remedy. 
\begin{theorem}\label{thm2.4}
Let $\Om$ be normal Tricomi domain. Then there exists an 
eigenvalue-eigenfunction pair $(\l_0,u_0)$ such that $0<\l_0\le|\l|$ 
for every $\l$ in the spectrum $\s(\wtil T_{AC\cup\s})$ of $\wtil T_{AC\cup\s}$
and $u_0\in\wtil W_{AC\cup\s}^1(\Om)$ satisfies $u_0\ge 0$ 
almost everywhere in $\Om$.
\end{theorem}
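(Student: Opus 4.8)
The plan is to recast the spectral problem $({\rm LTE})$ as an eigenvalue problem for the compact resolvent $K:=\wtil T_{AC\cup\s}^{-1}$ on $L^2(\Om)$ and to apply a Krein--Rutman-type theorem. First I would endow $L^2(\Om)$ with the order induced by the cone $P=\{w\in L^2(\Om):w\ge 0\ \text{a.e. in }\Om\}$, which is closed, convex and reproducing. The essential input is the maximum principle for the Tricomi problem proved in \cite{LP2}: if $h\in L^2(\Om)$ satisfies $h\ge 0$ a.e., then the generalized solution $u=Kh$ of $({\rm LT})$ also satisfies $u\ge 0$ a.e. Hence $K(P)\subseteq P$, i.e. $K$ is a positive operator for $P$.

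Next I would establish that the spectral radius $r(K)$ is strictly positive, which is the crux of the argument. Since $P$ has empty interior in $L^2(\Om)$, the classical interior-point version of the Krein--Rutman theorem does not apply, so I would instead invoke its extension to irreducible positive operators (de Pagter's theorem): a compact, positive and irreducible operator on a Banach lattice has strictly positive spectral radius, and $r(K)$ is then an eigenvalue of $K$ possessing an eigenvector $u_0\in P$. The irreducibility of $K$ is exactly where the strong form of the maximum principle of \cite{LP2} is used: for $0\le h\not\equiv 0$ the solution $Kh$ is a.e. strictly positive in $\Om$, so $K$ leaves invariant no nontrivial closed order ideal.

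It then remains to transfer the conclusion back to $\wtil T_{AC\cup\s}$. Because $K$ is compact, its nonzero spectrum consists of eigenvalues, and $Ku=\mu u$ is equivalent to $Tu=\l u$ with $\l=1/\mu$; moreover $u_0\in\wtil W_{AC\cup\s}^1(\Om)$ since $K$ maps $L^2(\Om)$ into that space. Setting $\l_0:=1/r(K)>0$, the identity $r(K)=\max\{|\mu|:\mu\in\s(K)\setminus\{0\}\}$ translates into $\l_0=\min\{|\l|:\l\in\s(\wtil T_{AC\cup\s})\}$, whence $0<\l_0\le|\l|$ for every $\l\in\s(\wtil T_{AC\cup\s})$, with the nonnegative eigenfunction $u_0$ furnished by the previous step.

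I expect the genuine difficulty to lie in the positivity input rather than in the abstract functional analysis. One must extract from the Tricomi maximum principle not merely the invariance $K(P)\subseteq P$ but the sharper, irreducibility-type statement that $Kh$ is a.e. strictly positive whenever $0\le h\not\equiv 0$. This is delicate precisely because $T$ changes type across $\Om$---elliptic for $y>0$, parabolic on $AB$, hyperbolic for $y<0$---so the strict positivity of the support of $Kh$ has to be propagated through the parabolic segment $AB$ and along the characteristics $AC$, $BC$ as well as in the elliptic region.
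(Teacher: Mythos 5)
The paper does not reprove Theorem 2.4; it attributes it to \cite{LP3} and records only the two ingredients of that proof: the compactness of $\wtil T_{AC\cup\s}^{-1}$ on $L^2(\Om)$ and the maximum principle of \cite{LP2}. Your overall skeleton --- pass to the compact positive resolvent $K=\wtil T_{AC\cup\s}^{-1}$, invoke a Krein--Rutman-type theorem, and translate $r(K)$ back into $\l_0=1/r(K)$ --- is exactly that route, and the bookkeeping in your last paragraph (nonzero spectrum of a compact operator consists of eigenvalues, $Ku=\mu u\iff Tu=\mu^{-1}u$, $K$ maps $L^2(\Om)$ into $\wtil W_{AC\cup\s}^1(\Om)$) is fine.

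The genuine gap is the step where you obtain $r(K)>0$. You propose de Pagter's theorem, whose hypothesis is irreducibility of $K$, and you ground irreducibility in a \emph{strong} maximum principle: $Kh>0$ a.e.\ whenever $0\le h\not\equiv 0$. No such statement is available for the Tricomi problem. The maximum principle of \cite{LP2} that the paper invokes gives only the weak positivity $h\ge 0\Rightarrow Kh\ge 0$ a.e., i.e.\ $K(P)\subseteq P$; it does not propagate strict positivity across the parabolic segment and into the hyperbolic region. In fact the paper itself tells you this route cannot be carried out as stated: it remarks after Theorem 2.4 that it is not known whether the principal eigenspace is simple nor whether other eigenspaces contain a.e.\ nonnegative eigenfunctions. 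If $K$ were band-irreducible, the Perron--Frobenius theory for irreducible compact positive operators on Banach lattices would force the eigenspace of $r(K)$ to be one-dimensional and spanned by a quasi-interior positive element, settling precisely the questions the paper declares open. So your irreducibility hypothesis is not merely unproved --- it is strictly stronger than the current state of knowledge for this operator. The standard repair, and what the argument of \cite{LP3} actually relies on, is the version of the Krein--Rutman theorem for a compact positive operator on a space ordered by a total (reproducing) cone in which one must exhibit a single $w\in P\setminus\{0\}$ and a constant $c>0$ with $Kw-cw\in P$; this yields $r(K)\ge c>0$ and an eigenvector of $r(K)$ in $P$, using only the weak maximum principle plus a comparison argument, and it deliberately delivers the weaker conclusion $u_0\ge 0$ a.e.\ rather than strict positivity. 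As written, your proof would need either to supply that comparison step or to prove a strong maximum principle that is not known to hold.
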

Note that, since the eigenvalues of $\wtil T_{AC\cup\s}$ are the inverse
of those of $\wtil T_{AC\cup\s}^{-1}$, 
the compactness of $\wtil T_{AC\cup\s}^{-1}$ implies that $\wtil T_{AC\cup\s}$ 
has a discrete spectrum composed entirely of eigenvalues of finite multiplicity 
with a unique accumulation point at infinity. 
The eigenvalue $\l_0$ of Theorem \ref{thm2.4} is called
a {\it principal eigenvalue} due to the positivity of 
the associated eigenfunction $u_0$ and its being of minimum modulus. 
However, at present, it is not known neither whether if
the associated eigenspace is simple, nor whether if other eigenspaces do not contain
eigenfunctions that are nonnegative almost everywhere, as it happens 
in the purely elliptic case. Nevertheless, what is known is that all real eigenvalues 
of $\wtil T_{AC\cup\s}$ must be positive. This spectral information is the
content of \cite[Theorem 2.5(a)]{LP4}, and, according to Theorem \ref{thm2.4},
may be summarized as
\beqn\label{2.9}
&&\hskip -1truecm
\s(\wtil T_{AC\cup\s})\cap(-\infty,\l_0)=\emptyset.
\eeqn
To the author's knowledge, (\ref{2.9}) is the best information on the spectrum
of $\wtil T_{AC\cup\s}$ compatible with the solvability theory in the
space $\wtil W_{AC\cup\s}^1(\Om)$. 
Indeed, the results in \cite{Mo1} and \cite{Mo2}, which establish that 
$\s(\wtil T_{AC\cup\s})\cap\{\l\in\csp: 2\pi/3\le|\arg\l|\le4\pi/3\}=\emptyset$,
require that the eigenfunctions should be at least of class 
$C(\ov\Om)\cap C^1(\Om)\cap C^2(\Om_+)\cap C^2(\Om_-)$, 
where $\Om_\pm=\{(x,y)\in\Om:\pm y>0\}$.
Unfortunately, the question of regularity of the eigenfunctions is still an open question, but, anyhow, one can show the existence of a continuous eigenfunction.
More precisely, using the solvability result in \cite[p. 64]{Ag} for normal domains,
in \cite{LP4} it is shown the following theorem.
\begin{theorem}\label{thm2.5}
Let $\Om$ be a normal Tricomi domain and let $\l_0$ be the positive
eigenvalue of {\rm Theorem \ref{thm2.4}}. 
Then there exists an eigenvalue-eigenfunction pair $(\wtil\l_0,\wtil u_0)$ 
such that $\wtil\l_0\ge\l_0$ and 
$\wtil u_0\in\wtil W_{AC\cup\s}^1(\Om)\cap C(\ov\Om)$
satisfies $\wtil u_0\ge 0$ in $\ov\Om$.
\end{theorem}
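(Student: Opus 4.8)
The plan is to realize $\wtil u_0$ as a nonnegative eigenvector of the solution operator acting on the Banach space $C(\ov\Om)$, and then to invoke the Krein--Rutman theorem in its basic form. The natural starting point is the compact, injective, positive inverse $G:=\wtil T_{AC\cup\s}^{-1}$ on $L^2(\Om)$, whose existence follows from Theorem \ref{thm2.2} together with Rellich's lemma, and whose eigenvalues are precisely the reciprocals of those of $\wtil T_{AC\cup\s}$. The role of the solvability result of \cite[p.~64]{Ag} for normal domains is to upgrade the range of $G$: for data $h\in C(\ov\Om)\subset L^2(\Om)$ the generalized solution of $({\rm LT})$ is itself continuous up to $\ov\Om$, so that $G$ restricts to a bounded linear operator $G:C(\ov\Om)\to C(\ov\Om)$. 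This is the step where I would lean most heavily on the cited external result, since it is what makes the problem accessible in the uniform topology.

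With this framework in place, I would verify the three hypotheses of Krein--Rutman for $G$ on $C(\ov\Om)$. First, positivity: by the maximum principle for the Tricomi problem established in \cite{LP2} (via the variant of \cite{ANP}), $h\ge 0$ on $\ov\Om$ forces $Gh\ge 0$, so $G$ maps the total cone $K=\{w\in C(\ov\Om):w\ge 0\}$ into itself. Second, compactness on $C(\ov\Om)$: this I would deduce either by factoring through the compactness of $G$ on $L^2(\Om)$ together with the continuity estimate from \cite{Ag}, or directly from an equicontinuity bound for $Gh$ and the Arzel\`a--Ascoli theorem. Third, strict positivity of the spectral radius: since $\l_0\in\s(\wtil T_{AC\cup\s})$ by Theorem \ref{thm2.4}, the number $\l_0^{-1}>0$ is an eigenvalue of $G$ with a nonzero eigenfunction, whence $r(G)>0$.

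The basic Krein--Rutman theorem then yields a number $r=r(G)>0$ that is an eigenvalue of $G$ on $C(\ov\Om)$ with an eigenvector $\wtil u_0\in K$, so $\wtil u_0\ge 0$ in $\ov\Om$ and $\wtil u_0\ne 0$. Setting $\wtil\l_0:=1/r$, the relation $G\wtil u_0=r\,\wtil u_0$ rewrites as $T\wtil u_0=\wtil\l_0\,\wtil u_0$, so $(\wtil\l_0,\wtil u_0)$ is an eigenvalue--eigenfunction pair of $\wtil T_{AC\cup\s}$; moreover $\wtil u_0=\wtil\l_0\,G\wtil u_0$ lies in the range of $G$, hence in $\wtil W_{AC\cup\s}^1(\Om)$, giving the asserted regularity $\wtil u_0\in\wtil W_{AC\cup\s}^1(\Om)\cap C(\ov\Om)$. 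Finally, because $\wtil u_0$ is real and nonzero, $\wtil\l_0$ is a \emph{real} eigenvalue of $\wtil T_{AC\cup\s}$, so the spectral gap (\ref{2.9}) forces $\wtil\l_0\ge\l_0$, which is exactly the claimed inequality.

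The main obstacle I anticipate is that the eigenfunctions vanish on the arc $AC\cup\s$, so $\wtil u_0$ cannot lie in the interior of the cone $K$; this rules out the strong form of Krein--Rutman and is why one must argue with the basic version and establish $r(G)>0$ separately (as above) rather than read off strict positivity from an interior-point hypothesis. It is also for this reason that the conclusion is only $\wtil\l_0\ge\l_0$ rather than $\wtil\l_0=\l_0$: the spectral radius of $G$ on $C(\ov\Om)$ need not coincide with that on $L^2(\Om)$, and all that (\ref{2.9}) guarantees is that the resulting real eigenvalue is bounded below by the principal value $\l_0$. A secondary technical point requiring care is the compactness of $G$ on $C(\ov\Om)$, whose cleanest justification depends on the precise regularity output of \cite{Ag} for normal domains.
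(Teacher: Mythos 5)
First, a point of comparison: the paper does not prove Theorem \ref{thm2.5} at all --- it is quoted from \cite{LP4}, with the only hint being that the solvability result of \cite[p.~64]{Ag} for normal domains is the key input. So there is no in-paper argument to measure you against; your Krein--Rutman reconstruction on $C(\ov\Om)$ is the natural route and almost certainly the one taken in \cite{LP4}. Your identification of the role of \cite{Ag} (upgrading the range of $G=\wtil T_{AC\cup\s}^{-1}$ on continuous data), the use of the maximum principle of \cite{LP2} for positivity, the observation that $\wtil u_0=\wtil\l_0 G\wtil u_0$ forces $\wtil u_0\in\wtil W^1_{AC\cup\s}(\Om)$, and the final step deducing $\wtil\l_0\ge\l_0$ from (\ref{2.9}) because $\wtil\l_0$ is a \emph{real} eigenvalue, are all sound.

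There is, however, one genuine gap, and it sits at the crux of the Krein--Rutman application: the claim that $r(G)>0$ for $G$ acting on $C(\ov\Om)$. You justify it by saying that $\l_0^{-1}$ is an eigenvalue of $G$ ``with a nonzero eigenfunction''; but that eigenfunction is the $u_0\in\wtil W^1_{AC\cup\s}(\Om)$ of Theorem \ref{thm2.4}, which is \emph{not} known to be continuous --- indeed, producing a continuous eigenfunction is precisely what Theorem \ref{thm2.5} is for. An eigenvalue of $G$ on $L^2(\Om)$ only bounds $r(G|_{C(\ov\Om)})$ from below if its eigenvector lies in $C(\ov\Om)$, and since $G$ is not known to map $L^2(\Om)$ into $C(\ov\Om)$, one only gets the inclusion $\s(G|_{C(\ov\Om)})\setminus\{0\}\subseteq\s(G|_{L^2(\Om)})\setminus\{0\}$, i.e.\ the wrong-way inequality $r(G|_{C(\ov\Om)})\le r(G|_{L^2(\Om)})$. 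Your own closing remark --- that the two spectral radii ``need not coincide'' --- contradicts the way you used them in the second paragraph. Since the cone of nonnegative continuous functions vanishing on $AC\cup\s$ has empty interior, the total-cone version of Krein--Rutman you invoke gives nothing at all when $r(G)=0$ (a compact quasinilpotent positive operator is not excluded by positivity and compactness alone). To close the gap one must lower-bound $r(G|_{C(\ov\Om)})$ by a genuinely $C(\ov\Om)$-intrinsic argument, e.g.\ exhibiting $0\le h\in C(\ov\Om)$, $h\not\equiv 0$, and $\e>0$ with $Gh\ge\e h$ pointwise (which requires a quantitative form of the maximum principle), or a smoothing/approximation argument transferring the $L^2$ spectral information to $C(\ov\Om)$; neither is supplied. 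The compactness of $G$ on $C(\ov\Om)$, which you flag yourself, is a second (smaller) debt to the precise output of \cite{Ag}.
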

\section{$D$-star-shaped domains and Poho\v zaev identity}\label{Sec3}
\setcounter{equation}{0}
In Section \ref{Sec2} we have defined the Tricomi domains so that the boundary 
points $A$ and $B$ coincide, respectively, with $(2x_0,0)$ and $(0,0)$, 
where $x_0<0$. Such a choice is made only in order to uniform our notation 
with that of \cite{LP5}, whose results we shall need later. 
Indeed, due to the invariance  of the Tricomi operator (\ref{2.1}) with respect to translations along the $x$ axis, any other choices for $A$ and $B$ 
could be possible. To this purpose, it suffices to observe that 
if $u\in C^2(\Om)$ solves one of the problems ({\rm LT}) and ({\rm LTE}) 
in $\Om$,  then, by setting $x^*=x-l$, $y^*=y$, $l\in\rsp$, 
the function $\wtil u(x^*,y^*)=u(x^*+l,y^*)$ solves the corresponding problem
in the relevant translate $\wtil\Om$ of $\Om$. 

As noticed in \cite{LP6} (take there $m=N=1$ in the equation
$y|y|^{m-1}\sum_{i=1}^Nu_{x_ix_i}+u_{yy}+f(u)=0$), 
translations in the $x$ variables are the easiest
symmetries that generate conservation laws associated to the semilinear problem
\beqn\label{3.1}
&&\hskip -2truecm
\left\{
\begin{array}{lll}
Tu=f(u)\q {\rm in}\ \Om,
\\[2mm]
u=0\q{\rm on}\ AC\cup\s,
\end{array}
\right.
\eeqn
where $f\in C(\rsp)$. Recall that a conservation law associated to (\ref{3.1})
is a first-order equation in divergence form $\div(U)=0$ which must be
satisfied by every sufficiently regular solution of the given problem, where
$U=U(x,y,u,\nabla u,f)$ is some vector field whose dependence on $u$ is, 
in general, highly nonlinear. 
Apart from translations, other two symmetry groups 
that generate conservation laws for problem (\ref{3.1}) are exhibited in \cite{LP6}, 
i. e. those coming from certain anisotropic dilations and 
from inversion with respect to the curve 
\beqn\label{3.2}
&&\hskip -1truecm
9(x-x_0)^2+4y^3=9x_0^2,\q y\ge 0.
\eeqn
According to \cite[Chapter IV]{Tr},  
the curve in (\ref{3.2}) which joins the boundary points 
$A$ and $B$ in the elliptic region,
is called the {\it normal curve} for the Tricomi operator.
In particular, from (\ref{3.2}) we get 
\beqn\label{3.3}
&&\hskip 0,5truecm
y=\big[9x(2x_0-x)/4\big]^{1/3}=:g_{T}(x),\q x\in[2x_0,0].
\eeqn
Hence, a standard exercise of calculus shows that 
the function $g_T$ in (\ref{3.3}) satisfies all the conditions 
$(g1)$--$(g4)$ of Section \ref{Sec2} with 
\beqn\label{3.4}
&&\hskip -1truecm
K_0=-g_T''(x_0)=\big(3x_0^4/2\big)^{-1/3}.
\eeqn
Since we do not need inversions in this paper, we only refer to
\cite{GB} for their construction and their application to (\ref{3.1})
with $f=0$, and to \cite{LP6} for how to use inversions to derive
conservation laws for (\ref{3.1}) in both the cases $f=0$ 
and $f(u)=u^9$, the exponent $\ov\a=9$ 
corresponding to the critical exponent obtained in \cite{LP5}.
Here, instead, we focus our attention to the second group of symmetries,
which leads to the concept of $D$-{\it star-shaped domain}
and is strongly related to the Poho\v zaev identity that we shall recall later.

Let $\g>0$ and consider the change of variable
\beqn\no
&&\hskip -0truecm
(x,y)\in\Om\to(\g^3x,\g^2y)=:(x^*,y^*)\in\Om^*.
\eeqn
It is easy to verify that if $u\in C^2(\Om)$ is a solution 
of problem (\ref{3.1}) with $f=0$,  then, for every
fixed $\dl\ge 0$, the scaled function (see \cite[p. 256]{LP6})
\beqn\label{3.5}
&&\hskip -1truecm
u_{\g}(x^*,y^*)=\g^{-\dl}u(\g^{-3}x^*,\g^{-2}y^*),
\eeqn
solves the same problem in the scaled domain $\Om^*$ of $\Om$. 
Thus, we have a multiplicative group $\rsp_+$ of anisotropic dilations 
as a symmetry group for the linear homogeneous problem (\ref{3.1}). 
For instance, such a dilation invariance has been applied in 
\cite{BN1}--\cite{BN3} to the search of fundamental solutions 
for the Tricomi operator.
In the general case, the semilinear problem (\ref{3.1}) does not have 
this symmetry group of dilations, but a straightforward computation 
shows that this is true for power type nonlinearities, 
provided $\dl$ is suitably chosen in (\ref{3.3}).
That is, if $f(u)=Cu^\a$ with $C\in\rsp$ and $\a>1$,
then problem (\ref{3.1}) has the property of dilations invariance
for $\dl=4(\a-1)^{-1}$.
However, it is worth to remark that in the case $f(u)=\l u$, 
corresponding to problem ({\rm LTE}), there is no way to choose $\dl\ge 0$ in (\ref{3.3}) such that the dilation invariance is satisfied.

The first variation of the one-parameter family of scaled functions (\ref{3.5})
under the action of the one-parameter group of dilation is
\beqn\no
&&\hskip -1truecm
\Big[\frac{\d}{\d\g}u_\g(x^*,y^*)\Big]\Big|_{\g=1}=Du(x,y)-\dl u(x,y),
\eeqn
where $D$ is the vector field
\beqn\label{3.6}
&&\hskip -1truecm
D=-3x\partial_x-2y\partial_y.
\eeqn
This vector field determines a flow ${\cal F}_t:\rsp^2\to\rsp^2$, $t\in\rsp$,
such that ${\cal F}_t(\ov x,\ov y)=\phi_{(\ov x,\ov y)}(t)$, where,
denoting by $B^T$ the transpose of a $p\times q$ matrix $B$, 
$\phi_{(\ov x,\ov y)}^T(t)$ is the unique integral curve of the linear system
\beqn\label{3.7}
&&\hskip 0truecm
\left\{\!
\begin{array}{lll}
V'(t)= AV(t),
\\[2mm]
V(0)=(\ov x,\ov y)^T,
\end{array}
\right.
\q
V(t)=\bigg(
\begin{matrix}
      x(t)\\
      y(t)\\
\end{matrix}
\bigg),
\q
A=\bigg(
\begin{matrix}
      -3 & 0 \\
       0 & -2 \\
\end{matrix}
\bigg).
\eeqn
Therefore, for every $(t,\ov x,\ov y)\in\rsp^3$,
we have ${\cal F}_t(\ov x,\ov y)=(\ov x\ee^{-3t},\ov y\ee^{-2t})$.
\begin{definition}\label{def3.1}
\emph{Let $D$ be defined by (\ref{3.6}). 
An open set $G\subset\rsp^2$ is said to be 
{\it $D$--star-shaped} if for each $(\ov x,\ov y)\in\ov G$
one has ${\cal F}_t(\ov x,\ov y)\subset\ov G$ for every $t\in[0,+\infty]$,
where ${\cal F}_{+\infty}(\ov x,\ov y)=\lim_{t\to+\infty}{\cal F}_t(\ov x,\ov y)=(0,0)$.}
\end{definition}
To make clear the importance of this definition, we recall that if 
$\Om$ is a normal Tricomi domain which is also $D$-star-shaped  
then the continuous and compact embedding 
$\wtil W_{AC\cup\s}^1(\Om)\hookto L^p(\Om)$ holds for every $p\in[1,p^*)$,
where $p^*=2N(N-2)^{-1}=10$, $N=5/2$. Here, $N=5/2$ is the so-called 
homogeneous dimension of $\rsp^2$ when equipped with a 
non-Euclidian metric $d$ which is natural for the Tricomi operator 
as the Euclidian metric is natural for the Laplace operator
(see \cite{FL1}, \cite{FL2}, \cite{LP5} and \cite{LP6}).

Bounded $D$-star-shaped domains have $D$-starlike boundaries 
as established by the following lemma (see \cite[Lemma 2.2]{LP5}). 
From now on, $\langle\cdot,\cdot\rangle$ will always denote
the canonical inner product of $\rsp^2$.
\begin{lemma}\label{lem3.2}
Let $G\subset\rsp^2$ be an open set with piecewise $C^1$ boundary 
$\partial G$. If $G$ is $D$-star-shaped, then $\partial G$ is $D$-starlike 
in the sense that $\langle(-3x,-2y),\vec{\bf n}(x,y)\rangle\le 0$
at each regular point $(x,y)\in\partial G$ where 
$\vec{\bf n}(x,y)$ is the unit outer normal to $\partial G$ at the point $(x,y)$.
\end{lemma}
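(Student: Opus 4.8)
The plan is to connect the flow-invariance hypothesis of $D$-star-shapedness directly to the sign of the inner product $\langle(-3x,-2y),\vec{\bf n}(x,y)\rangle$ at a regular boundary point. Fix a regular point $(x,y)\in\partial G$, so that $\vec{\bf n}(x,y)$ is well defined, and recall from the discussion preceding Definition \ref{def3.1} that the vector field $D=-3x\partial_x-2y\partial_y$ generates the flow ${\cal F}_t(\ov x,\ov y)=(\ov x\,\ee^{-3t},\ov y\,\ee^{-2t})$. The key observation is that the vector $(-3x,-2y)$ appearing in the lemma is precisely the value of the infinitesimal generator of this flow at $(x,y)$; indeed, differentiating $t\mapsto{\cal F}_t(x,y)$ at $t=0$ gives $(-3x,-2y)$. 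So the quantity to be controlled is the normal component of the flow's velocity at the boundary.

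First I would make this rigorous by considering the integral curve $\phi(t)={\cal F}_t(x,y)$ emanating from the boundary point. By the $D$-star-shapedness hypothesis (Definition \ref{def3.1}), since $(x,y)\in\ov G$, we have $\phi(t)\in\ov G$ for all $t\in[0,+\infty)$. Thus the curve starts on $\partial G$ at $t=0$ and immediately moves into $\ov G$ as $t$ increases. Intuitively, a curve leaving a boundary point into the closure of the set cannot have positive outward normal velocity, which will force $\langle\phi'(0),\vec{\bf n}\rangle\le 0$, i.e. $\langle(-3x,-2y),\vec{\bf n}(x,y)\rangle\le 0$. To turn this intuition into a proof I would argue by contradiction: suppose $\langle(-3x,-2y),\vec{\bf n}(x,y)\rangle>0$. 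Using the $C^1$ regularity of $\partial G$ near the regular point, represent $\partial G$ locally as a graph (after rotating so that $\vec{\bf n}$ points in a coordinate direction), and write the signed distance to $\partial G$, with the convention that it is negative inside $G$. The derivative of this signed-distance function along $\phi$ at $t=0$ equals $\langle\phi'(0),\vec{\bf n}\rangle>0$, so for small $t>0$ the point $\phi(t)$ would have strictly positive signed distance, placing it outside $\ov G$. This contradicts $\phi(t)\in\ov G$, establishing the inequality.

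The step I expect to be the main obstacle is making the last argument airtight under only piecewise $C^1$ regularity of $\partial G$, since the signed-distance function and local graph representation are genuinely smooth only where the boundary is $C^1$. However, the lemma restricts attention to regular points $(x,y)$, where by definition a well-defined unit outer normal and a local $C^1$ graph exist, so on a suitable neighborhood of such a point the signed-distance function is $C^1$ and its gradient at $(x,y)$ is $\vec{\bf n}(x,y)$; this is exactly what the contradiction argument needs. A clean alternative that avoids signed distance altogether is to use the local graph directly: parametrize $\partial G$ near $(x,y)$ and the nearby region of $G$ by the graph of a $C^1$ function, express the condition $\phi(t)\in\ov G$ as an inequality on the graph coordinate, and differentiate that inequality at $t=0$; the resulting first-order condition is again $\langle(-3x,-2y),\vec{\bf n}(x,y)\rangle\le 0$. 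Either route gives the $D$-starlikeness of $\partial G$ as claimed.
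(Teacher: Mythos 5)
Your argument is correct and is essentially the expected one (note that the paper does not actually prove Lemma \ref{lem3.2} but quotes it from \cite[Lemma 2.2]{LP5}, where the same first-order flow-invariance reasoning is used): the vector $(-3x,-2y)$ is the initial velocity $\phi'(0)$ of the integral curve $\phi(t)={\cal F}_t(x,y)$, which must remain in $\ov G$ for all $t\ge 0$, so its outward normal component at a regular boundary point cannot be positive. Your caveat about the signed-distance function is well taken --- for a merely $C^1$ boundary the signed distance need not be differentiable off $\partial G$ --- but the local-graph alternative you describe (write $G$ locally as $\{y'<\gamma(x')\}$ with $\gamma\in C^1$, set $\psi(t)=\phi_2(t)-\gamma(\phi_1(t))$ in the rotated coordinates, and use $\psi(0)=0$, $\psi(t)\le 0$ for small $t>0$ to get $\psi'(0)\le 0$) closes that gap cleanly, so the proof is complete as written.
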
 
The notion of $D$-star-shaped domains has been used in \cite{LP5} to 
prove the nonexistence of nontrivial regular solutions 
to problem (\ref{3.1}) in the case $f(u)=|u|^\a$ with $\a>p^*-1$,
thus showing that the homogeneous dimension of $\rsp^2$ is
responsible for a critical-exponent phenomenon in the nonlinearity.
In the quoted paper, the key tool is to combine the $D$-star-shapedness of $\Om$ 
with the following Poho\v zaev-type identity that we recall 
for the reader's convenience, by referring to \cite{LP5} for its proof.
\begin{theorem}\label{thm3.3}
Let $\Om$ be a Tricomi domain for $T$
and let $D$ be the vector field defined by $(\ref{3.6})$.
Let $u$ be a solution of problem $(\ref{3.1})$
such that $u_y$, $xu_x$, $yu_x\in C^1(\ov\Om)$
and $xu\in C^2(\Om)$. Then the following identity holds true
\beqn\label{3.8}
&&
\hskip -1,5truecm
\int_{\Om}\big[10F(u)-uf(u)\big]\d x\d y
=\int_{BC}(\om_1+\om_2)\d s+\int_{\s}\om_1\d s.
\eeqn
Here $F$ is a primitive of $f\in C^0(\rsp)$ such that $F(0)=0$, 
whereas $\om_1$ and $\om_2$ are defined by
\beqn\label{3.9}
&&\hskip -1,5truecm
\om_1=\langle2Du(-yu_x,-u_y)+(yu_x^2+u_y^2)(-3x,-2y),\vec{\bf n}\rangle,
\\[2mm]
\label{3.10}
&&\hskip -1,5truecm
\om_2=\langle-2F(u)(-3x,-2y)-u(-yu_x,-u_y), \vec{\bf n}\rangle,
\eeqn
$\vec {\bf n}$ being the unit outer normal field to $\Om$.
\end{theorem}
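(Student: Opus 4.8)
The plan is to recognise (\ref{3.8}) as a Rellich--Poho\v zaev identity and to obtain it by testing the equation $Tu=f(u)$ against the dilation multiplier $Du=-3xu_x-2yu_y$ of Section~\ref{Sec3}, supplemented by the elementary energy identity obtained by testing against $u$ itself. First I would multiply $Tu=f(u)$ by $Du$ and integrate over $\Om$. On the source side, using $f(u)u_x=(F(u))_x$ and $f(u)u_y=(F(u))_y$ and extracting a divergence, one gets
\beqn\no
&& f(u)\,Du=-3x(F(u))_x-2y(F(u))_y=5F(u)-\div\big((3x,2y)F(u)\big),
\eeqn
so the right-hand side contributes the bulk term $5F(u)$ together with a boundary flux $-\langle F(u)(3x,2y),\vec{\bf n}\rangle$.

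For the operator side I would expand $(-yu_{xx}-u_{yy})(-3xu_x-2yu_y)$ into its four products $3xyu_{xx}u_x$, $2y^2u_{xx}u_y$, $3xu_{yy}u_x$, $2yu_{yy}u_y$ and integrate each once by parts, keeping the two mixed products in divergence form. A direct bookkeeping shows that all second-order contributions collect into a single divergence $\div(P,Q)$ whose flux reproduces exactly half of $\om_1$, in the sense that $2\langle(P,Q),\vec{\bf n}\rangle=\om_1$, while the residual bulk terms collapse to $\tfrac12(yu_x^2+u_y^2)$. To eliminate this last gradient integral I would invoke the energy identity
\beqn\no
&& \int_\Om(yu_x^2+u_y^2)\d x\d y=\int_\Om uf(u)\d x\d y+\int_{\partial\Om}\langle(yuu_x,uu_y),\vec{\bf n}\rangle\d s,
\eeqn
obtained by multiplying $Tu=f(u)$ by $u$ and integrating by parts. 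Substituting this relation, rearranging so that the bulk sits on the left, and multiplying the whole equality by $2$, the interior integrand becomes $10F(u)-uf(u)$, while the boundary flux becomes $\int_{\partial\Om}\om_1\d s$ together with $\int_{\partial\Om}[\langle(yuu_x,uu_y),\vec{\bf n}\rangle+2F(u)\langle(3x,2y),\vec{\bf n}\rangle]\d s$; the bracketed expression is precisely $\om_2$, so the boundary term is $\int_{\partial\Om}(\om_1+\om_2)\d s$.

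It then remains to reduce the full boundary integral over $\partial\Om=AC\cup BC\cup\s$ to the two pieces in (\ref{3.8}). Because $u\equiv0$ on $AC\cup\s$, one has $F(u)=0$ and $u=0$ there, so $\om_2$ vanishes on both $AC$ and $\s$, leaving only $\om_1$ on $\s$. On the characteristic $AC$ even $\om_1$ vanishes: the Dirichlet condition forces $\nabla u$ to be parallel to $\vec{\bf n}$, and along $AC$ the outer normal is proportional to $(1,\sqrt{-y})$, whence simultaneously $yu_x^2+u_y^2=0$ and $\langle(-yu_x,-u_y),\vec{\bf n}\rangle=0$; both summands of $\om_1$ therefore vanish identically on $AC$. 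Only $\int_{BC}(\om_1+\om_2)\d s+\int_\s\om_1\d s$ survives, which is (\ref{3.8}).

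In my view the algebra above is routine; the genuine obstacle is the rigorous justification of the integrations by parts. The operator $T$ degenerates along the parabolic segment $AB$, the domain $\Om$ has corners at $A$, $B$ and $C$, and elements of $\wtil W_{AC\cup\s}^1(\Om)$ may fail to have $u_x\in L^2$ near $A$ and $B$. The hypotheses $u_y,xu_x,yu_x\in C^1(\ov\Om)$ and $xu\in C^2(\Om)$ are tailored precisely so that the vector fields $(P,Q)$, $(yuu_x,uu_y)$ and $F(u)(3x,2y)$ have boundary traces with integrable flux and so that no spurious corner contributions appear. I would therefore apply the divergence theorem on a family of subdomains exhausting $\Om$ while staying away from the corners and from the line $y=0$, and then pass to the limit, the stated regularity ensuring that the corner and parabolic-line boundary contributions tend to zero.
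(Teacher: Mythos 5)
The paper does not prove Theorem \ref{thm3.3} itself but defers to \cite[Theorem 3.1]{LP5} (case $k=0$), whose argument is exactly the energy-integral/multiplier method you reconstruct: test the equation against $Du$ and against $u$, collect the second-order terms into a divergence, and kill $\om_2$ on $AC\cup\s$ and $\om_1$ on the characteristic $AC$ via the Dirichlet condition. Your algebra is correct --- with $(P,Q)$ equal to half the vector field appearing in $\om_1$ one indeed gets $\div(P,Q)=Tu\,Du-\tfrac12(yu_x^2+u_y^2)$, and the bracketed boundary flux is precisely $\om_2$ --- and your closing caveat about justifying the divergence theorem under the weakened regularity hypotheses is exactly the content of Remark \ref{rem3.4}, so the proposal is essentially the cited proof.
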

\begin{remark}\label{rem3.4}
\emph{Observe that, according to \cite[p. 420]{LP5},
we have formulated Theorem \ref{thm3.3} under weaker assumptions for $u$.
In fact, the requirements $u_y$, $xu_x$, $yu_x\in C^1(\ov\Om)$  and
$xu\in C^2(\Om)$ suffice for applying the classical divergence theorem
for $C^1(\ov\Om)$ vector fields and for exchanging the order of certain partial
derivatives in the proof of (\ref{3.8}), and allow to weakening the
original stronger condition $u\in C^2(\ov\Om)$ (see \cite[Theorem 3.1]{LP5}).}
\end{remark}
Since the starting point for obtaining our estimates on the eigenfunctions of the
Tricomi operator is the identity (\ref{3.8}), we conclude the section
spending some words on it. 
In the theory of semilinear elliptic equations the first appearance of 
an identity between volume and surface integrals of kind (\ref{3.8}) goes back
to \cite{Po}. There, such an identity resulted from an energy 
integral method consisting in multiplying the differential equation 
by a suitable vector field and then applying the divergence theorem.
Since \cite{Po}, this method for obtaining identities of type (\ref{3.8})
has become a standard tool in the theory of semilinear elliptic equations.
On the contrary, the situation is quite different
for semilinear equations of mixed elliptic-hyperbolic and degenerate types
where, to our knowledge, the only remarkable results in the derivation
of such identities are those in \cite{LP5}.
Indeed, using an argument that reproduces the original idea of \cite{Po},
in \cite{LP5} identities of type (\ref{3.8}) are derived for 
the semilinear problem (\ref{3.1}), with the Tricomi operator $T$ 
being replaced by the more general Gellerstedt operator 
$L=-y^{2k+1}\partial_x^2-\partial_y^2$, $k\in\nsp\cup\{0\}$.
In particular, the above Theorem \ref{thm3.3} is obtained
by taking $k=0$ in \cite[Theorem 3.1]{LP5}.

Usually, Poho\v zaev identities are applied for the proof of nonexistence results.
In doing so, one has only to show that the signs of the volume 
and surface integrals are incompatible with the existence of nontrivial solutions.
This is, for instance, the scheme followed in the quoted papers 
\cite{Po} and \cite{LP5}.
Our approach will be different. For the problem ({\rm LTE}) 
(corresponding to $f(u)=\l u$ in (\ref{3.1})) $F(u)$ turns out to be $\l u^2/2$,
so that the left-hand side of (\ref{3.8}) reduces to $4\l\|u\|_{L^2(\Om)}^2$.
Then, we shall get our estimates on the eigenfunctions of the Tricomi problem
simply by showing that the right-hand side of (\ref{3.8}) is nonnegative and
upper bounded by a suitable quantity.
\begin{remark}\label{rem3.5}
\emph{Of course, a remark is in order about the approach 
summarized in the last paragraph. 
Indeed, the eigenfunctions of the Tricomi problem are, in general, complex valued, 
and we are not in position to apply Theorem \ref{thm3.3}, 
which, due to the assumption $f\in C^0(R)$ and the presence
of the canonical inner product of $\rsp^2$, 
requires a real context for its application.
However, if we restrict our interest to the eigenfunctions $u$ corresponding to 
real positive eigenvalues $\l\in[\l_0,+\infty)$ (see Theorem \ref{thm2.4}),
then we can apply separately our approach to their real and imaginary parts,
$\Re u$ and $\Im u$. 
For, $T$ being a linear operator, we have} 
\beqn\label{3.11}
&&\hskip -0,5truecm
Tu=\l u,\q \l\in\rsp,
\q\iff\q
\left\{\!\!
\begin{array}{lll}
T(\Re u)=\l\Re u,
\\[2mm]
T(\Im u)=\l\Im u,
\end{array}
\right.
\q\l\in\rsp.
\eeqn
\emph{That is, $u$ is an eigenfunction corresponding to a real eigenvalue $\l$
if and only if its real and imaginary parts $\Re u$ and $\Im u$ 
are {\it real valued} eigenfunctions corresponding to $\l$.
Thus, once we have estimated $\|\Re u\|_{L^2(\Om)}^2$
and $\|\Im u\|_{L^2(\Om)}^2$, our estimate on the $L^2$-norm 
of the (possibly complex valued) eigenfunctions $u$ corresponding to positive 
eigenvalues will follow from $\|u\|_{L^2(\Om)}^2=(u,u)_{2,\sim}=
\|\Re u\|_{L^2(\Om)}^2+\|\Im u\|_{L^2(\Om)}^2$, 
where $(v_1,v_2)_{2,\sim}=\int_{\Om}v_1\ov{v_2}\d x\d y$.}
\end{remark}
\section{Main results}\label{Sec4}
\setcounter{equation}{0}
From the definition of normal Tricomi domains given in Section \ref{Sec1}, 
and in particular from conditions $(g1)$--$(g3)$, it follows that
the product $g^{\a}g'$, $\a>0$, is a continuous function in the
interval $(2x_0,0)$, but the limits $\lim_{x\to 0^-}[g(x)]^{\a}g'(x)$
and  $\lim_{x\to (2x_0)^+}[g(x)]^{\a}g'(x)$ lead, in general, to the
indeterminate form  $0\cdot\infty$. 
On the other side, if we look at the normal curve (\ref{3.2}), we see that
derivative $g_T'$ of the function $g_T$ defined by (\ref{3.3}) satisfies the
identity $g_T'(x)=-(3/2)(x-x_0)[g_T(x)]^{-2}$ for every $x\in(2x_0,0)$.
Hence, in this case, the product $g_T^\a g_T'$, $\a=2$, is the
linear function $-(3/2)(x-x_0)$ and both
the mentioned limits exist and are equal to $(3/2)x_0$ and $-(3/2)x_0$, respectively. 
This observation leads us to consider those normal Tricomi domains
having boundary $\partial\Om=AC\cup BC\cup\s$, where the characteristics
$AC$ and $BC$ are as in (\ref{2.2}) and (\ref{2.3}), 
and the elliptic boundary arc $\s$ is the graph of a function 
$g=g(x)$, $x\in[2x_0,0]$, which satisfies conditions $(g1)$--$(g4)$ 
and the additional requirement that $g^\a g'$, $\a>0$, 
is a linear function with negative slope, i.e.
\beqn\label{4.1}
&&\hskip -1truecm
[g(x)]^\a g'(x)=\b x+\g,\q x\in(2x_0,0),\q \a>0>\b,\q \g\in\rsp.
\eeqn
Integrating (\ref{4.1}) from $2x_0$ to $x$, $x\in[2x_0,0]$, 
and using $g(2x_0)=0$, we thus find
\beqn\label{4.2}
&&\hskip -1truecm
[g(x)]^{\a+1}=(\a+1)[(\b/2)x^2+\g x-2x_0(\b x_0+\g)],\q x\in[2x_0,0].
\eeqn
The additional requirement $g(0)=0$ yields $\g=-\b x_0$, 
so from (\ref{4.2}) we finally obtain the family of functions
\beqn\label{4.3}
&&\hskip -0,5truecm
g_{\a,\b}(x)=\Big[\frac{(\a+1)|\b| x(2x_0-x)}{2}\Big]^{1/(\a+1)},\q x\in[2x_0,0],\q\a>0>\b.
\eeqn
For construction, the functions $g_{\a,\b}$ defined by (\ref{4.3}) 
satisfy conditions $(g1)$-$(g4)$. Of course, $(gj)$, $j=1,2$, are obvious,
since (see (\ref{4.1}) with $\g=-\b x_0$)  
$g_{\a,\b}'(x)=\b(x-x_0)[g_{\a,\b}(x)]^{-\a}$, $x\in(2x_0,0)$. 
Observe also that the functions $g_{\a,\b}$
are even with respect to the line $\{x=x_0\}$ and increasing
in the interval $[2x_0,x_0]$. Hence they attain their maximum
value $g_{\a,\b}(x_0)=[(\a+1)|\b| x_0^2/2]^{1/(\a+1)}$
at the unique critical point $x=x_0$. 
Moreover, differentiating $(\ref{4.1})$ with respect to $x$ we get
\beqn\no
&&\hskip-0,5truecm
g_{\a,\b}''(x)=\{\b-\a[g_{\a,\b}(x)]^{\a-1}[g_{\a,\b}'(x)]^2\}[g_{\a,\b}(x)]^{-\a},\q
x\in(2x_0,0),
\eeqn
so $g_{\a,\b}''\in C((2x_0,0))$ and
$g_{\a,\b}''(x)\le -K_0(\a,\b)$ for every $x\in(2x_0,0)$, where 
\beqn\no
&&\hskip-1truecm
K_0(\a,\b)=-g_{\a,\b}''(x_0)=-\b[g_{\a,\b}(x_0)]^{-\a}
=|\b|[(\a+1)|\b|x_0^2/2]^{-\a/(\a+1)}>0.
\eeqn 
Therefore, conditions $(gj)$, $j=3,4$, are satisfied, too.
\begin{remark}\label{rem4.1}
\emph{We stress that the function $g_T$, defined by (\ref{3.3})
and corresponding to the choice of the normal curve for the elliptic boundary
arc $\s$, is obtained for the values $\a=2$ and $\b=-3/2$ in (\ref{4.3}), 
that is $g_T=g_{2,-3/2}$. It is a simple computation to verify
that in this case the value $K_0(2,-3/2)$ coincides with the value 
$K_0$ in (\ref{3.4})}
\end{remark}
From now on $\Om_{\a,\b}$, $\a>0>\b$, will denote a normal Tricomi domain
whose boundary $\partial\Om_{\a,\b}$ consists of the characteristics $AC$ and 
$BC$ given in (\ref{2.2}) and (\ref{2.3}) and of the elliptic arc 
$\s_{\a,\b}=\{(x,y)\in\rsp^2:y=g_{\a,\b}(x),\ x\in[2x_0,0]\}$,
where $g_{\a,\b}$ is defined by (\ref{4.3}).
Let us parametrize the curves $AC$, $BC$ and $\s_{\a,\b}$ in order to give to
$\partial\Om_{\a,\b}$ the positive orientation of leaving the interior of 
$\Om_{\a,\b}$ on the left, i.e. the counterclockwise orientation.
To this purpose, denoting by $r_\G:I\subset\rsp\to\rsp^2$, $I$ an interval, 
the parametric curve representing a subset $\G$ of $\partial\Om_{\a,\b}$, 
we have:
\beqn\label{4.4}
&&\hskip -2truecm
r_{AC}(-y)=(2x_0+(2/3)(-y)^{3/2},y),\q y\in[y_C,0],
\\[2mm]
\label{4.5}
&&\hskip -2truecm
r_{BC}(y)=(-(2/3)(-y)^{3/2},y),\q y\in[y_C,0],
\\[2mm]
\label{4.6}
&&\hskip -2truecm
r_{\s_{\a,\b}}(-x)=(x,g_{\a,\b}(x)),\q x\in[2x_0,0],
\eeqn
where $y_C=-(3|x_0|/2)^{2/3}$.
Consequently, the unit outer normals on the characteristics and on $\s_{\a,\b}$ 
are given by
\beqn
\label{4.7}
&&\hskip -1truecm
\vec{\bf n}_{AC}=(1-y)^{-1/2}(-1,-(-y)^{1/2}),\q y\in[y_C,0],
\\[2mm]
\label{4.8}
&&\hskip -1truecm
\vec{\bf n}_{BC}=(1-y)^{-1/2}(1,-(-y)^{1/2}),\q y\in[y_C,0],
\\[2mm]
\label{4.9}
&&\hskip -1truecm
\vec{\bf n}_{\s_{\a,\b}}=
\left\{
\begin{array}{lll}
(-1,0)\q x=2x_0,
\\[2mm]
\{[g_{\a,\b}'(x)]^2+1\}^{-1/2}(-g_{\a,\b}'(x),1),\q x\in(2x_0,0),
\\[2mm]
(1,0),\q x=0.
\end{array}
\right.
\eeqn
Observe that, using $g_{\a,\b}'(x)=\b(x-x_0)[g_{\a,\b}(x)]^{-\a}$, 
easy computations yield to:
\beqn\label{4.10}
&&\hskip -1truecm
\frac{(-g_{\a,\b}'(x),1)}{\{[g_{\a,\b}'(x)]^2+1\}^{1/2}}=
\frac{(x-x_0,|\b|^{-1}[g_{\a,\b}(x)]^\a)}
{\{(x-x_0)^2+\b^{-2}[g_{\a,\b}(x)]^{2\a}\}^{1/2}},\q x\in(2x_0,0).
\eeqn
Then, the vector on the right-hand side of (\ref{4.10}) being defined also 
for $x=2x_0$ and $x=0$ where it is equal to $(-1,0)$ and $(1,0)$, respectively,
we can replace (\ref{4.9}) with the more compact formula:
\beqn\label{4.11}
&&\hskip -1truecm
\vec{\bf n}_{\s_{\a,\b}}=[h_{\a,\b}(x)]^{-1}(x-x_0,|\b|^{-1}[g_{\a,\b}(x)]^\a),
\q x\in[2x_0,0],
\eeqn
where $h_{\a,\b}$ is the positive continuous function, 
even with respect to the line $\{x=x_0\}$,
\beqn\label{4.12}
&&\hskip -0truecm
h_{\a,\b}(x)=\{(x-x_0)^2+\b^{-2}[g_{\a,\b}(x)]^{2\a}\}^{1/2},\q x\in[2x_0,0].
\eeqn
Notice that, if $\a=1$, then (\ref{4.3}) and (\ref{4.12}) yield
$h_{1,\b}(x)=\{(|\b|^{-1}-1)x(2x_0-x)+x_0^2\}^{1/2}$ and hence, if $\b=-1$,
$h_{1,\b}$ reduces to the constant function $h_{1,-1}(x)=|x_0|$, $x\in[2x_0,0]$.
\begin{lemma}\label{lem4.2}
The boundary $\partial\Om_{\a,\b}$ of $\Om_{\a,\b}$, $\a>0>\b$, 
is $D$-starlike with respect to $D=-3x\partial_x-2y\partial_y$ 
if and only if $\a\ge1/2$.
\end{lemma}
\begin{proof}
We have to prove that $\langle(-3x,-2y),\vec{\bf n}(x,y)\rangle\le 0$ at each point 
$(x,y)\in\partial\Om_{\a,\b}$ if and only if $\a\ge 1/2$, where $\vec{\bf n}(x,y)$ 
is the unit outer normal to $\partial\Om_{\a,\b}$ at the point $(x,y)$. 
First, from (\ref{4.4}), (\ref{4.5}), (\ref{4.7}) and (\ref{4.8}) we get:
\beqn\label{4.13}
&&\hskip -1truecm
\left\{\!\!
\begin{array}{lll}
\langle(-3x,-2y),\vec{\bf n}_{AC}\rangle=6x_0(1-y)^{-1/2}<0,\q\forall\,(x,y)\in AC,
\\[2mm]
\langle(-3x,-2y),\vec{\bf n}_{BC}\rangle=0,\q\forall\,(x,y)\in BC.
\end{array}
\right.
\eeqn
On the contrary, from (\ref{4.6}) and (\ref{4.11}) it follows
\beqn\label{4.14}
\hskip -0,5truecm
\langle(-3x,-2y),\vec{\bf n}_{\s_{\a,\b}}\rangle
&\!\!\!=\!\!\!&
[h_{\a,\b}(x)]^{-1}\{-3x(x-x_0)-2|\b|^{-1}[g_{\a,\b}(x)]^{\a+1}\}\no
\\[2mm]
\hskip -0,5truecm
&\!\!\!=\!\!\!&
[h_{\a,\b}(x)]^{-1}[-3x(x-x_0)-(\a+1)x(2x_0-x)]\no
\\[2mm]
\hskip -0,5truecm
&\!\!\!=\!\!\!&
[h_{\a,\b}(x)]^{-1}x[(\a-2)x+(1-2\a)x_0],
\q\forall\,(x,y)\in\s_{\a,\b}.
\eeqn
Therefore, since $x\in[2x_0,0]$ in $\s_{\a,\b}$, the inequality 
$\langle(-3x,-2y),\vec{\bf n}_{\s_{\a,\b}}\rangle\le 0$ will be satisfied
for every $(x,y)\in\s_{\a,\b}$ if and only if $(\a-2)x+(1-2\a)x_0\ge 0$
for every $x\in[2x_0,0]$. 
It thus suffices to analyze the behavior of
the straight line $l_\a(x)=(x,(\a-2)x+(1-2\a)x_0)$, $x\in\rsp$, 
in dependence of the parameter $\a>0$. To this purpose, 
we denote with $m_\a$ and $y_\a$ the real numbers $\a-2$ and
$(1-2\a)x_0$, respectively.
Of course, if $\a<1/2$, then $y_\a<0$ and 
$l_\a$ is a straight line passing for the point $(0,y_\a)$ with 
negative slope $m_\a<-3/2$. 
So, in this case, the inequality $m_\a x+y_\a\ge 0$ is satisfied 
only for the values of $x$ less or equal than the negative number 
$-m_\a^{-1}y_\a$ and the right-hand side of (\ref{4.14}) is positive 
for $x\in (-m_\a^{-1}y_\a,0]$, contradicting the
$D$-starlikeness of $\partial\Om_{\a,\b}$.
On the contrary, if $\a\ge1/2$, then $y_\a\ge 0$ 
and $l_\a$ is a straight line passing through $(0,y_\a)$ with slope $m_\a\ge-3/2$.
In particular, if $\a\in[1/2,2)$, then $m_\a\in[-3/2,0)$ and the
inequality $m_\a x+y_\a\ge 0$ is satisfied for every $x$ less or equal than
the nonnegative number $-m_\a^{-1}y_\a$ and a fortiori for every $x\in[2x_0,0]$.
If $\a=2$, then $(m_\a,y_\a)=(0,-3x_0)$ and $m_\a x+y_\a=-3x_0>0$
for every $x\in\rsp$. Finally, if $\a> 2$, then
$l_\a$ passes through the point $(0,y_\a)$ with positive slope $m_\a$,
so that $m_\a x+y_\a\ge 0$ for every $x$ greater or equal than the 
negative number $-m_\a^{-1}y_\a$. 
However, when $\a>2$, we have $2x_0\ge -m_\a^{-1}y_\a=-(\a-2)^{-1}(1-2\a)x_0$ 
and hence the inequality $m_\a x+y_\a\ge 0$ holds a fortiori for every 
$x\in[2x_0,0]$. This completes the proof.
\end{proof}
There is more. That is, $\Om_{\a,\b}$ is just $D$-star-shaped 
in the sense of Definition \ref{def3.1} if and only if $\a\ge 1/2$. 
We shall not need this fact later 
(all that we shall need is the $D$-starlikeness of $\Om_{\a,\b}$, already proved), 
but we prove it for completeness since the proof is very easy and since 
it gives a concrete character to the abstract notion of $D$-star-shaped domain.
\begin{lemma}\label{lem4.3}
$\Om_{\a,\b}$, $\a>0>\b$, is $D$-star-shaped with respect to 
$D=-3x\partial_x-2y\partial_y$ if and only if $\a\ge 1/2$.
\end{lemma}
\begin{proof}
Clearly, if $\Om_{\a,\b}$ is $D$-star shaped, then (see Lemma \ref{lem3.2}) 
its boundary $\partial\Om_{\a,\b}$ is $D$-starlike and hence $\a\ge1/2$
by virtue of the previous Lemma \ref{lem4.2}.
Let us assume now $\a\ge 1/2$ and prove that $\Om_{\a,\b}$ is $D$-star-shaped.
As it is well-known (see \cite[Chapter 15]{CL}), for the linear system (\ref{3.7})
the origin is an improper node asymptotically stable and
every orbit, except the two corresponding to the positive and negative $x$-axis,
tends to the origin tangentially to the $y$-axis. 
It thus suffices to show 
${\cal F}_t(\ov x,\ov y)=(\ov x\ee^{-3t},\ov y\ee^{-2t})\subset\ov{\Om_{\a,\b}}$
for every $t\in[0,+\infty]$ only for the points $(\ov x,\ov y)\in\partial\Om_{\a,\b}$.
Indeed, for every $(\ov x,\ov y)\in\Om_{\a,\b}$ there corresponds 
a unique $(\wtil x,\wtil y)\in\partial\Om_{\a,\b}$ such that 
$(\wtil x,\wtil y)={\cal F}_{t_0}(\ov x,\ov y)$ for 
$t_0=3^{-1}\ln(\ov x/\wtil x)=2^{-1}\ln(\ov y/\wtil y)<0$,
and, cosequently, ${\cal F}_t(\ov x,\ov y)={\cal F}_{t-t_0}(\wtil x,\wtil y)$, $t\in\rsp$.
Let first $(\ov x,\ov y)\in\s_{\a,\b}$. 
To prove ${\cal F}_t(\ov x,\ov y)\in\ov{\Om_{\a,\b}}$, $t\in[0,+\infty]$, 
where ${\cal F}_{+\infty}(\ov x,\ov y)=(0,0)=B$, we have to show that
$\ov y\ee^{-2t}\le g_{\a,\b}(\ov x\ee^{-3t})$ for every $t\ge 0$, 
or, equivalently,
\beqn\label{4.15}
&&\hskip -1truecm
(\a+1)|\b|{\ov x}^2\ee^{-6t}-2(\a+1)|\b|x_0\ov x\ee^{-3t}
+2{\ov y}^{(\a+1)}\ee^{-2(\a+1)t}\le 0,\q t\ge 0.
\eeqn
But, if $(\ov x,\ov y)\in\s_{\a,\b}$, then $\ov y=g_{\a,\b}(\ov x)$ and
$2{\ov y}^{(\a+1)}=(\a+1)|\b|\ov x(2x_0-\ov x)$.
Replacing this latter identity in (\ref{4.15}) we find that we have to prove 
\beqn\label{4.16}
&&\hskip -1truecm
(\a+1)|\b|\ov x\big\{\ov x[\ee^{-6t}-\ee^{-2(\a+1)t}]
-2x_0[\ee^{-3t}-\ee^{-2(\a+1)t}]\big\}\le 0,\q t\ge 0.
\eeqn
Since $\a\ge 1/2$ and $\ov x\in[2x_0,0]$, (\ref{4.16}) is satisfied if and only if 
\beqn\label{4.17}
&&\hskip -1truecm
\ov x c_{\a,t}\ge
2x_0 d_{\a,t},\q t\ge 0,
\eeqn
where for $\a\ge 1/2$ and $t\ge 0$ we have set 
$c_{\a,t}=[\ee^{-6t}-\ee^{-2(\a+1)t}]$ and $d_{\a,t}=[\ee^{-3t}-\ee^{-2(\a+1)t}]$. 
Of course, $c_{\a,t}<d_{\a,t}$ and $d_{\a,t}\ge 0$. 
Moreover, $c_{\a,t}>0$, $c_{\a,t}=0$ or $c_{\a,t}<0$ according that 
$\a>2$, $\a=2$ or $\a\in[1/2,2)$. So, if $\a>2$, then (\ref{4.17}) holds for
$\ov x\ge 2x_0d_{\a,t}(c_{\a,t})^{-1}$. But $d_{\a,t}(c_{\a,t})^{-1}>1$, 
and (\ref{4.17}) holds a fortiori for every $\ov x\in[2x_0,0]$. 
If $\a=2$, then $0=c_{\a,t}<d_{\a,t}=[\ee^{-3t}-\ee^{-6t}]$ and $(\ref{4.17})$ 
reduces to $0\ge 2x_0d_{\a,t}$, which is true for every $t\ge 0$.
Finally, if $\a\in[1/2,2)$, then $c_{\a,t}<0\le d_{\a,t}$ and (\ref{4.17}) is satisfied
for every $\ov x$ less or equal than the nonnegative real number
$2x_0d_{\a,t}(c_{\a,t})^{-1}$. Hence, it is a fortiori satisfied for every $x\in[2x_0,0]$.
It remains to analyze the orbits ${\cal F}_{t}(\ov x,\ov y)$ of the
points $(\ov x,\ov y)\in AC\cup BC$.
If $(\ov x,\ov y)\in BC$, then $3\ov x\ee^{-3t}+2(-\ov y\ee^{-2t})^{3/2}
=[3\ov x+2(-\ov y)^{3/2}]\ee^{-3t}=0$, meaning that 
${\cal F}_{t}(\ov x,\ov y)\in BC$ for every $t\in[0,+\infty]$.
Let $(\ov x,\ov y)\in AC$. Due to what already proved
and since orbits do not intersect each other,
we have that ${\cal F}_{t}(\ov x,\ov y)$ remains between 
the orbit ${\cal F}_{t}(2x_0,0)$ and the characteristic $BC$, 
that is ${\cal F}_{t}(\ov x,\ov y)\in\ov{{(\Om_{\a,\b})}_-}$ 
for every $t\in[0,+\infty]$. This completes the proof.
\end{proof}
We now start to estimate the right-hand side of (\ref{3.8}).
For the sake of simplicity, in the sequel, for any 
$v:\ov{\Om_{\a,\b}}\to\rsp$, $\a>0>\b$, $\wtil v$ and $\what v$ 
denote the restrictions of $v$ to $BC$ and $\s_{\a,\b}$, respectively, i.e.
\beqn\label{4.18}
&&\hskip -2truecm
\wtil v:=v_{|BC},\qq \what v:=v_{|\s_{\a,\b}}.
\eeqn
As usual, for any pair ${\bf w}=(w_1,w_2)$, $|{\bf w}|$ stands for
its Euclidian norm $(w_1^2+w_2^2)^{1/2}$. Then, recalling that 
a curve $\G\subset\rsp^2$ is said {\it regular} if it admits a parameterization  
$r_\G:I\subset\rsp\to\rsp^2$, $I$ an interval, such that $r_\G\in C^1(I)$
and $r_\G'(t)\neq 0$ for every $t\in I$, 
we denote by $L^2(\G)$ the set of all real (respectively, complex)
valued functions $\psi$ such that 
$\|\psi\|_{L^2(\G)}^2=\int_\G \psi^2\d s<+\infty$
(respectively, $\|\psi\|_{L^2(\G)}^2=\int_\G |\psi|^2\d s<+\infty$), 
where $\d s=|r_\G'(t)|\d t$, $t\in I$. In particular, $\G$ is rectifiable if
and only if $\psi\equiv 1\in L^2(\G)$.
\begin{lemma}\label{lem4.4}
Let $\om_1$ be defined by formula $(\ref{3.9})$, where $u$
is a real valued function such that $|y|^{1/2}u_x$, $u_y\in L^2(BC)$, 
$BC$ being defined by $(\ref{2.3})$.
Then, for every $\ve>0$, the following estimate holds:
\beqn\label{4.19}
&&\hskip -1truecm
0\le \int_{BC}\om_1\d s
\le C_1(x_0,\ve)\||y|^{1/2}u_x\|_{L^2(BC)}^2+C_2(x_0,\ve)\|u_y\|_{L^2(BC)}^2,
\eeqn
where
\beqn\label{4.20}
&&\hskip -1truecm
C_j(x_0,\ve):=
\frac{6|x_0|\big(1+\ve^{(-1)^{j+1}}\big)}{[1+(3|x_0|/2)^{2/3}]^{1/2}},\q j=1,2.
\eeqn
\end{lemma}
\begin{proof}
First, replacing $\vec{\bf n}$ with the unit vector $\vec{\bf n}_{BC}$ 
defined by (\ref{4.8}) and using the second equality of (\ref{4.13}), 
formula (\ref{3.9}) simplifies to give
\beqn\label{4.21}
\hskip -1truecm
\wtil{\om_1}
&\!\!\!=\!\!\!&
\langle 2Du(-yu_x,-u_y),\vec{\bf n}_{BC}\rangle\no
\\[2mm]
&\!\!\!=\!\!\!&
2(1-y)^{-1/2}
\big\{3xy\wtil{u_x}^2+[2y^2-3x(-y)^{1/2}]\wtil{u_x}\wtil{u_y}
+2(-y)^{3/2}\wtil{u_y}^2\big\}.
\eeqn
According to (\ref{4.5}) we now replace $x$ with $-(2/3)(-y)^{3/2}$,
where $y\in[y_C,0]$. With a such substitution, from $(\ref{4.21})$ we easily find
\beqn\label{4.22}
\hskip -1truecm
\wtil{\om_1}
&\!\!\!=\!\!\!&
4(1-y)^{-1/2}\big[(-y)^{5/2}\wtil{u_x}^2
+2(-y)^2\wtil{u_x}\wtil{u_y}+(-y)^{3/2}\wtil{u_y}^2\big]\no
\\[2mm]
&\!\!\!=\!\!\!&
4(-y)^{3/2}(1-y)^{-1/2}\big[(-y)^{1/2}\wtil{u_x}+\wtil{u_y}\big]^2\ge 0.
\eeqn
Then, using the well-know inequality
$(a+b)^2\le (1+\ve)a^2+(1+\ve^{-1})b^2$, $a$, $b\in\rsp$, $\ve>0$, 
and observing that the function $p(y)=(-y)^{3/2}(1-y)^{-1/2}$ 
is decreasing for $y\le 0$, from (\ref{4.22}) we obtain
\beqn\label{4.23}
\hskip -1truecm
0&\!\!\!\le\!\!\!&\int_{BC}\om_1\d s\le
4\int_{y_C}^0(-y)^{3/2}(1-y)^{-1/2}
\big[(-y)^{1/2}\wtil{u_x}+\wtil{u_y}\big]^2|r_{BC}'(y)|\d y\no
\\[2mm]
\hskip -1truecm
&\!\!\!\le\!\!\!&
4(-y_C)^{3/2}(1-y_C)^{-1/2}
\big[(1+\ve)\||y|^{1/2}u_x\|_{L^2(BC)}^2+(1+\ve^{-1})\|u_y\|_{L^2(BC)}^2\big].
\eeqn
Replacing $y_C$ with $-(3|x_0|/2)^{2/3}$ in (\ref{4.23}),
the proof of (\ref{4.19}) is complete.
\end{proof}
\begin{lemma}\label{lem4.5}
Let us replace $\om_1$ and formula $(\ref{3.9})$ with $\om_2$
and formula $(\ref{3.10})$ in the hypothesis of {\rm Lemma \ref{lem4.4}}
and assume further that $u\in L^2(BC)$. Then, the following estimate holds:
\beqn\label{4.24}
&&\hskip -1truecm
\int_{BC}\om_2\d s\le 
C_3(x_0)\|u\|_{L^2(BC)}\big[\||y|^{1/2}u_x\|_{L^2(BC)}+\|u_y\|_{L^2(BC)}\big],
\eeqn
where
\beqn\label{4.25}
&&\hskip -2truecm
C_3(x_0):=\frac{(3|x_0|/2)^{1/3}}{[1+(3|x_0|/2)^{2/3}]^{1/2}}.
\eeqn
\end{lemma}
\begin{proof}
As in the proof of Lemma \ref{lem4.4},
replacing $\vec{\bf n}$ with the explicit form of $\vec{\bf n}_{BC}$ given by (\ref{4.8}) 
and using the second equation in (\ref{4.13}), we simplify (\ref{3.10}) to
\beqn\label{4.26}
&&\hskip -1truecm
\wtil\om_2=\langle-\wtil u(-y\wtil{u_x},-\wtil{u_y}),\vec{\bf n}_{BC}\rangle
=-(-y)^{1/2}(1-y)^{-1/2}\wtil u\big[(-y)^{1/2}\wtil{u_x}+\wtil{u_y}\big].
\eeqn
Therefore, applying H\"older's inequality and observing that
the function $q(y)=-y(1-y)^{-1}$ is decreasing for $y\le 0$, 
from (\ref{4.26}) it follows
\beqn\no
\hskip 0truecm
\int_{BC}\om_2\d s
&\!\!\!=\!\!\!&
\int_{y_C}^0\big[-(-y)^{1/2}(1-y)^{-1/2}\wtil u\,|r_{BC}'(y)|^{1/2}\big]
\big[(-y)^{1/2}\wtil{u_x}+\wtil{u_y}\big]|r'_{BC}(y)|^{1/2}\d y\no
\\[2mm]
&\!\!\!\le\!\!\!&
\Big(\int_{y_C}^0(-y)(1-y)^{-1}\wtil u^{\,2}|r_{BC}'(y)|\d y\Big)^{1/2}
\big[\||y|^{1/2}u_x\|_{L^2(BC)}+\|u_y\|_{L^2(BC)}\big]\no
\\[2mm]
&\!\!\!\le\!\!\!&
(-y_C)^{1/2}(1-y_C)^{-1/2}\|u\|_{L^2(BC)}
\big[\||y|^{1/2}u_x\|_{L^2(BC)}+\|u_y\|_{L^2(BC)}\big].\no
\eeqn
This completes the proof.
\end{proof}
\begin{remark}\label{rem4.6}
\emph{Observe that, contrarily to Lemma \ref{lem4.4} where (\ref{4.22})
implies $\int_{BC}\om_1\d s\ge 0$, 
in Lemma \ref{lem4.5} we cannot ensure $\int_{BC}\om_2\d s\ge 0$, since
(\ref{4.26}) may change sign. However, as we shall see later, 
when $f(u)=\l u$ in problem (\ref{3.1}) 
with $\l\in\s(\wtil T_{AC\cup\s})\cap[\l_0,+\infty)$ (see (\ref{2.9})),
the nonnegativity of the sum of integrals on the right-hand side of (\ref{3.8})
will be a consequence of that of the left-hand side. 
Of course, this agrees with the obvious fact
that the sum $\int_{BC}(\om_1+\om_2)\d s+\int_{\s}\om_1\d s$ 
may be nonnegative even though some of its terms are nonpositive. 
Notice also that we cannot use the result in \cite[pp. 416, 417]{LP5} 
which establishes $\int_{BC}(\om_1+\om_2)\d s\ge 0$,
since there it is assumed that the function $\varphi(y)=u(r_{BC}(y))$
belongs to $C^2((y_C,0))\cap C^1([y_C,0])$, which is not our case.}
\end{remark}
We now turn our attention to the last term on the right-hand side of (\ref{3.4}), 
i.e. the integral of $\om_1$ along the elliptic normal arc $\s$,
where $\s=\s_{\a,\b}$ for some $\a>0>\b$.
For our purposes, we shall need to prove some preliminary technical results. 
To motivate them, we first observe that recalling formula (\ref{4.11}) 
and notation (\ref{4.18}), from definition (\ref{3.9}) of $\om_1$ we get:
\beqn\label{4.27}
\hskip 0,2truecm
\widehat{\om_1}=[h_{\a,\b}(x)]^{-1}(I_1+I_2),
\eeqn
where
\beqn
&&\hskip -1truecm
I_1:=(6x\what{u_x}+4y\what{u_y})
\big[(x-x_0)y\what{u_x}+|\b|^{-1}[g_{\a,\b}(x)]^\a\what{u_y}\big],\no
\\[2mm]
&&\hskip -1truecm
I_2:=-(y\what{u_x}^2+\what{u_y}^2\big)
\big[3x(x-x_0)+2|\b|^{-1}[g_{\a,\b}(x)]^\a y\big].\no
\eeqn
Expanding $I_1$ and $I_2$, from (\ref{4.27}) we thus find
\beqn
\hskip -0truecm
\widehat{\om_1}
&\!\!\!=\!\!\!&
[h_{\a,\b}(x)]^{-1}
\big\{3x(x-x_0)-2|\b|^{-1}[g_{\a,\b}(x)]^\a y\big\}(y\what{u_x}^2-\what{u_y}^2)\no
\\[1mm]
\hskip -0truecm
&&
+[h_{\a,\b}(x)]^{-1}\big\{6|\b|^{-1}x[g_{\a,\b}(x)]^\a y^{-1/2}+4(x-x_0)y^{3/2}\big\}
y^{1/2}\what{u_x}\,\what{u_y}.\no
\eeqn
Therefore, using $y=g_{\a,\b}(x)$ on $\s_{\a,\b}$ and 
$2|\b|^{-1}[g_{\a,\b}(x)]^{\a+1}=(\a+1)x(2x_0-x)$, 
the latter equality reduces to
\beqn\label{4.28}
\widehat{\om_1}
&\!\!\!=\!\!\!&
[h_{\a,\b}(x)]^{-1}x[(4+\a)x-(5+2\a)x_0](y\what{u_x}^2-\what{u_y}^2)\no
\\[1mm]
\hskip -0truecm
&&
+[h_{\a,\b}(x)]^{-1}\big\{6|\b|^{-1}x[g_{\a,\b}(x)]^{(2\a-1)/2}+4(x-x_0)[g_{\a,\b}(x)]^{3/2}\big\}y^{1/2}\what{u_x}\,\what{u_y}.
\eeqn
Let us rewrite (\ref{4.28}) in a more compact form. 
From now on, for every $\a>0>\b$ and $x\in[2x_0,0]$, 
we denote with $d_\a$ and $\theta_\a$, $\Theta_{\a,\b}$,
$\varphi_{\a,\b}$, $\phi_{\a,\b}$, $\psi_{\a,\b}$ and $\Psi_{\a,\b}$ 
the positive number and the functions defined, respectively, by
\beqn\label{4.29}
&&\hskip -1truecm
d_\a:=\frac{5+2\a}{4+\a},
\q\ 
\theta_{\a}(x):=(4+\a)x(x-d_\a x_0),\q\ 
\Theta_{\a,\b}(x):=[h_{\a,\b}(x)]^{-1}\theta_\a(x),
\\[1mm]
\label{4.30}
&&\hskip -1truecm
\varphi_{\a,\b}(x):=6|\b|^{-1}x[g_{\a,\b}(x)]^{(2\a-1)/2},\q
\phi_{\a,\b}(x):=4(x-x_0)[g_{\a,\b}(x)]^{3/2},
\\[2mm]
\label{4.31}
&&\hskip -1truecm
\psi_{\a,\b}(x):=\varphi_{\a,\b}(x)+\phi_{\a,\b}(x),\q\ 
\Psi_{\a,\b}(x):=[h_{\a,\b}(x)]^{-1}\psi_{\a,\b}(x).
\eeqn
With this notation, we can thus rewrite (\ref{4.28}) as
\beqn\label{4.32}
&&\hskip -1truecm
\widehat{\om_1}=
\Theta_{\a,\b}(x)y\what{u_x}^2+\Psi_{\a,\b}(x)y^{1/2}\what{u_x}\,\what{u_y}
-\Theta_{\a,\b}(x)\what{u_y}^2.
\eeqn
This latter equality suggests us that, in order to estimate 
the integral of $\om_1$ along $\s=\s_{\a,\b}$ in terms
of $\|y^{1/2}u_x\|_{L^2(\s)}$ and $\|u_y\|_{L^2(\s)}$,
we first need to find upper and lower bounds of the functions
$\Theta_{\a,\b}$ and $\Psi_{\a,\b}$ defined in (\ref{4.29}) and (\ref{4.31}).
Since $\Theta_{\a,\b}$ and $\Psi_{\a,\b}$ both depend on the reciprocal of the
positive continuous function $h_{\a,\b}$ defined by (\ref{4.12}), these bounds 
will be easily obtained once we shall be able to determine how 
the least and greatest values of $h_{\a,\b}$ vary with $\a$, $\b$ and $x_0$.  
Notice that, if $\a\in(0,1/2)$, then $\varphi_{\a,\b}$, 
and consequently $\Psi_{\a,\b}$,  blows-up to $-\infty$ at $x=2x_0$. 
However, since in the following we shall need to deal with the case in which
$\partial\Om=\partial\Om_{\a,\b}$ is $D$-starlike, then,
according to Lemma \ref{lem4.2}, we shall be interested in estimating
the function $\Psi_{\a,\b}$ only for $\a\ge1/2$.

To proceed in our analysis, we introduce some further notation.
Throughout the rest of the paper, $c_{\a,\b}$, $M_{\a,\b}$ and $D_{\a,\b}$, 
$\a>0>\b$, $\a\neq 1$, denote the positive numbers
\beqn\label{4.33}
&&\hskip -1truecm
c_{\a,\b}:=2(\a+1)^{-1}\a^{-\g_\a}|\b|^{\g_\a-1},\q \g_\a=(\a+1)/(\a-1),
\\[2mm]
\label{4.34}
&&\hskip -1truecm
M_{\a,\b}:=(c_{\a,\b})^{1/2},\q
D_{\a,\b}:=[2^\a(\a+1)^{-\a}|\b|]^{1/(\a-1)}.
\eeqn
In particular, $M_{\a,\b}$ is smaller than $D_{\a,\b}$. 
Indeed, due to (\ref{4.33}), $M_{\a,\b}<D_{\a,\b}$ is equivalent to
\beqn\label{4.35}
&&\hskip -1truecm
2^{1/2}(\a+1)^{-{1/2}}\a^{-(\a+1)/[2(\a-1)]}|\b|^{1/(\a-1)}
<[2^\a(\a+1)^{-\a}|\b|]^{1/(\a-1)}.
\eeqn
Dividing both sides of (\ref{4.35}) by $|\b|^{1/(\a-1)}$ 
and passing to the logarithm we are led to 
\beqn\no
&&\hskip -1truecm
\frac{\a+1}{2(\a-1)}\ln\Big(\frac{1}{\a}\Big)<
\Big[\frac{\a}{\a-1}-\frac{1}{2}\Big]
\ln\Big(\frac{2}{\a+1}\Big)=\frac{\a+1}{2(\a-1)}\ln\Big(\frac{2}{\a+1}\Big),
\eeqn
and this latter inequality is satisfied for every $\a\in(0,1)\cup(1,+\infty)$. 
\begin{remark}\label{rem4.7}
\emph{For instance, in the case $(\a,\b)=(2,-3/2)$ of the normal Tricomi
curve $g_{\a,\b}=g_T$, from (\ref{4.33}) and (\ref{4.34}) 
we derive $c_{\a,\b}=3/16$, $M_{\a,\b}={\sqrt 3}/4$ and $D_{\a,\b}=2/3$.}
\end{remark}
For $x_0<-M_{\a,\b}$ we then define the points 
$x_{\a,\b,\pm}\in(2x_0,0)$, symmetric with respect to $x=x_0$, by
\beqn\label{4.36}
&&\hskip -0,5truecm
x_{\a,\b;\pm}:=x_0\pm{(x_0^2-c_{\a,\b})}^{1/2},\qq \a>0>\b,\ \a\neq 1.
\eeqn
Observe that, for every fixed $\b<0$, the points $x_{\a,\b,-}$ and $x_{\a,\b,+}$
approach $0$ and $2x_0$, respectively, when $\a\to+\infty$. 
For, from (\ref{4.33}) it follows $c_{\a,\b}\to 0$ as $\a\to+\infty$.
Hence, from the definitions (\ref{4.3}) and (\ref{4.12}) of the functions
$g_{\a,\b}$ and $h_{\a,\b}$ it follows that
\beqn\no
\hskip -1truecm
g_{\a,\b}(x_{\a,\b;\pm})
&\!\!\!=\!\!\!&
[2^{-1}(\a+1)|\b|c_{\a,\b}]^{1/(\a+1)}=(\a^{-1}|\b|)^{1/(\a-1)},
\\[2mm]
\label{4.37}
\hskip -1truecm
h_{\a,\b}(x_{\a,\b;\pm})
&\!\!\!=\!\!\!&
\{x_0^2-c_{\a,\b}+\a^{-2\a/(\a-1)}|\b|^{2/(\a-1)}\}^{1/2}=:C_4(\a,\b,x_0).
\eeqn
Finally, when $x_0<M_{\a,\b}$, we set
\beqn\label{4.38}
C_5(\a,\b,x_0):=\left\{\!
\begin{array}{lll}
\max\{h_{\a,\b}(x_0),h_{\a,\b}(0)\},\q {\rm if}\ \a>1,\ \b<0,
\\[2mm]
\min\{h_{\a,\b}(x_0),h_{\a,\b}(0)\},\q {\rm if}\ \a\in(0,1),\ \b<0.
\end{array}
\right.
\eeqn
Precisely, using $g_{\a,\b}(x_0)=[(\a+1)|\b|x_0^2/2]^{1/(\a+1)}$ and 
comparing the values $h_{\a,\b}(x_0)$ and $h_{\a,\b}(0)$, we find
\beqn\label{4.39}
&&\hskip -1truecm
C_5(\a,\b,x_0)=
\left\{\!
\begin{array}{lll}
h_{\a,\b}(0)=|x_0|,\q{\rm if}\ x_0\in(-D_{\a,\b},-M_{\a,\b}),
\\[2mm]
h_{\a,\b}(x_0)=h_{\a,\b}(0)=|x_0|,\q{\rm if}\ x_0=-D_{\a,\b},
\\[2mm]
h_{\a,\b}(x_0)= |\b|^{-1}[g_{\a,\b}(x_0)]^\a,\q{\rm if}\ x_0<-D_{\a,\b}.
\end{array}
\right.
\eeqn 
\begin{figure}
\begin{center}
\includegraphics[width=1.43in]{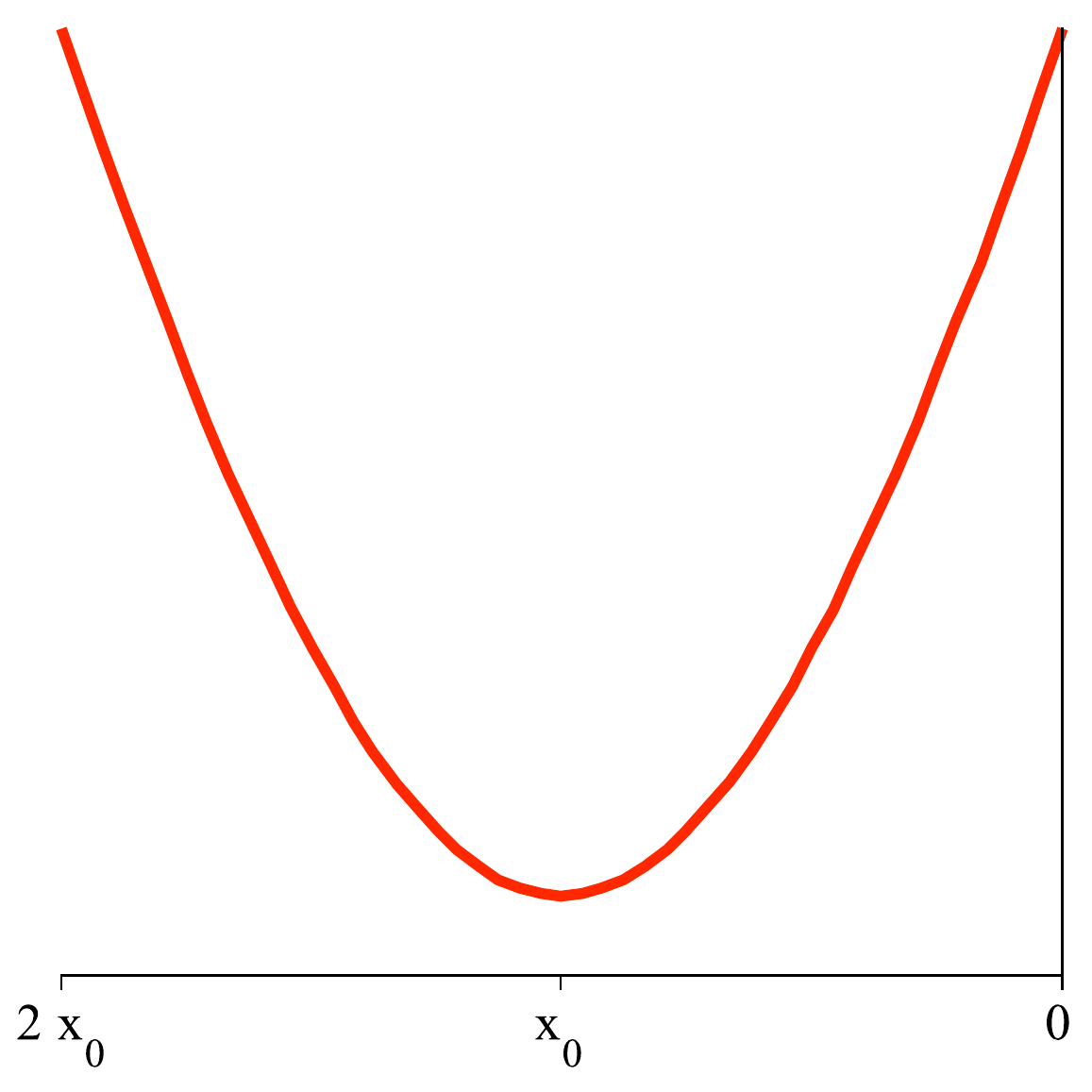}
\q
\includegraphics[width=1.43in]{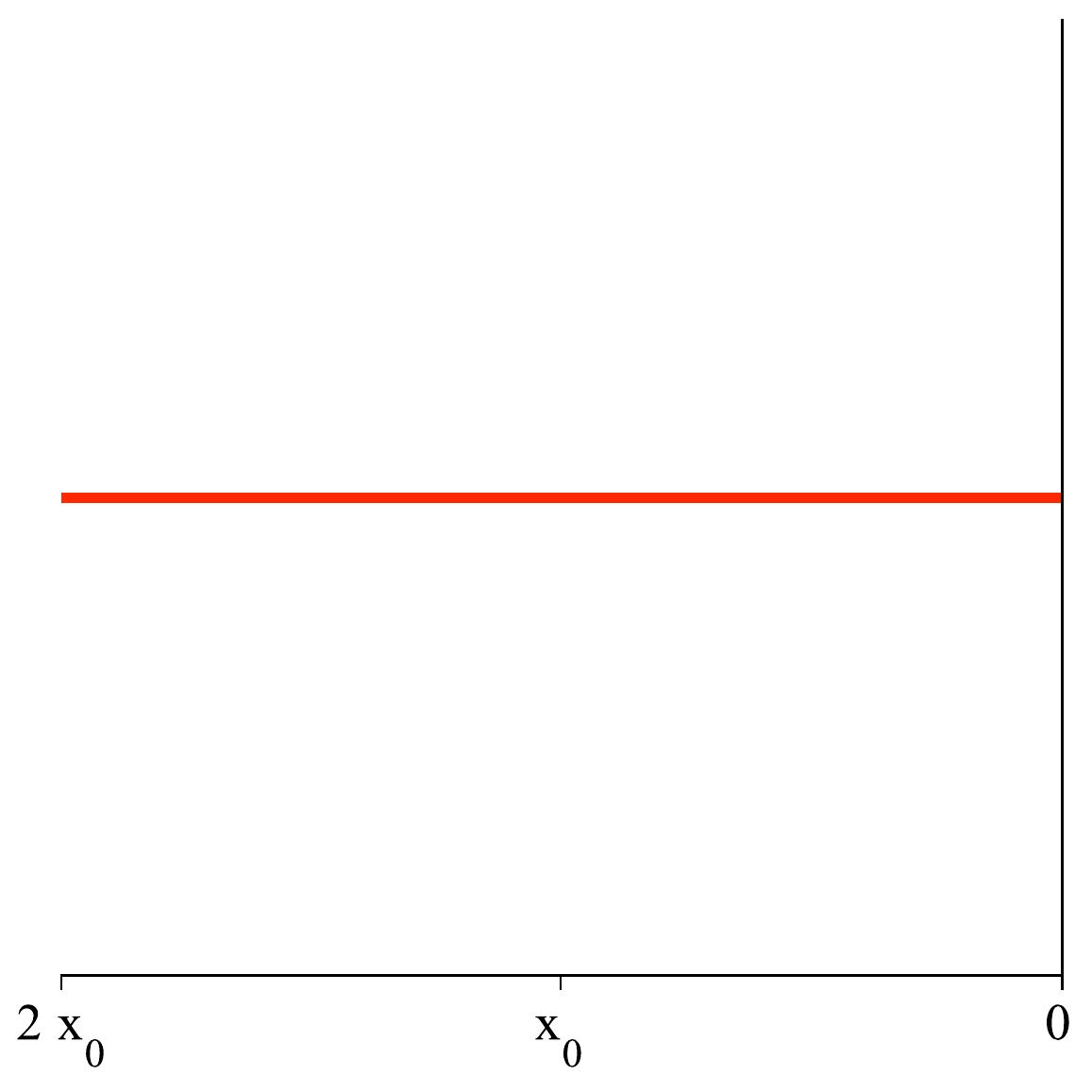}
\q
\includegraphics[width=1.43in]{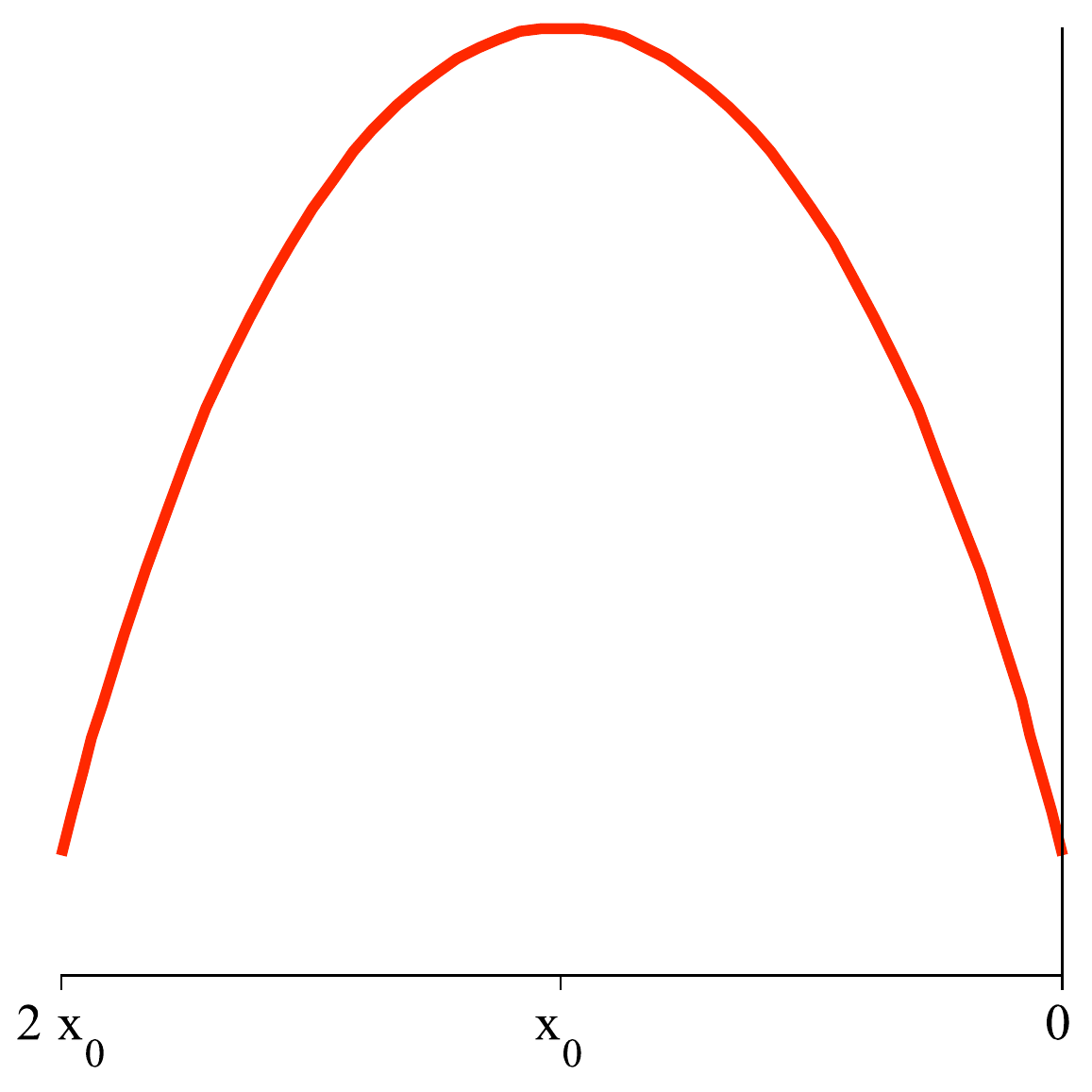}
\caption{{From the left, the function $h_{1,\b}$ 
for $\b<-1$, $\b=-1$ and $\b\in(-1,0)$.}}
\end{center}
\begin{center}
\includegraphics[width=1.43in]{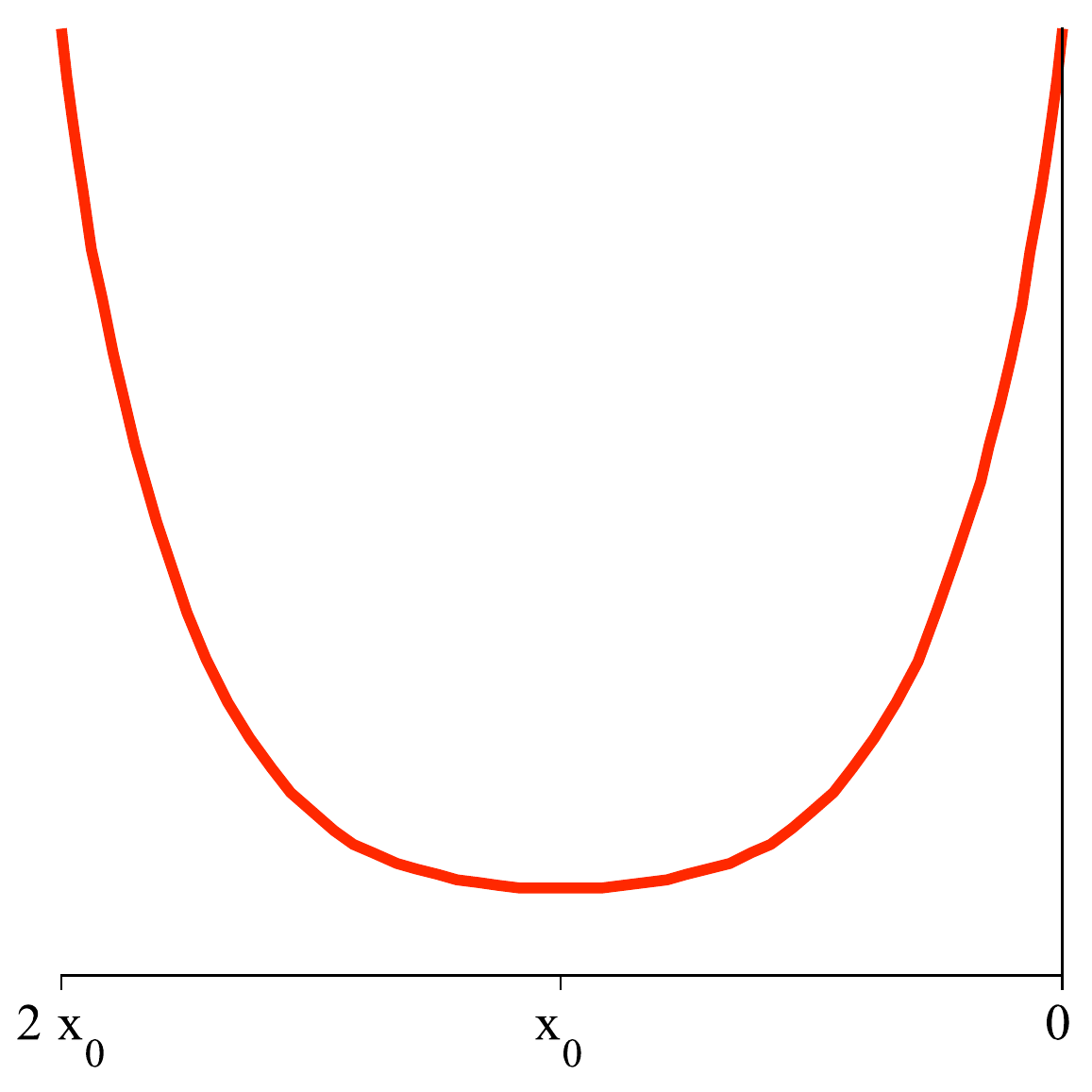}
\q
\includegraphics[width=1.43in]{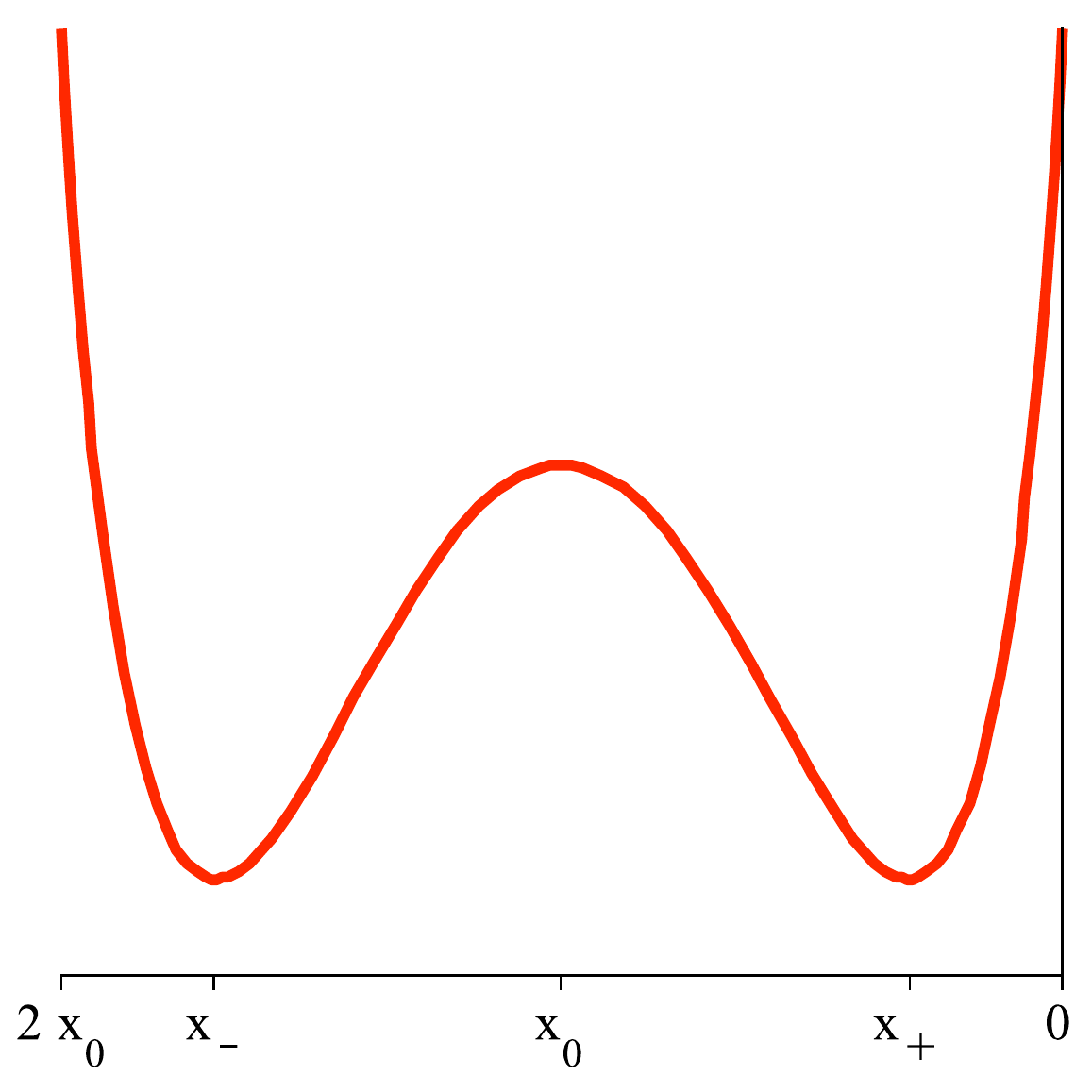}
\q
\includegraphics[width=1.43in]{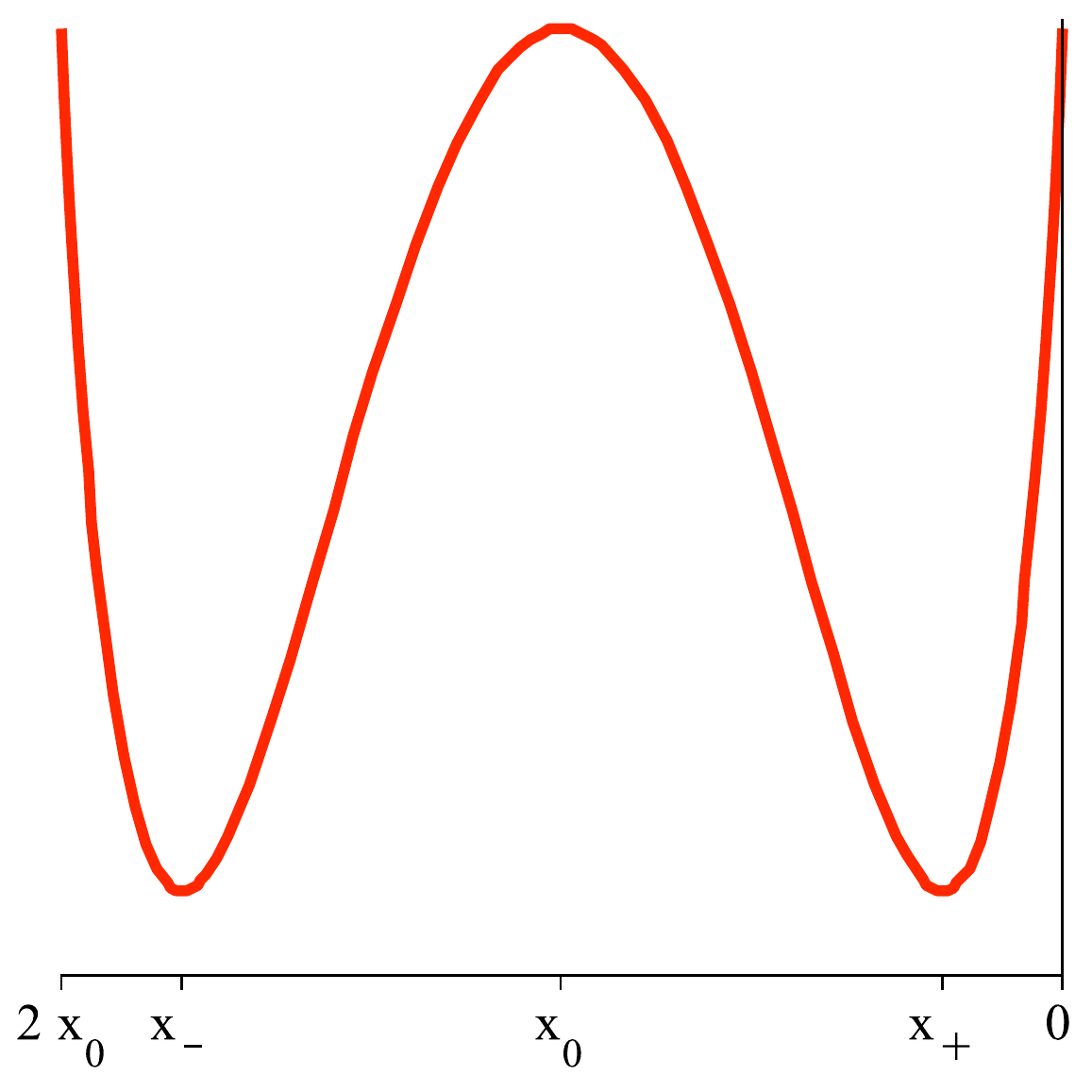}
\q
\includegraphics[width=1.43in]{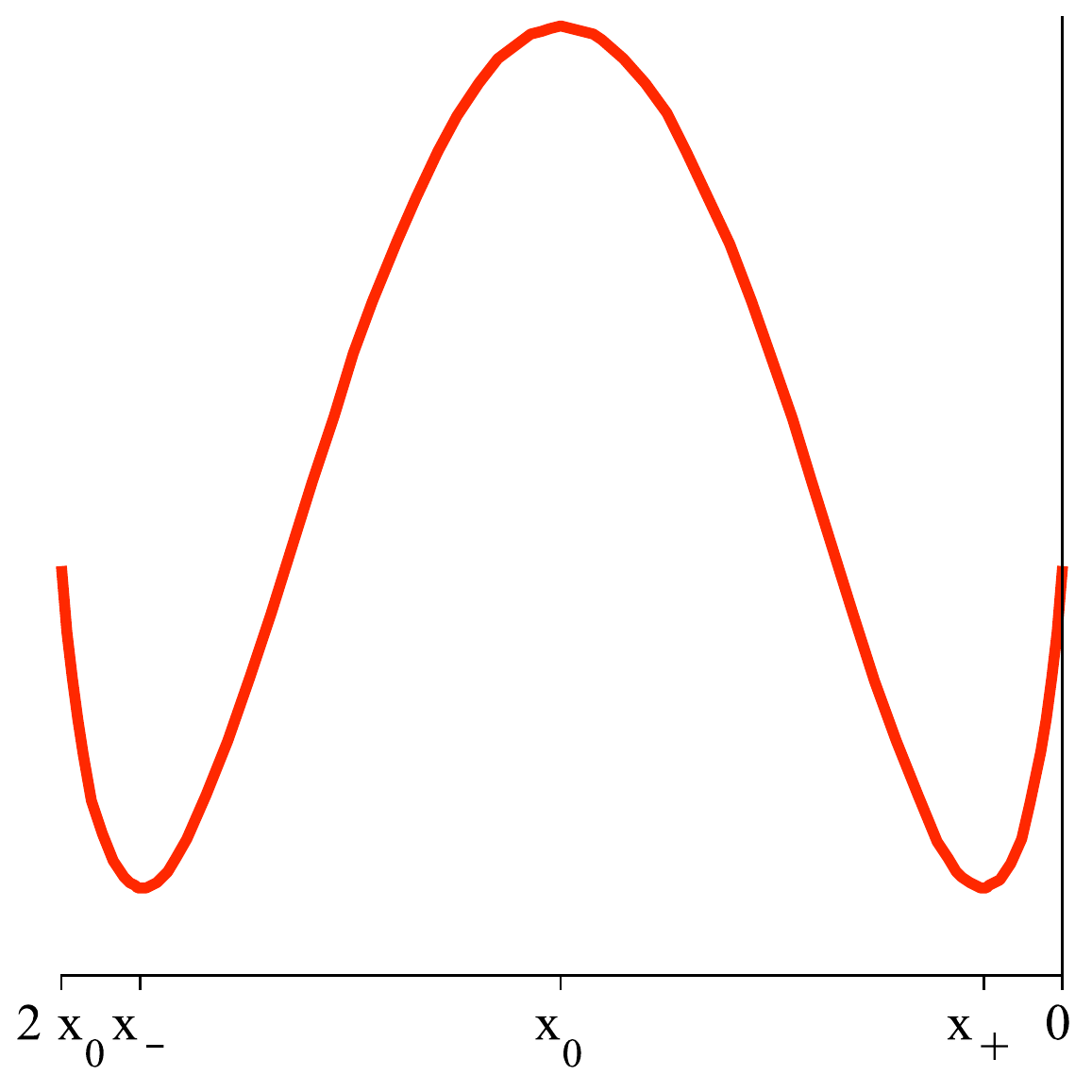}
\caption{From the left, the function $h_{\a,\b}$, $\a>1$, $\b<0$, 
for $x_0\in[-M_{\a,\b},0)$, $x_0\in(-D_{\a,\b},-M_{\a,\b})$, $x_0=-D_{\a,\b}$
and $x_0<-D_{\a,\b}$.}
\end{center}
\begin{center}
\includegraphics[width=1.43in]{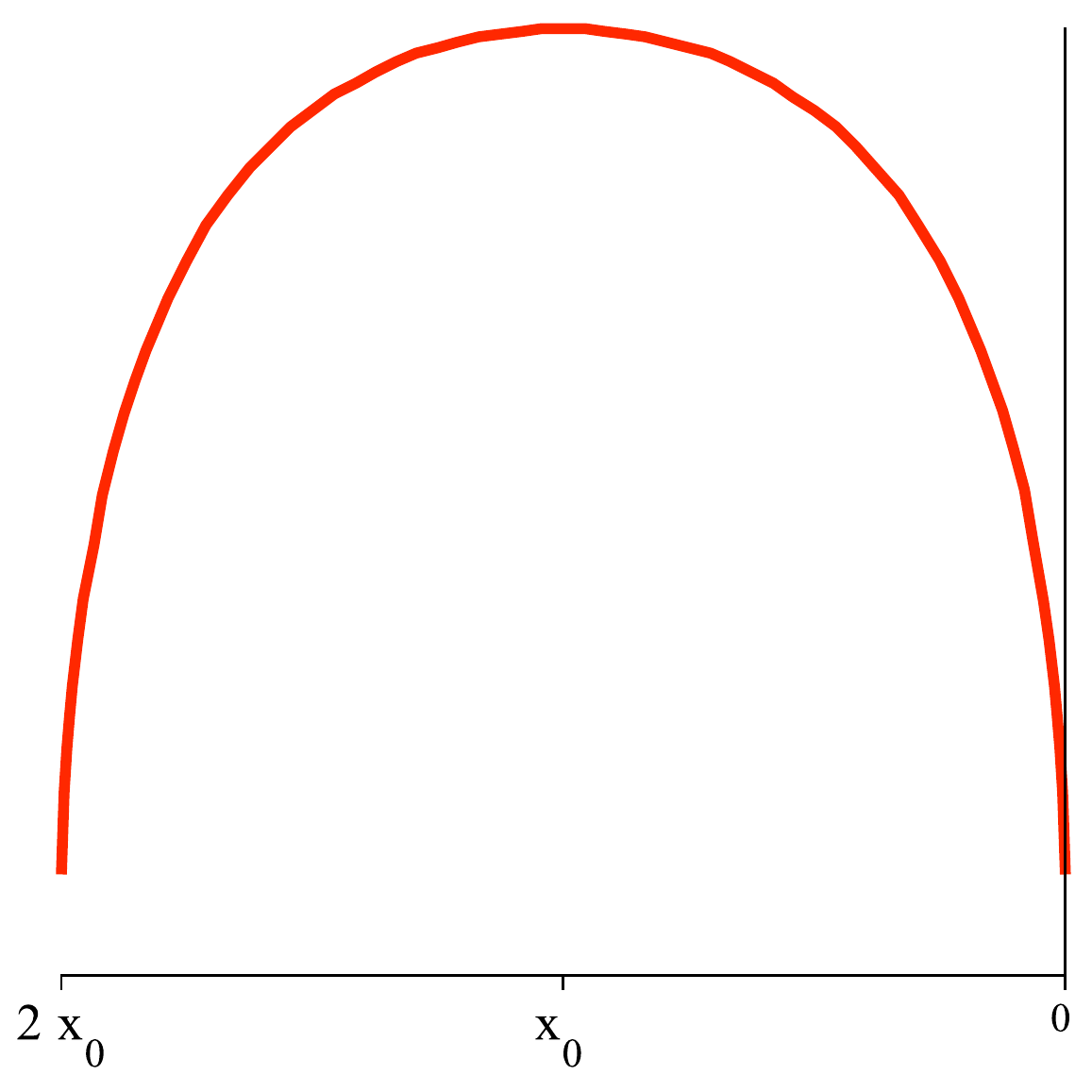}
\q
\includegraphics[width=1.43in]{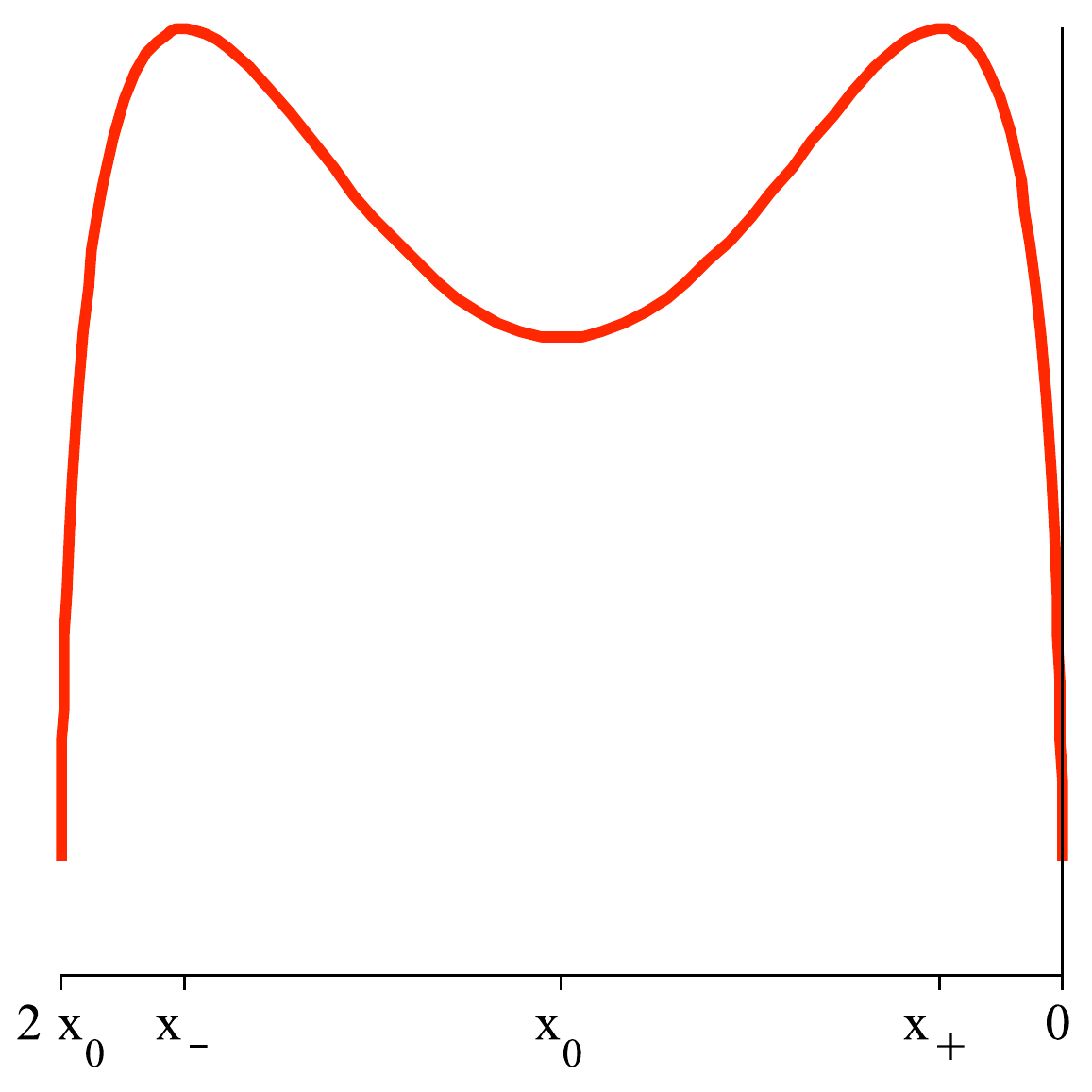}
\q
\includegraphics[width=1.43in]{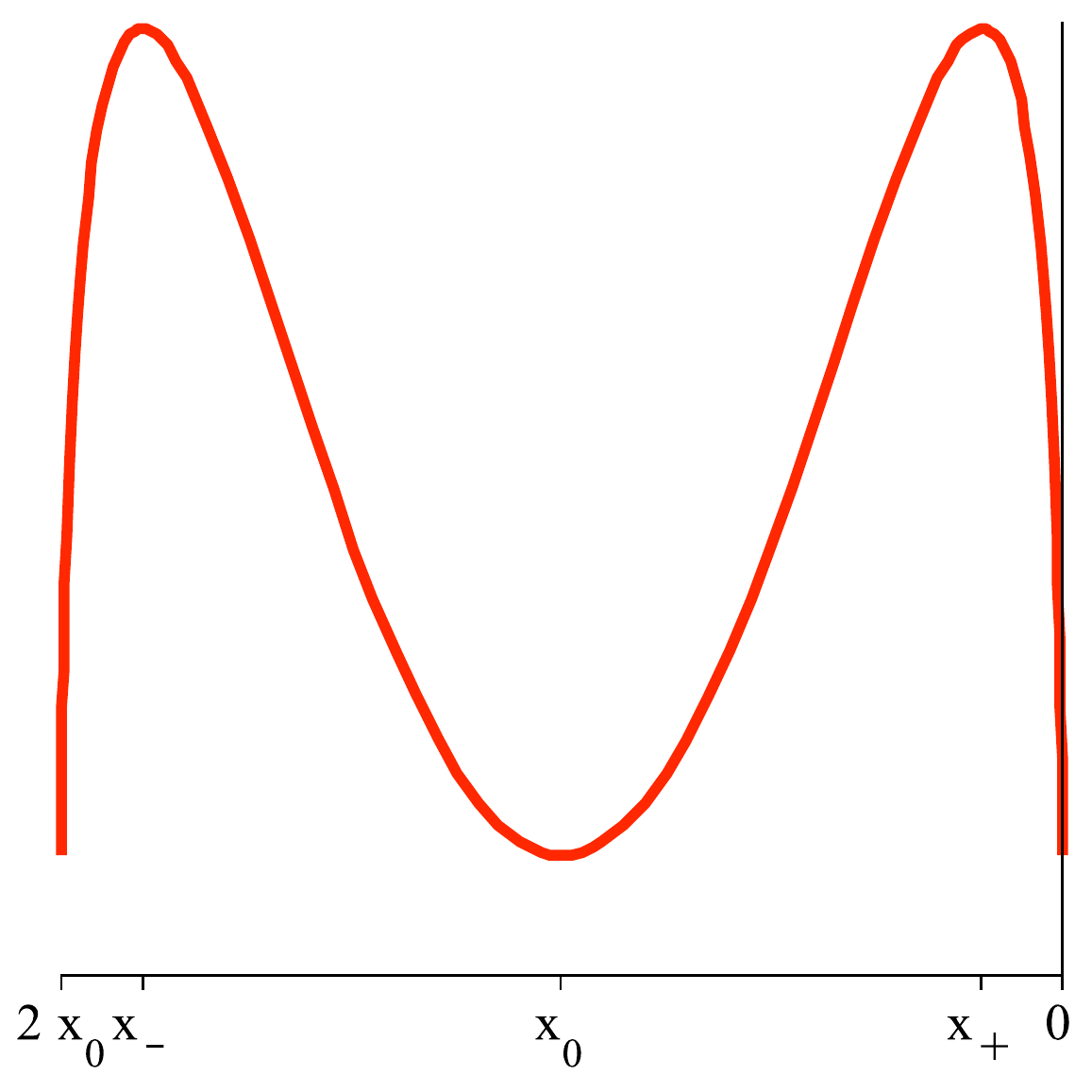}
\q
\includegraphics[width=1.43in]{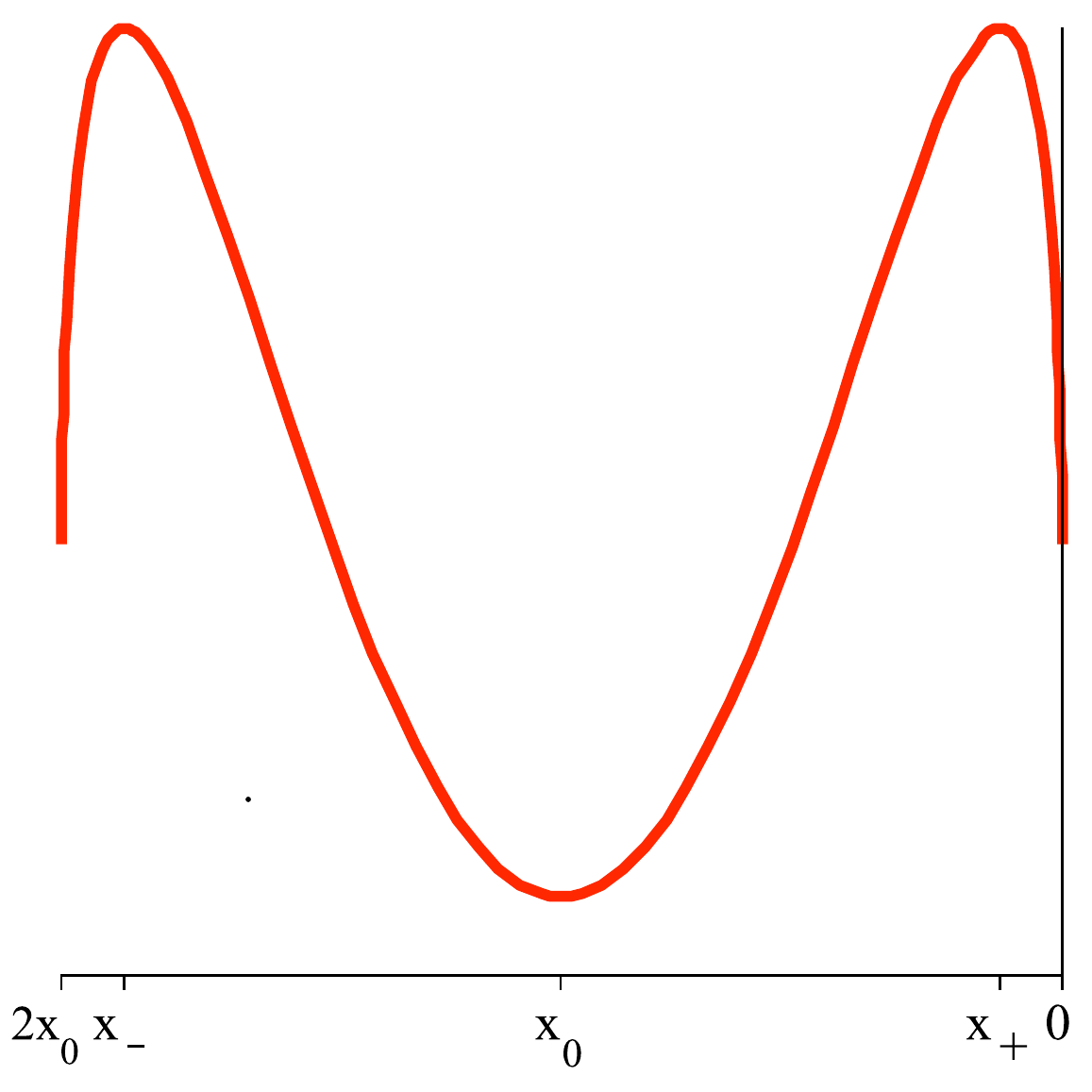}
\caption{From the left, the function $h_{\a,\b}$, $\a\in(0,1)$, $\b<0$, 
for $x_0\in[-M_{\a,\b},0)$, $x_0\in(-D_{\a,\b},-M_{\a,\b})$, $x_0=-D_{\a,\b}$
and $x_0<-D_{\a,\b}$.}
\end{center}
\end{figure}
A detailed analysis of the function $h_{\a,\b}$ 
then yields the next lemma which we shall prove in Section \ref{Sec5}.
\begin{lemma}\label{lem4.8}
Let $h_{\a,\b}$, $\a>0>\b$, be the positive continuous function 
defined by $(\ref{4.12})$ and which is even with respect to the line $\{x=x_0\}$. 
i) Let $\a=1$. If $\b\le-1$ (respectively, $\b\in(-1,0)$), 
then $h_{1,\b}$ is a convex (respectively, concave) function, 
non decreasing (respectively, decreasing)
in the interval $[x_0,0]$. Therefore, for every $x\in[2x_0,0]$ we have
\beqn\label{4.40}
&&\hskip -1,5truecm
\left\{\!\!
\begin{array}{lll}
|\b|^{-1/2}|x_0|=h_{1,\b}(x_0)
\le h_{1,\b}(x)\le h_{1,\b}(0)=|x_0|,\q {\rm if}\ \b\le-1
\\[2mm]
|x_0|=h_{1,\b}(0)
\le h_{1,\b}(x)\le h_{1,\b}(x_0)=|\b|^{-1/2}|x_0|,\q{\rm if}\ \b\in(-1,0).
\end{array}
\right.
\eeqn
ii) Let $\a>1$. 
ii-a) If $x_0\in[-M_{\a,\b},0)$, then $h_{\a,\b}$ is a convex function, 
non decreasing in the interval $[x_0,0]$. Therefore
\beqn\label{4.41}
&&\hskip -1truecm
0< |\b|^{-1}[g_{\a,\b}(x_0)]^\a=h_{\a,\b}(x_0)
\le h_{\a,\b}(x)\le h_{\a,\b}(0)=|x_0|,\q\forall\,x\in[2x_0,0].
\eeqn
ii-b) If $x_0<-M_{\a,\b}$, then 
\beqn\label{4.42}
&&\hskip -1truecm
0<C_4(\a,\b,x_0)\le h_{\a,\b}(x)\le C_5(\a,\b,x_0),\q\forall\,x\in[2x_0,0],
\eeqn
$C_j(\a,\b,x_0)$, $j=4,5$, being defined by $(\ref{4.37})$ and
the first line in $(\ref{4.38})$, respectively. Moreover, 
there exists a unique inflection point $\wtil x_{\a,\b}\in (x_0,x_{\a,\b;+})$
such that $h_{\a,\b}$ is convex in $[2x_0,2x_0-\wtil x_{\a,\b}]\cup[\wtil x_{\a,\b},0]$
and concave in $[2x_0-\wtil x_{\a,\b},\wtil x_{\a,\b}]$.

\noindent
iii) Let $\a\in(0,1)$. 
iii-a) If $x_0\in[-M_{\a,\b},0)$, then $h_{\a,\b}$ is a 
concave function, non increasing in the interval $[x_0,0]$. 
Therefore
\beqn\label{4.43}
&&\hskip -1truecm
0< |x_0|=h_{\a,\b}(0)\le h_{\a,\b}(x)\le 
h_{\a,\b}(x_0)=|\b|^{-1}[g_{\a,\b}(x_0)]^\a,\q\forall\,x\in[2x_0,0].
\eeqn
iii-b) If $x_0<-M_{\a,\b}$, then 
\beqn\label{4.44}
&&\hskip -1truecm
0<C_5(\a,\b,x_0)\le h_{\a,\b}(x)\le C_4(\a,\b,x_0),\q\forall\,x\in[2x_0,0],
\eeqn
$C_j(\a,\b,x_0)$, $j=4,5$, being defined by $(\ref{4.37})$ and
the second line in $(\ref{4.38})$, respectively.
Moreover, there exists a unique inflection point 
$\ov x_{\a,\b}\in (x_0,x_{\a,\b;+})$
such that $h_{\a,\b}$ is concave in 
$[2x_0,2x_0-\ov x_{\a,\b}]\cup[\ov x_{\a,\b},0]$
and convex in $[2x_0-\ov x_{\a,\b},\ov x_{\a,\b}]$.
\end{lemma}
With the help of Lemma \ref{lem4.8} we can now find upper and lower bound
for the functions $\Theta_{\a,\b}$ and $\Psi_{\a,\b}$ in (\ref{4.32}).
As for Lemma \ref{lem4.8}, the proofs of the forthcoming 
Corollary \ref{cor4.10}, Lemma \ref{lem4.13} and Corollary \ref{cor4.16} 
will be given in Section \ref{Sec5}.
In the sequel, $d_\a$ being defined in (\ref{4.29}),
$E_{\a,\b}$ and $L_{\a,\b}$, $\a>0>\b$, $\a\neq 1$, 
denote the positive numbers
\beqn\no
&&\hskip -1truecm
E_{\a,\b}:=\frac{M_{\a,\b}}{[d_\a(2-d_\a)]^{1/2}},\qq
L_{\a,\b}:=\frac{D_{\a,\b}}{[d_\a(2-d_\a)]^{1/2}}.
\eeqn
Of course, since $d_\a\in(5/4,2)$, the numbers $E_{\a,\b}$ and $L_{\a,\b}$
are greater than $M_{\a,\b}$ and $D_{\a,\b}$, respectively.
Also, in the case $\a\in(0,1)$, it is easy to see that $E_{\a,\b}$ 
is smaller than $D_{\a,\b}$. In fact,  due to (\ref{4.33}) and (\ref{4.34}), 
the inequality $D_{\a,\b}\le E_{\a,\b}$, $\a\in(0,1)$, is equivalent to
\beqn\no
&&\hskip -1truecm
[2^\a(\a+1)^{-\a}]^{1/(\a-1)}\le
2^{1/2}(\a+1)^{-{1/2}}\a^{-(\a+1)/[2(\a-1)]}[d_\a(2-d_\a)]^{-1/2},\q \a\in(0,1).
\eeqn
Using $d_\a(2-d_a)=3(5+2\a)/(4+\a)^2$ and passing to the logarithm,
the latter inequality reduces to 
\beqn\label{4.45}
\mu(\a):=\frac{\a+1}{1-\a}\ln\Big(\frac{\a+1}{2\a}\Big)\le
\ln\Big[\frac{(4+\a)^2}{3(5+2\a)}\Big]=:\nu(\a),\q\a\in(0,1).
\eeqn
Now, standard computations show that the function $\mu(\a)$
on the left-hand side of (\ref{4.45}) is decreasing in $(0,1)$
and satisfies $\lim_{\a\to 0^+}\mu(a)=+\infty$ and $\lim_{\a\to 1^-}\mu(\a)=1$,
so that $1<\mu(\a)$ for every $\a\in(0,1)$. On the contrary, the function
$\nu(\a)$ on the right-hand side of (\ref{4.45}) is increasing in $(0,1)$,
so that $\ln(16/15)=\nu(0)<\nu(\a)<\nu(1)=\ln(25/21)$, $\a\in(0,1)$.
Since $\ln(25/21)<1$, this yields $\nu(\a)<\mu(\a)$, 
which contradicts (\ref{4.45}) and proves $E_{\a,\b}<D_{\a,\b}$, $\a\in(0,1)$.
\begin{remark}\label{rem4.9}
\emph{In the case $(\a,\b)=(2,-3/2)$ of the normal Tricomi
curve we have $d_\a=3/2$ and $E_{\a,\b}=1/2$. 
This suggests that, since (see Remark \ref{rem4.7}) $D_{2,-3/2}=2/3$, 
the inequality $E_{\a,\b}<D_{\a,\b}$, $\a\neq 1$, 
is satisfied in a real range larger than $\a\in(0,1)$.
However, in the following we shall not need to know until when the inequality
$E_{\a,\b}<D_{\a,\b}$ holds true in the case $\a>1$.} 
\end{remark}
We now specified two points of the interval $(2x_0,0)$ 
that appear in the upper and lower bounds of the function $\Theta_{\a,\b}$
given in the next Corollary \ref{cor4.10}.
For every $\a>0$ we set
\beqn\label{4.46}
&&\hskip -1truecm
x_\a:=d_\a x_0\in(2x_0,x_0),\qq \wtil x_\a:=x_\a/2\in(x_0,x_0/2).
\eeqn
In particular, the function $\theta_\a$ in (\ref{4.29}) rewrites as 
$\theta_{\a}(x):=(4+\a)x(x-x_\a)$, so it is
positive in $[2x_0,x_\a)$ and negative in $(x_\a,0)$. 
Observe that, if $x_0<-M_{\a,\b}$, then it holds
$x_\a<x_{\a,\b;-}$ or $x_{\a,\b;-}\le x_\a$ according to the case 
$x_0\in(-E_{\a,\b},-M_{\a,\b})$ or $x_0\le-E_{\a,\b}$, respectively.
Therefore, since in Corollary \ref{cor4.10} 
we are interested in the least value of the function $h_{\a,\b}$ 
in the interval $[2x_0,x_\a]$, 
from Lemma \ref{lem4.8} we see that the
only ambiguous case is $x_0<-D_{\a,\b}$, $\a\in(0,1)$.
(see also Figures 2 and 3). In fact, due to what observed before,
if $x_0<-D_{\a,\b}$, $\a\in(0,1)$, then $x_0<-E_{\a,\b}$ 
and $x_\a$ belongs to the interval $(x_{\a,\b;-},x_0)$, 
in which $h_{\a,\b}$ decreases. Thus, to determine the least value 
of $h_{\a,\b}$ in $[2x_0,x_\a]$ we have to compare the values 
$h_{\a,\b}(2x_0)$ and $h_{\a,\b}(x_\a)$. But, if $\a\in(0,1)$, then
\beqn\label{4.47}
\hskip -1truecm
h_{\a,\b}(x_\a)
&\!\!\!=\!\!\!&
\big\{(d_\a-1)^2|x_0|^2
+\b^{-2}\big[(\a+1)|\b|d_\a(2-d_\a)\big]^{2\a/(\a+1)}|x_0|^{4\a/(\a+1)}\big\}^{1/2}\no
\\[2mm]
\hskip -1truecm
&\!\!\!=\!\!\!&
\big\{(d_\a-1)^2|x_0|^{-2/\g_\a}
+\b^{-2}\big[(\a+1)|\b|d_\a(2-d_\a)\big]^{2\a/(\a+1)}\big\}^{1/2}|x_0|^{2\a/(\a+1)}.
\eeqn
where $\g_\a=(\a+1)/(\a-1)$. Comparing (\ref{4.47}) with
$h_{\a,\b}(2x_0)=|x_0|$, an easy computation shows that
\beqn\label{4.48}
&&\hskip -1,5truecm
\min_{\b<0<\a<1}\{h_{\a,\b}(2x_0),h_{\a,\b}(x_\a)\}:=
\left\{\!
\begin{array}{lll}
h_{\a,\b}(2x_0),\q{\rm if}\ x_0\in(-L_{\a,\b},-D_{\a,\b}),
\\[2mm]
h_{\a,\b}(x_\a)=h_{\a,\b}(2x_0),\q{\rm if}\ x_0=-L_{\a,\b},
\\[2mm]
h_{\a,\b}(x_\a),\q{\rm if}\ x_0<-L_{\a,\b}.
\end{array}
\right.
\eeqn
\begin{corollary}\label{cor4.10}
Let $\Theta_{\a,\b}$, $\a>0>\b$, be the function defined in $(\ref{4.29})$
and let $x_{\a,\b;\pm}$, $x_\a$ and $\wtil x_\a$ 
be the points in $(\ref{4.36})$ and $(\ref{4.46})$.
{\it i)} Let $\a=1$. Then
\beqn\label{4.49}
&&\hskip -2truecm
C_6(\b,x_0)\le \Theta_{1,\b}(x)\le C_7(\b,x_0),\q\forall\,x\in[2x_0,0],
\eeqn
where
\beqn\label{4.50}
\hskip -1truecm
0>C_6(\b,x_0)
&\!\!\!\!:=\!\!\!&
\left\{\!
\begin{array}{lll}
[h_{1,\b}(x_0)]^{-1}\theta_1(\wtil x_1),\q{\rm if}\ \b\le-1,
\\[2mm]
[h_{1,\b}(0)]^{-1}\theta_1(\wtil x_1),\q{\rm if}\ \b\in(-1,0),
\end{array}
\right.
\\[2mm]
\label{4.51}
\hskip -1truecm
0<C_7(\b,x_0)
&\!\!\!\!:=\!\!\!&
\left\{\!
\begin{array}{lll}
[h_{1,\b}(x_1)]^{-1}\theta_1(2x_0),
\q{\rm if}\ \b\le-1,
\\[2mm]
\Theta_{1,\b}(2x_0),\q{\rm if}\ \b\in(-1,0),
\end{array}
\right.
\eeqn
{\it ii)} Let $\a>1$. Then
\beqn\label{4.52}
&&\hskip -1truecm
C_8(\a,\b,x_0)\le \Theta_{\a,\b}(x)\le C_9(\a,\b,x_0),\q\forall\,x\in[2x_0,0],
\eeqn
where
\beqn\label{4.53}
\hskip -1truecm
0>C_8(\a,\b,x_0)
&\!\!\!\!:=\!\!\!&
\left\{\!
\begin{array}{lll}
[h_{\a,\b}(x_0)]^{-1}\theta_\a(\wtil x_\a),\q{\rm if}\ x_0\in[-M_{\a,\b},0),
\\[2mm]
[h_{\a,\b}(x_{\a,\b;+})]^{-1}\theta_\a(\wtil x_\a),\q{\rm if}\ x_0<-M_{\a,\b},
\end{array}
\right.
\\[2mm]
\label{4.54}
\hskip -1truecm
0<C_9(\a,\b,x_0)
&\!\!\!\!:=\!\!\!&
\left\{\!
\begin{array}{lll}
[h_{\a,\b}(x_\a)]^{-1}\theta_\a(2x_0),\q{\rm if}\ x_0\in(-E_{\a,\b},0),
\\[2mm]
[h_{\a,\b}(x_{\a,\b;-})]^{-1}\theta_\a(2x_0),\q{\rm if}\ x_0\le-E_{\a,\b}.
\end{array}
\right.
\eeqn
{\it iii)} Let $\a\in(0,1)$. Then
\beqn\label{4.55}
&&\hskip -1truecm
C_{10}(\a,\b,x_0)\le \Theta_{\a,\b}(x)\le C_{11}(\a,\b,x_0),\q\forall\,x\in[2x_0,0],
\eeqn
where
\beqn\label{4.56}
\hskip -0,5truecm
0>C_{10}(\a,\b,x_0)
&\!\!\!\!:=\!\!\!&
\left\{\!
\begin{array}{lll}
[h_{\a,\b}(0)]^{-1}\theta_\a(\wtil x_\a),\q{\rm if}\ x_0\in[-D_{\a,\b},0),
\\[2mm]
[h_{\a,\b}(x_0)]^{-1}\theta_\a(\wtil x_\a),\q{\rm if}\ x_0<-D_{\a,\b},
\end{array}
\right.
\\[2mm]
\label{4.57}
\hskip -0,5truecm
0<C_{11}(\a,\b,x_0)
&\!\!\!\!:=\!\!\!&
\left\{\!\!
\begin{array}{lll}
\Theta_{\a,\b}(2x_0),\q{\rm if}\ x_0\in[-L_{\a,\b},0),
\\[2mm]
[h_{\a,\b}(x_\a)]^{-1}\theta_\a(2x_0),\q{\rm if}\ x_0<-L_{\a,\b}.
\end{array}
\right.
\eeqn
\end{corollary}
\begin{remark}\label{rem4.11}
\emph{For the reader's convenience and to exhibit 
how they depend on $x_0$, we report here the values
of the constant $C_j(\b,x_0)$, $j=6,7$, and $C_j(\a,\b,x_0)$, $j=8,\ldots,11$,
in Corollary \ref{cor4.10}.
First, observe that $\theta_\a(2x_0)=6|x_0|^2$ for every $\a>0$
and recall that $h_{\a,\b}(2x_0)=h_{\a,\b}(0)=|x_0|$ for every $\a>0>\b$.
Now, since $d_1=7/5$, by evaluating 
$\theta_1(\wtil x_1)$ and $h_{1,\b}(x_1)$ and using
$h_{1,\b}(x_0)=|\b|^{-1/2}|x_0|$, we find
\beqn\no
&&\hskip -1truecm
C_6(\b,x_0)=\left\{\!
\begin{array}{lll}
-\ds\frac{49|\b|^{1/2}|x_0|}{20},\q {\rm if}\ \b\le-1,
\\[3mm]
-\ds\frac{49|x_0|}{20},\q {\rm if}\ \b\in(-1,0),
\end{array}
\right.
\q
C_7(\b,x_0)=\left\{\!
\begin{array}{lll}
\ds\frac{30|\b|^{1/2}|x_0|}{[21+4|\b|]^{1/2}},\q {\rm if}\ \b\le-1,
\\[3mm]
6|x_0|,\q {\rm if}\ \b\in(-1,0).\no
\end{array}
\right.
\eeqn
Instead, when $\a>1$ and $C_4(\a,\b,x_0)$ is defined by (\ref{4.37}), we have
\beqn\no
&&\hskip -1truecm
C_8(\a,\b,x_0)=\left\{\!
\begin{array}{lll}
-c_8(\a,\b)|x_0|^{2/(\a+1)},\q{\rm if}\ x_0\in[-M_{\a,\b},0),
\\[2mm]
\ds-\frac{(4+\a)d_\a^2|x_0|^2}{4C_4(\a,\b,x_0)},\q{\rm if}\ x_0<-M_{\a,\b},
\end{array}
\right.
\\[2mm]
&&\hskip -1truecm
C_9(\a,\b,x_0)=
\left\{\!\!
\begin{array}{lll}
c_9(\a,\b,x_0)|x_0|,\q {\rm if}\ x_0\in(-E_{\a,\b},0),
\\[1mm]
\ds\frac{6|x_0|^2}{C_4(\a,\b,x_0)},\q {\rm if}\ x_0\le-E_{\a,\b}. 
\end{array}
\right.\no
\eeqn
Here
\beqn\no
\hskip -0,5truecm
c_8(\a,\b)
&\!\!\!\!:=\!\!\!&
(4+\a)|\b|\big[(\a+1)|\b|/2\big]^{-\a/(\a+1)}(d_\a/2)^2,
\\[2mm]
c_9(\a,\b,x_0)
&\!\!\!\!:=\!\!\!&
6\big\{(d_\a-1)^2+\b^{-2}\big[(\a+1)|\b|d_\a(2-d_\a)/2\big]^{2\a/(\a+1)}
|x_0|^{2/\g_\a}\big\}^{-1/2},\no
\eeqn
where $\g_\a=(\a+1)/(\a-1)$. 
Finally, if $\a\in(0,1)$, then using (\ref{4.47}) we obtain
\beqn\no
&&\hskip -1truecm
C_{10}(\a,\b,x_0)=
\left\{\!\!
\begin{array}{lll}
-(4+\a)(d_\a/2)^2|x_0|,\q {\rm if}\ x_0\in[-D_{\a,\b},0),
\\[1mm]
-c_{10}(\a,\b)|x_0|^{2/(\a+1)},\q {\rm if}\ x_0<-D_{\a,\b},
\end{array}
\right.\no
\\[2mm]
&&\hskip -1truecm
C_{11}(\a,\b,x_0)=
\left\{\!\!
\begin{array}{lll}
6|x_0|,\q {\rm if}\ x_0\in[-L_{\a,\b},0),
\\[1mm]
c_{11}(\a,\b,x_0)|x_0|^{2/(\a+1)},\q {\rm if}\ x_0<-L_{\a,\b}, 
\end{array}
\right.\no
\eeqn
where $c_{10}(\a,\b)=c_8(\a,\b)$ and
\beqn\no
&&\hskip -1truecm
c_{11}(\a,\b,x_0):=
6\big\{(d_\a-1)^2|x_0|^{-2/\g_\a}
+\b^{-2}\big[(\a+1)|\b|d_\a(2-d_\a)\big]^{2\a/(\a+1)}\big\}^{-1/2}.
\eeqn}
\end{remark}
\begin{remark}\label{rem4.12}
\emph{Notice that, when $x_0<-M_{\a,\b}$, one has $x_{\a,\b;+}=\wtil x_\a$
for $x_0=-\wtil E_{\a,\b}$, where 
$\wtil E_{\a,\b}:=\{d_\a[1-(d_\a/4)]\}^{-1/2}M_{\a,\b}>M_{\a,\b}$.
In particular, if $d_\a\in(4/3,2)$, then $\wtil E_{\a,\b}\in(M_{\a,\b},E_{\a,\b})$.
Therefore, since $d_\a\in(4/3,2)$ for every $\a>1/2$,
when $\a>1$ and $x_0=-\wtil E_{\a,\b}$ the estimate from
below in (\ref{4.52}) becomes $\Theta_{\a,\b}(x_{\a,\b;+})\le \Theta_{\a,\b}(x)$,
which is sharp. On the contrary, when $x_0<-E_{\a,\b}$ is large enough, 
$x_{\a,\b;+}$ and $x_{\a,\b;-}$ approach $0$ and $2x_0$, respectively. 
This implies that, for $x_0<-E_{\a,\b}$ large enough, 
while the lower bound in (\ref{4.52}) approaches 
$[h_{\a,\b}(0)]^{-1}\theta_\a(\wtil x_\a)\le\Theta_{\a,\b}(x)$ 
and becomes less precise, the upper bound approaches 
$\Theta_{\a,\b}(x)\le\Theta_{\a,\b}(2x_0)$ and becomes more accurate.}
\end{remark}
We now specify some other points which appear in Lemma \ref{lem4.13} below, 
where, under the assumption $\a\ge 1/2$,
we find upper and lower bounds for the function $\Psi_{\a,\b}$ defined in (\ref{4.31}). For every $\a\ge1/2$ and $\b<0$ we set
\beqn
\label{4.58}
&&\hskip -1truecm
\ov x_\a:=\frac{(4\a+1)x_0}{3\a},\qq
\ov x_{\a;\pm}:=x_0\pm\Big(\frac{\a+1}{\a+4}\Big)^{1/2}|x_0|,
\\[2mm]
\label{4.59}
&&\hskip -1truecm
x_{\b;\pm}:=\frac{3x_0}{2}\pm\frac{(8+|\b|^2x_0^2)^{1/2}}{2|\b|},\qq
\wtil x_{\b;\pm}:=x_0\pm\frac{(6+3|\b|^2x_0^2)^{1/2}}{3|\b|},
\\[2mm]
\label{4.60}
&&\hskip -1truecm
x_\b=\frac{2|\b|x_0}{3+2|\b|},\qq
\widehat x_{\b;\pm}:=\frac{(9+8|\b|)x_0\pm[8(3+2|\b|)^2+9]^{1/2}|x_0|}{4(3+2|\b|)}.
\eeqn
Observe that $x_{\b;-}<2x_0$, $x_{\b;+}>x_0$ and 
$\wtil x_{\b;-}<x_0<\wtil x_{\b;+}$ for every $\b<0$,
but the points $x_{\b;+}$, $\wtil x_{\b;+}$ and $\wtil x_{\b;-}$
belong to $(x_0,0)$ and $(2x_0,x_0)$, respectively, only if $x_0<-|\b|^{-1}$.
Otherwise, if $x_0\in[-|\b|^{-1},0)$, then $x_{\b;+}\ge 0$, $\wtil x_{\b;+}\ge 0$
and $\wtil x_{\b;-}\le 2x_0$. 
Instead, for every $\a\ge1/2$ and $\b<0$, it is easy to see that the points 
$\ov x_\a$, $\ov x_{\a;\pm}$, $x_\b$ and $\widehat x_{\b;\pm}$
satisfy $\ov x_\a\in[2x_0,x_0)$, $\ov x_{\a;-},\widehat x_{\b;-}\in(2x_0,x_0)$
and  $\ov x_{\a;+}, x_\b, \widehat x_{\b;+}\in(x_0,0)$, with $x_\b<\widehat x_{\b;+}$.

Finally, for every $\b<0$ we define the positive number $R_\b$ by
\beqn\label{4.61}
&&\hskip -1truecm
R_\b:=\frac{(3+2|\b|)|\b|^{1/2}}{4[3(3+|\b|)]^{1/2}}.
\eeqn
Since the number $R_\b$ appears in the case $\a=2$ of 
Lemma \ref{lem4.13}, we observe here that $R_\b$ 
is greater than $M_{2,\b}=|\b|/(2\sqrt 3)$ for every $\b<0$. 
Also, using $c_{2,\b}=|\b|^2/12$, an easy computation shows 
that the point $x_{2,\b;+}$ defined by (\ref{4.36}) 
is related to the point $x_\b$ in (\ref{4.60})
by $x_{2,\b;+}<x_\b$ or $x_\b\le x_{2,\b;+}$ according to the case
$x_0\in(-R_\b,-M_{2,\b})$ or $x_0\le -R_\b$.
\begin{lemma}\label{lem4.13}
Let $\Psi_{\a,\b}$, $\a\ge1/2$, $\b<0$, be the function defined in $(\ref{4.31})$
and let $x_{\a,\b;\pm}$, $\ov x_\a$, $\ov x_{\a;\pm}$, $x_{\b;+}$, 
$\wtil x_{\b;\pm}$, $x_\b$ and $\widehat x_{\b;\pm}$ be the points in 
$(\ref{4.36})$ and $(\ref{4.58})-(\ref{4.60})$.
i) Let $\a=1/2$. {\it i-a)} If $x_0\in[-|\b|^{-1},0)$, then
\beqn\label{4.62}
&&\hskip -1truecm
C_{12}(\b,x_0)\le \Psi_{1/2,\b}(x)\le C_{13}(\b,x_0):=0,\q\forall\,x\in[2x_0,0].
\eeqn
Here, $D_{1/2,\b}$ being equal to $(3/4)|\b|^{-2}$, we have set
\beqn\label{4.63}
&&\hskip -1truecm
0>C_{12}(\b,x_0):=\left\{
\begin{array}{lll}
\Psi_{1/2,\b}(2x_0),\q{\rm if}\ \b\in[-3/4,0),
\\[2mm]
[h_{1/2,\b}(x_0)]^{-1}\psi_{1/2,\b}(2x_0),\q {\rm if}\ \b<-3/4,
\ x_0\in[-|\b|^{-1},-D_{1/2,\b}),
\\[2mm]
\Psi_{1/2,\b}(2x_0),\q{\rm if}\ \b<-3/4,\ x_0\in[-D_{1/2,\b},0).
\end{array}
\right.
\eeqn
{\it i-b)} Let $x_0<-|\b|^{-1}$. Then
\beqn\label{4.64}
&&\hskip -1truecm
C_{14}(\b,x_0)\le \Psi_{1/2,\b}(x)\le C_{15}(\b,x_0),\q\forall\,x\in[2x_0,0],
\eeqn
where, $c_{15}(\b,x_0)$ standing for $\min\{h_{1/2,\b}(x_{\b;+}),h_{1/2,\b}(0)\}$,
we have set
\beqn\label{4.65}
\hskip -0,5truecm
0>C_{14}(\b,x_0)
&\!\!\!\!:=\!\!\!&
\left\{\!
\begin{array}{lll}
[h_{1/2,\b}(2x_0)]^{-1}\psi_{1/2,\b}(\wtil x_{\b;-}),
\q{\rm if}\ \b\in(-3/4,0),\ x_0\in[-D_{1/2,\b},-|\b|^{-1}),
\\[2mm]
[h_{1/2,\b}(x_0)]^{-1}\psi_{1/2,\b}(\wtil x_{\b;-}),
\q{\rm if}\ \b\in(-3/4,0),\ x_0<-D_{1/2,\b},
\\[2mm]
[h_{1/2,\b}(x_0)]^{-1}\psi_{1/2,\b}(\wtil x_{\b;-}),
\q{\rm if}\ \b\le-3/4,
\end{array}
\right.
\eeqn
\beqn\label{4.66}
\hskip -0,5truecm
0<C_{15}(\b,x_0)
&\!\!\!\!:=\!\!\!&
\left\{\!
\begin{array}{lll}
[h_{1,\b}(0)]^{-1}\psi_{1/2,\b}(\wtil x_{\b;+}),
\q{\rm if}\ \b\in(-3/4,0),\ x_0\in[-D_{1/2,\b},-|\b|^{-1}),
\\[2mm]
[c_{15}(\b,x_0)]^{-1}\psi_{1/2,\b}(\wtil x_{\b;+}),
\q{\rm if}\ \b\in(-3/4,0),\ x_0<-D_{1/2,\b},
\\[2mm]
[c_{15}(\b,x_0)]^{-1}\psi_{1/2,\b}(\wtil x_{\b;+}),
\q{\rm if}\ \b\le-3/4.
\end{array}
\right.
\eeqn
{\it ii)} Let $\a\in(1/2,1)$. Then
\beqn\label{4.67}
&&\hskip -1truecm
C_{16}(\a,\b,x_0)\le \Psi_{\a,\b}(x)\le C_{17}(\a,\b,x_0)\q\forall\,x\in[2x_0,0],
\eeqn
where
\beqn\label{4.68}
\hskip -1truecm
0>C_{16}(\a,\b,x_0)
&\!\!\!\!:=\!\!\!&
\left\{\!
\begin{array}{lll}
[h_{\a,\b}(2x_0)]^{-1}[\varphi_{\a,\b}(\ov x_\a)+\phi_{\a,\b}(\ov x_{\a;-})],\q
{\rm if}\ x_0\in[-D_{\a,\b},0),
\\[2mm]
[h_{\a,\b}(x_0)]^{-1}[\varphi_{\a,\b}(\ov x_\a)+\phi_{\a,\b}(\ov x_{\a;-})],\q
{\rm if}\ x_0<-D_{\a,\b},
\end{array}
\right.
\\[2mm]
\label{4.69}
0<C_{17}(\a,\b,x_0)
&\!\!\!\!:=\!\!\!&
\left\{\!
\begin{array}{lll}
[h_{\a,\b}(0)]^{-1}\phi_{\a,\b}(\ov x_{\a;+}),\q {\rm if}\ x_0\in[-D_{\a,\b},0),
\\[2mm]
[h_{\a,\b}(x_0)]^{-1}\phi_{\a,\b}(\ov x_{\a;+}),\q {\rm if}\ x_0<-D_{\a,\b}.
\end{array}
\right.
\eeqn
{\it iii)} Let $\a=1$. Then
\beqn\label{4.70}
&&\hskip -1truecm
C_{18}(\b,x_0)\le \Psi_{1,\b}(x)\le C_{19}(\b,x_0)\q\forall\,x\in[2x_0,0],
\eeqn
where
\beqn\label{4.71}
\hskip -1truecm
0>C_{18}(\b,x_0)
&\!\!\!\!:=\!\!\!&
\left\{\!
\begin{array}{lll}
[h_{1,\b}(2x_0)]^{-1}[\varphi_{1,\b}(\ov x_1)+\phi_{1,\b}(\ov x_{1;-})],
\q{\rm if}\ \b\in(-1,0),
\\[2mm]
[h_{1,\b}(x_0)]^{-1}[\varphi_{1,\b}(\ov x_1)+\phi_{1,\b}(\ov x_{1;-})],
\q{\rm if}\ \b\le-1,
\end{array}
\right.
\\[2mm]
\label{4.72}
0<C_{19}(\b,x_0)
&\!\!\!\!:=\!\!\!&
\left\{\!
\begin{array}{lll}
[h_{1,\b}(0)]^{-1}\phi_{1,\b}(\ov x_{1;+}),\q {\rm if}\ \b\in(-1,0),
\\[2mm]
[h_{1,\b}(x_0)]^{-1}\phi_{1,\b}(\ov x_{1;+}),\q {\rm if}\ \b\le-1.
\end{array}
\right.
\eeqn
{\it iv)} Let $\a\in(1,2)\cup(2,+\infty)$. Then
\beqn\label{4.73}
&&\hskip -0,5truecm
C_{20}(\a,\b,x_0)\le \Psi_{\a,\b}(x)\le C_{21}(\a,\b,x_0)\q\forall\,x\in[2x_0,0],
\eeqn
where
\beqn\label{4.74}
\hskip -1truecm
0>C_{20}(\a,\b,x_0)
&\!\!\!\!:=\!\!\!&
\left\{\!
\begin{array}{lll}
[h_{\a,\b}(x_0)]^{-1}[\varphi_{\a,\b}(\ov x_\a)+\phi_{\a,\b}(\ov x_{\a;-})],\q
{\rm if}\ x_0\in[-M_{\a,\b},0),
\\[2mm]
[h_{\a,\b}(x_{\a,\b;-})]^{-1}[\varphi_{\a,\b}(\ov x_\a)+\phi_{\a,\b}(\ov x_{\a;-})],\q
{\rm if}\ x_0<-M_{\a,\b},
\end{array}
\right.
\\[2mm]
\label{4.75}
0<C_{21}(\a,\b,x_0)
&\!\!\!\!:=\!\!\!&
\left\{\!
\begin{array}{lll}
[h_{\a,\b}(x_0)]^{-1}\phi_{\a,\b}(\ov x_{\a;+}),\q {\rm if}\ x_0\in[-M_{\a,\b},0),
\\[2mm]
[h_{\a,\b}(x_{\a,\b;+})]^{-1}\phi_{\a,\b}(\ov x_{\a;+}),\q {\rm if}\ x_0<-M_{\a,\b}.
\end{array}
\right.
\eeqn
{\it v)} Let $\a=2$. Then
\beqn\label{4.76}
&&\hskip -1truecm
C_{22}(\b,x_0)\le \Psi_{2,\b}(x)\le C_{23}(\b,x_0)\q\forall\,x\in[2x_0,0],
\eeqn
where
\beqn\label{4.77}
\hskip -1truecm
0>C_{22}(\b,x_0)
&\!\!\!\!:=\!\!\!&
\left\{\!
\begin{array}{lll}
[h_{2,\b}(x_0)]^{-1}\psi_{2,\b}(\widehat x_{\b;-}),\q{\rm if}\ x_0\in[-M_{2,\b},0),
\\[2mm]
[h_{2,\b}(x_{2,\b;-})]^{-1}\psi_{2,\b}(\widehat x_{\b;-}),\q{\rm if}\ x_0<-M_{2,\b},
\end{array}
\right.
\\[2mm]
\hskip -1truecm
\label{4.78}
0<C_{23}(\b,x_0)
&\!\!\!\!:=\!\!\!&
\left\{\!
\begin{array}{lll}
[h_{2,\b}(x_\b)]^{-1}\psi_{2,\b}(\widehat x_{\b;+}),\q{\rm if}\ x_0\in(-R_\b,0),
\\[2mm]
[h_{2,\b}(x_{2,\b;+})]^{-1}\psi_{2,\b}(\widehat x_{\b;+}),\q{\rm if}\ x_0\le-R_\b.
\end{array}
\right.
\eeqn
\end{lemma}
\begin{remark}\label{rem4.14}
\emph{From definitions (\ref{4.36}) and 
(\ref{4.58}) we find that the points
$x_{\a,\b;+}$ and $\ov x_{\a;+}$ coincide for 
$x_0=-c(\a)M_{\a,\b}<-M_{\a,\b}$, where $c(\a)=[(\a+4)/3]^{1/2}$, 
Therefore, if $\a\in(1,2)\cup(2,+\infty)$ and $x_0=-c(\a)M_{\a,\b}$,
then the upper bound in (\ref{4.73}) becomes 
$\Psi_{\a,\b}(x)\le\Psi_{\a,\b}(\ov x_{\a;+})$, $x\in[2x_0,0]$,
which is sharp. Instead, concerning the lower bound in (\ref{4.73}), 
we first observe that the points $\ov x_\a$ and $\ov x_{\a;-}$ 
coincide only for $\a=\wtil \a$, where $\wtil\a=(5+3\sqrt 17)/16$. 
An easy computation then shows that the equality 
$x_{\wtil\a,\b;-}=\ov x_{\wtil\a;-}$, or, equivalently, 
$x_{\wtil\a,\b;-}=\ov x_{\wtil\a}$, is satisfied for 
$x_0=-c({\wtil \a})M_{\wtil\a,\b}<-M_{\wtil\a,\b}$,
where $c=c(\a)$ is as above.
As a consequence, if $\a=\wtil\a\in(1,2)$ and $x_0=-c(\wtil\a)M_{\wtil\a,\b}$, 
then (\ref{4.73}) reduces to
$\Psi_{\wtil\a,\b}(\ov x_{\wtil\a;-})\le\Psi_{\wtil\a,\b}(x)
\le\Psi_{\wtil\a,\b}(\ov x_{\wtil\a;+})$, $x\in[2x_0,0]$,
which is optimal. On the contrary, if $\a\neq\wtil\a$ and/or 
$x_0\neq -c(\a)M_{\a,\b}$, estimate (\ref{4.73}) becomes less precise. 
For instance, when $|x_0|>M_{\a,\b}$ is large enough the points
$x_{\a,\b;-}$ and $x_{\a,\b;+}$ approach $2x_0$ and $0$, respectively,
and (\ref{4.73}) approaches 
$[h_{\a,\b}(2x_0)]^{-1}[\varphi_{\a,\b}(\ov x_\a)+\phi_{\a,\b}(\ov x_{\a;-})]
\le\Psi_{\a,\b}(x)\le[h_{\a,\b}(0)]^{-1}\psi_{\a,\b}(\ov x_{\a;+})$.
Despite of this fact, we want to stress that to find the greatest 
and least values of $\Psi_{2,\b}$ using the standard tools of calculus 
seems a very hard task, due to the difficulties in locating its stationary points
(see Remark \ref{rem5.3}).}
\end{remark}
\begin{remark}\label{rem4.15}
\emph{Similar observations to those in Remark \ref{rem4.14}
hold for $\a=2$ and estimate (\ref{4.76}). In fact, from definitions
(\ref{4.36}) and (\ref{4.60}) we find that the equalities
$x_{2,\b;-}=\widehat x_{\b;-}$ and $x_{2,\b;+}=\widehat x_{\b;+}$ 
are satisfied for $x_0=x_{0;-}:=-c_+(\b)M_{2,\b}$ and 
$x_0=x_{0;+}:=-c_-(\b)M_{2,\b}$, respectively, where
\beqn\no
&&\hskip -1truecm
c_\pm(\b):=\frac{4(3+2|\b|)}
{\big\{\big([8(3+2|\b|)^2+9]^{1/2}\pm 3\big)^2-36\big\}^{1/2}},\q \b<0.
\eeqn
We get $1<c_+(\b)<c_-(\b)$, 
so that $x_{0;+}<x_{0;-}<-M_{2,\b}$ for every $\b<0$.
In addition, using $M_{2,\b}=|\b|/(2\sqrt 3)$ and definition (\ref{4.61}),
easy computations yield $x_{0;+}<-R_\b$ for every $\b<0$, 
whereas $x_{0;-}<-R_\b$ only for $\b<\wtil\b$, where $\wtil\b=3(1-\sqrt 3)/2$.
From (\ref{4.76})--(\ref{4.78}) we thus deduce that, if $x_0=x_{0;-}$
(respectively, $x_0=x_{0;+}$), then the estimate from below
(respectively, above) in (\ref{4.76}) becomes 
$\Psi_{2,\b}(\widehat x_{\b;-})\le\Psi_{2,\b}(x)$
(respectively, $\Psi_{2,\b}(x)\le \Psi_{2,\b}(\widehat x_{\b;+}))$,
which is sharp.
Observe also that in the case $(\a,\b)=(2,-3/2)$, i.e. the case of
the normal Tricomi curve, we have $R_{\b}=1/2$ and, according
to $\b<\wtil\b$, both the point $x_{0;\pm}$ are less than $-1/2$.
Indeed, letting $\b=-3/2$ in $c_\pm(\b)$ and using $M_{2,-3/2}=\sqrt 3/4$, 
we obtain $x_{0;\pm}=-[(15\pm\sqrt{33})/32]^{1/2}$.} 
\end{remark}
\begin{corollary}\label{cor4.16}
Let $\a\ge1/2$ and $\b<0$ and let $C_{2m}(\b,x_0)$, $m=6,7,9,11$, 
and $C_{2n}(\a,\b,x_0)$, $n=8,10$, be the negative constants defined in 
$(\ref{4.63})$, $(\ref{4.65})$, $(\ref{4.68})$, $(\ref{4.71})$, $(\ref{4.74})$ 
and $(\ref{4.77})$. Then
\beqn\label{4.79}
&&\hskip -1truecm
|\Psi_{\a,\b}(x)|\le C_{24}(\a,\b,x_0),\q\forall\,x\in[2x_0,0],
\eeqn
where
\beqn\label{4.80}
&&\hskip -1truecm
C_{24}(\a,\b,x_0):=\left\{\!
\begin{array}{lll}
|C_{12}(\b,x_0)|,\q{\rm if}\ \a=1/2,\ x_0\in[-|\b|^{-1},0),
\\[2mm]
|C_{14}(\b,x_0)|,\q{\rm if}\ \a=1/2,\ x_0<-|\b|^{-1},
\\[2mm]
|C_{16}(\a,\b,x_0)|,\q{\rm if}\ \a\in(1/2,1),
\\[2mm]
|C_{18}(\b,x_0)|,\q{\rm if}\ \a=1,
\\[2mm]
|C_{20}(\a,\b,x_0)|,\q{\rm if}\ \a\in(1,2)\cup(2,+\infty),
\\[2mm]
|C_{22}(\b,x_0)|,\q{\rm if}\ \a=2.
\end{array}
\right.
\eeqn
\end{corollary}
\begin{remark}\label{rem.4.17}
\emph{As we have done in Remark \ref{rem4.11} for the constants
$C_j(\b,x_0)$, $j=6,7$, and $C_j(\a,\b,x_0)$, $j=8,\ldots,11$,
here we report the values of the constants $|C_{2m}(\b,x_0)|$, $m=6,7,9,11$,
and $|C_{2n}(\a,\b,x_0)|$, $n=8,10$. Using the expression of $\psi_{1/2,\b}$
(see the proof of Lemma \ref{lem4.13} and Corollary \ref{cor4.16} 
in Section \ref{Sec5}) we have
\beqn\no
\hskip -1truecm
|C_{12}(\b,x_0)|
&\!\!\!\!:=\!\!\!&
\left\{
\begin{array}{lll}
12|\b|^{-1},\q{\rm if}\ \b\in[-3/4,0),
\\[2mm]
12(3|\b|/4)^{-1/3}|x_0|^{1/3},\q {\rm if}\ \b<-3/4,\ x_0\in[-|\b|^{-1},-D_{1/2,\b}),
\\[2mm]
12|\b|^{-1},\q{\rm if}\ \b<-3/4,\ x_0\in[-D_{1/2,\b},0),
\end{array}
\right.
\\[2mm]
\hskip -1truecm
|C_{14}(\b,x_0)|
&\!\!\!\!:=\!\!\!&
\left\{\!
\begin{array}{lll}
c_{14}(\b,x_0)|x_0|^{-1},
\q{\rm if}\ \b\in(-3/4,0),\ x_0\in[-D_{1/2,\b},-|\b|^{-1}),
\\[2mm]
c_{14}(\b,x_0)(3|\b|/4)^{-1/3}|x_0|^{-2/3},\q{\rm if}\ \b\in(-3/4,0),\ x_0<-D_{1/2,\b},
\\[2mm]
c_{14}(\b,x_0)(3|\b|/4)^{-1/3}|x_0|^{-2/3},\q{\rm if}\ \b\le-3/4,
\end{array}
\right.\no
\eeqn
where $c_{14}(\b,x_0)=-\psi_{1/2,\b}(\wtil x_{\b;-})$ is given by the first 
expression in (\ref{5.24}). Instead, by evaluating $\varphi_{\a,\b}(\ov x_\a)$
and $\phi_{\a,\b}(\ov x_{\a;-})$, $\a>1/2$, $\b<0$, we get
\beqn\no
\hskip -1truecm
|C_{16}(\a,\b,x_0)|
&\!\!\!\!:=\!\!\!&
\left\{\!
\begin{array}{lll}
c_1(\a,\b)|x_0|^{(2\a-1)/(\a+1)}+c_2(\a,\b)|x_0|^{3/(\a+1)},
\q{\rm if}\ x_0\in[-D_{\a,\b},0),
\\[2mm]
c_3(\a,\b)|x_0|^{\a/(\a+1)}+c_4(\a,\b)|x_0|^{(4-\a)/(\a+1)},
\q{\rm if}\ x_0<-D_{\a,\b},
\end{array}
\right.
\\[2mm]
|C_{18}(\b,x_0)|
&\!\!\!\!:=\!\!\!&
\left\{\!
\begin{array}{lll}
c_1(1,\b)|x_0|^{1/2}+c_2(1,\b)|x_0|^{3/2}
,\q {\rm if}\ \b\in(-1,0),
\\[2mm]
c_3(1,\b)|x_0|^{1/2}+c_4(\a,\b)|x_0|^{3/2}
,\q {\rm if}\ \b\le-1,
\end{array}
\right.\no
\\[2mm]
|C_{20}(\a,\b,x_0)|
&\!\!\!\!:=\!\!\!&
\left\{\!
\begin{array}{lll}
c_3(\a,\b)|x_0|^{\a/(\a+1)}+c_4(\a,\b)|x_0|^{(4-\a)/(\a+1)},
\q{\rm if}\ x_0\in[-M_{\a,\b},0),
\\[2mm]
c_5(\a,\b,x_0)|x_0|^{3\a/(\a+1)}+c_6(\a,\b,x_0)|x_0|^{(\a+4)/(\a+1)},
\q{\rm if}\ x_0<-M_{\a,\b}.
\end{array}
\right.\no
\eeqn
Here, for every $\a>1/2$ and $\b<0$ we have set
\beqn\no
&&\hskip -1truecm
c_1(\a,\b):=
2\a^{-1}(4\a+1)^{(4\a+1)/(2\a+2)}
\big[(2\a^2+\a-1)/(18\a^2)\big]^{(2\a-1)/(2\a+2)}|\b|^{-3/(2\a+2)},\no
\\[1mm]
&&\hskip -1truecm
c_2(\a,\b):=
4\big[(\a+1)/(\a+4)\big]^{(\a+4)/(2\a+2)}(3|\b|/2)^{3/(2\a+2)},\no
\\[1mm]
&&\hskip -1truecm
c_j(\a,\b):=
|\b|\big[(\a+1)|\b|/2\big]^{-\a/(\a+1)}c_{j-2}(\a,\b),\q j=3,4,\no
\\[2mm]
&&\hskip -1truecm
c_j(\a,\b,x_0):=
[C_4(\a,\b,x_0)]^{-1}c_{j-4}(\a,\b),\q j=5,6,\no
\eeqn
where $C_4(\a,\b,x_0)$ is defined by (\ref{4.37}). Finally, if $\a=2$, then
using the first expression in the following (\ref{5.27}) and evaluating
$[g_{2,\b}(\widehat x_{\b;-})]^{3/2}$ we obtain
\beqn\no
\hskip -1truecm
|C_{22}(\b,x_0)|
&\!\!\!\!:=\!\!\!&
\left\{\!
\begin{array}{lll}
(4|\b|/9)^{1/3}c_7(\b)|x_0|^{2/3},
\q{\rm if}\ x_0\in[-M_{2,\b},0),
\\[2mm]
[C_4(2,\b,x_0)]^{-1}c_7(\b)|x_0|^2
\q{\rm if}\ x_0<-M_{2,\b},
\end{array}
\right.
\eeqn
where for every $\b<0$ we have set
\beqn\no
\hskip 0truecm
c_7(\b)
&\!\!\!\!:=\!\!\!&
(3|\b|^{-1}/8)^{1/2}[4(3+2|\b|)]^{-1}
\big\{[8(3+2|\b|)^2+9]^{1/2}+9\big\}\no
\\[1mm]
\hskip 0truecm
&&
\hskip 1truecm
\times
\big\{32|\b|^2+96|\b|+54+6[8(3+2|\b|)^2+9]^{1/2}\big\}^{1/2}.\no
\eeqn
For instance, if we let $\b=-3/2$ in the previous formula
of $C_{22}(\b,x_0)$, then we obtain the constant $C(x_0):=|C_{22}(-3/2,x_0)|$ 
for the case when $\s_{\a,\b}$ is the normal Tricomi curve.
An easy computation shows that this constant is precisely
\beqn\no
C(x_0)
&\!\!\!\!:=\!\!\!&
\left\{\!
\begin{array}{lll}
2^{-19/6}3^{2/3}(3+\sqrt{33})(15+\sqrt{33})^{1/2}|x_0|^{2/3},
\q{\rm if}\ x_0\in[-\sqrt 3/4,0),
\\[2mm]
2^{-7/2}3[x_0^2-(3/64)]^{-1/2}(3+\sqrt{33})(15+\sqrt{33})^{1/2}|x_0|^2,
\q{\rm if}\ x_0<-\sqrt 3/4.
\end{array}
\right.
\eeqn}
\end{remark}
We can now finally proceed to estimate the line integral $\int_\s\om_1\d s$
in the case when $\s=\s_{\a,\b}$ for some $\a\ge 1/2$ and $\b<0$. 
Due to Corollaries \ref{cor4.10} and \ref{cor4.16}
our estimate will deeply depend on the values of
the parameter pair $(\a,\b)$ and the length of the parabolic diameter 
$|AB|=2|x_0|$ of $\Om_{\a,\b}$.
\begin{lemma}\label{lem4.18}
Let $\Om=\Om_{\a,\b}$ for some $\a\ge1/2$ and $\b<0$ and
let $u$ be a real valued solution of problem $(\ref{3.1})$ which is 
Fr\' echet differentiable at each of the points of $\s=\s_{\a,\b}$
and such that $|y|^{1/2}u_x$, $u_y\in L^2(\s)$.
Let $\om_1$ be defined by $(\ref{3.9})$.
Then, for every $\ve>0$, the following estimate holds:
\beqn\label{4.81}
&&\hskip -1,5truecm
0\le\int_\s\om_1\d s
\le C_{25}(\a,\b,x_0,\ve)\||y|^{1/2}u_x\|_{L^2(\s)}^2
+C_{26}(\a,\b,x_0,\ve)\|u_y\|_{L^2(\s)}^2.
\eeqn
Here, for $j=25,26$ we have set
\beqn\label{4.82}
&&\hskip -1truecm
C_j(\a,\b,x_0,\ve):=
\left\{\!\!
\begin{array}{lll}
(-1)^{j+1}C_{k_{1,j}}(1/2,\b,x_0)+(\ve/2)|C_{12}(\b,x_0)|,
\q{\rm if}\ \a=1/2,\ x_0\in[-|\b|^{-1},0),
\\[2mm]
(-1)^{j+1}C_{k_{1,j}}(1/2,\b,x_0)+(\ve/2)|C_{14}(\b,x_0)|,
\q{\rm if}\ \a=1/2,\ x_0<-|\b|^{-1},
\\[2mm]
(-1)^{j+1}C_{k_{1,j}}(\a,\b,x_0)+(\ve/2)|C_{16}(\a,\b,x_0)|,
\q{\rm if}\ \a\in(1/2,1),
\\[2mm]
(-1)^{j+1}C_{k_{2,j}}(\b,x_0)+(\ve/2)|C_{18}(\b,x_0)|,
\q{\rm if}\ \a=1,
\\[2mm]
(-1)^{j+1}C_{k_{3,j}}(\a,\b,x_0)+(\ve/2)|C_{20}(\a,\b,x_0)|,
\q{\rm if}\ \a\in(1,2)\cup(2,+\infty),
\\[2mm]
(-1)^{j+1}C_{k_{3,j}}(2,\b,x_0)+(\ve/2)|C_{22}(\b,x_0)|,
\q{\rm if}\ \a=2,
\end{array}
\right.
\eeqn
where
\beqn\label{4.83}
&&\hskip -1truecm
k_{1,j}=\left\{\!
\begin{array}{lll}
11,\q{\rm if}\ j=25,
\\[1mm]
10,\q{\rm if}\ j=26,
\end{array}
\right.
\q
k_{2,j}=\left\{\!
\begin{array}{lll}
7,\q{\rm if}\ j=25,
\\[1mm]
6,\q{\rm if}\ j=26,
\end{array}
\right.
\q
k_{3,j}=\left\{\!
\begin{array}{lll}
9,\q{\rm if}\ j=25,
\\[1mm]
8,\q{\rm if}\ j=26,
\end{array}
\right.
\eeqn
and the constants $C_m(\b,x_0)$, $m=6,7,12,14,18,22$, and
$C_j(\a,\b,x_0)$, $j=8,9,10,11,16,20$, are defined by $(\ref{4.50})$,
$(\ref{4.51})$, $(\ref{4.53})$, $(\ref{4.54})$, $(\ref{4.56})$, $(\ref{4.57})$,
$(\ref{4.63})$, $(\ref{4.65})$, $(\ref{4.68})$, $(\ref{4.71})$, $(\ref{4.74})$ 
and $(\ref{4.77})$.
\end{lemma}
\begin{proof}
First, since $u_{|\s_{\a,\b}}=0$, the assumption that $u$ 
is Fr\' echet differentiable at each of the points of $\s_{\a,\b}$ 
implies that the directional derivative of $u$ is zero along $\s_{\a,\b}$. 
Therefore, since the assumption $\a\ge1/2$ implies that
$\partial\Om_{\a,\b}$ is $D$-starlike with respect to 
$D=-3x\partial_x-2y\partial_y$ (see Lemma \ref{lem4.2}), 
we are in position to apply the argument in \cite[p. 416]{LP5}, 
to which we refer the reader for the details, to derive
the lower bound $0\le\int_\s\om_1\d s$.
Now, recall that (see formula (\ref{4.32}))
\beqn\label{4.84}
\widehat{\om_1}=
\Theta_{\a,\b}(x)y\what{u_x}^2+\Psi_{\a,\b}(x)y^{1/2}\what{u_x}\,\what{u_y}
-\Theta_{\a,\b}(x)\what{u_y}^2,
\eeqn
where $\Theta_{\a,\b}$ and $\Psi_{\a,\b}$ are the functions defined in
(\ref{4.29}) and (\ref{4.31}). Then, using the inequality
$2|a||b|\le \ve a^2+\ve^{-1}b^2$, $a,b\in\rsp$, $\ve>0$, 
from (\ref{4.84}) we obtain:
\beqn\label{4.85}
\hskip -1truecm
\what{\om_1}
&\!\!\!\le\!\!\!&
\Theta_{\a,\b}(x)y\what{u_x}^2+|\Psi_{\a,\b}(x)||y^{1/2}\what{u_x}||\what{u_y}|
-\Theta_{\a,\b}(x)\what{u_y}^2\no
\\[2mm]
&\!\!\!\le\!\!\!&
\{\Theta_{\a,\b}(x)+(\ve/2)|\Psi_{\a,\b}(x)|\}y\what{u_x}^2
+\{(2\ve)^{-1}|\Psi_{\a,\b}(x)|-\Theta_{\a,\b}(x)\}\what{u_y}^2.
\eeqn
It thus now suffices to apply Corollaries \ref{cor4.10} and \ref{cor4.16}
to conclude the proof.
Indeed, due to (\ref{4.49})--(\ref{4.57}), (\ref{4.79}) and (\ref{4.80})
inequality (\ref{4.85}) leads us to
\beqn\no
&&\hskip -2truecm
\what{\om_1}\le 
C_{25}(\a,\b,x_0,\ve)y\what{u_x}^2+C_{26}(\a,\b,x_0,\ve)\what{u_y}^2,
\q\forall\,\ve>0,
\eeqn
where the constants $C_j(\a,\b,x_0,\ve)$, $j=25,26$, are defined 
by (\ref{4.82}), (\ref{4.83}). Hence
\beqn\no
&&\hskip -1truecm
\int_\s\om_1\d s
\le C_{25}(\a,\b,x_0,\ve)\||y|^{1/2}u_x\|_{L^2(\s)}^2
+C_{26}(\a,\b,x_0,\ve)\|u_y\|_{L^2(\s)}^2,\q\forall\,\ve>0,
\eeqn
and the proof of (\ref{4.81}) is complete.
\end{proof}
\begin{remark}\label{rem4.19}
\emph{Notice that the assumption that $u$ is Fr\' echet differentiable 
at each of the points of $\s$ is necessary only in order to show the estimate 
from below $0\le\int_\s \om_1\d s$, but does not contribute in any way for
showing the estimate from above of $\int_\s \om_1\d s$. Moreover,
not even the $D$-starlikeness of $\Om_{\a,\b}$, which derives from
the assumption $\a\ge 1/2$, plays any role in the proof of the
upper bound of $\int_\s \om_1\d s$. Indeed, what really counts in the proof
of such an upper bound is only the assumption $\a\ge1/2$
which allows us to apply Corollary $\ref{4.16}$.} 
\end{remark}
We can now prove our main result. 
For brevity, in Theorem \ref{thm4.20} below,
the symbols $L^2(\Om)$, $C^1(\ov\Om)$ and $C^2(\Om)$
are used without exception for both real and complex valued functions.
Needless to say, if we have to deal with complex valued functions, then
$L^2(\Om)$ is understood endowed with the usual complex inner product
$\langle\cdot,\cdot\rangle_{2,\sim}$ defined in Remark \ref{rem3.5},
whereas the spaces $C^1(\ov\Om)$ and $C^2(\Om)$ 
are meant for $C^1(\ov\Om;\csp)$ and $C^2(\Om;\csp)$, respectively.
\begin{theorem}\label{thm4.20}
Let $\Om=\Om_{\a,\b}$ for some $\a\ge1/2$ and $\b<0$
and let $\l_0>0$ be the principal eigenvalue of $\wtil T_{AC\cup\s}$ 
defined in {\rm Theorem \ref{thm2.4}}, where $\s=\s_{\a,\b}$.
Let $u=\Re u+i\Im u$, where $\Re u, \Im u\in \wtil W_{AC\cup\s}^1(\Om)$, 
be a non almost everywhere vanishing solution to problem 
\beqn\label{4.86}
&&\hskip -0,5truecm
\left\{\!\!
\begin{array}{lll}
Tu=\l u\q {\rm in}\ \Om,
\\[2mm]
u=0\q{\rm on}\ AC\cup\s,
\end{array}
\right.
\q\l\in[\l_0,+\infty),
\eeqn
and assume that $u_y$, $xu_x$, $yu_x\in C^1(\ov\Om)$, $xu\in C^2(\Om)$,
$u\in L^2(BC)$ and $|y|^{1/2}u_x$, $u_y\in L^2(BC)\cap L^2(\s)$.
Then, for every $\ve_j>0$, $j=1,2$, the following estimate holds:
\beqn\label{4.87}
\hskip 0truecm
0&\!\!\!<\!\!\!&
2\l^{1/2}\|u\|_{L^2(\Om)}\no
\\[2mm]
\hskip 0truecm
&\!\!\!\le\!\!\!&
\Big\{C_1(x_0,\ve_1)\||y|^{1/2}u_x\|_{L^2(BC)}^2
+C_2(x_0,\ve_1)\|u_y\|_{L^2(BC)}^2\no
\\[2mm]
\hskip -0truecm
&&
\ \, +\,C_3(x_0)\big[\|\Re u\|_{L^2(BC)}+\|\Im u\|_{L^2(BC)}\big]
\big[\||y|^{1/2}u_x\|_{L^2(BC)}+\|u_y\|_{L^2(BC)}\big]\no
\\[2mm]
\hskip -0truecm
&&
\ \, +\,C_{25}(\a,\b,x_0,\ve_2)\||y|^{1/2}u_x\|_{L^2(\s)}^2
+C_{26}(\a,\b,x_0,\ve_2)\|u_y\|_{L^2(\s)}^2\Big\}^{1/2},
\eeqn
where $C_j(x_0,\ve_1)$, $j=1,2$, are defined by $(\ref{4.20})$ with $\ve=\ve_1$,
$C_3(x_0)$ is defined by $(\ref{4.25})$, and $C_j(\a,\b,x_0,\ve_2)$, $j=25,26$,
are defined by $(\ref{4.82})$, $(\ref{4.83})$ with $\ve=\ve_2$.
\end{theorem}
\begin{proof}
First (see (\ref{3.11})), 
since $\l\in[\l_0,+\infty)$ and $u$ does not vanish almost everywhere,
we have that the real valued functions 
$v_1=\Re u$ and $v_2=\Im u$ also solve (\ref{4.86}),
and that at least one of the two is not the zero element of $L^2(\Om)$.
Moreover, our assumptions on $u$ imply that 
$(v_j)_y$, $x(v_j)_x$, $y(v_j)_x\in C^1(\ov\Om)$, 
$xv_j\in C^2(\Om)$, $v_j\in L^2(BC)$ and 
$|y|^{1/2}(v_j)_x$, $(v_j)_y\in L^2(BC)\cap L^2(\s)$, $j=1,2$.
Thus, setting $f(t)=\l t$, $t\in\rsp$, we may as well suppose from 
the outset that $u\in \wtil W^1_{AC\cup\s}(\Om)$,
$u\neq 0$, is a real valued solution to problem (\ref{3.1}) 
satisfying the assumptions of Theorem \ref{thm3.3} 
and Lemmas \ref{lem4.4}, \ref{lem4.5} and \ref{lem4.18}. 
Observe that, according to Remark \ref{rem4.19}, here we do not
require that $u$ is Fr\' echet differentiable at each of the points of $\s$,
since we do not need the lower bound $0\le\int_\s \om_1\ds$. 
In particular, $u$ satisfies the identity (\ref{3.8}) with $F(t)=(\l/2)t^2$, $t\in\rsp$.
Therefore, since $10F(u)-uf(u)=4\l u^2$, we have
\beqn\label{4.88}
&&\hskip -2truecm
0<4\l\|u\|_{L^2(\Om)}^2
=\int_{BC}(\om_1+\om_2)\d s+\int_{\s}\om_1\d s,
\eeqn
where $\om_1$ and $\om_2$ are defined by (\ref{3.9}) and (\ref{3.10}).
Hence, taking $\ve=\ve_1>0$ in Lemma \ref{lem4.4}
and $\ve=\ve_2>0$ in Lemma \ref{lem4.18} and
applying estimate (\ref{4.19}), (\ref{4.24}) and (\ref{4.81}),
from (\ref{4.88}) we deduce
\beqn\label{4.89}
\hskip -0truecm
0<4\l\|u\|_{L^2(\Om)}^2
&\!\!\!\le\!\!\!&
C_1(x_0,\ve_1)\||y|^{1/2}u_x\|_{L^2(BC)}^2
+C_2(x_0,\ve_1)\|u_y\|_{L^2(BC)}^2\no
\\[2mm]
\hskip -0truecm
&&
+\,C_3(x_0)\|u\|_{L^2(BC)}
\big[\||y|^{1/2}u_x\|_{L^2(BC)}+\|u_y\|_{L^2(BC)}\big]\no
\\[2mm]
\hskip -0truecm
&&
+\,C_{25}(\a,\b,x_0,\ve_2)\||y|^{1/2}u_x\|_{L^2(\s)}^2
+C_{26}(\a,\b,x_0,\ve_2)\|u_y\|_{L^2(\s)}^2.
\eeqn
This proves (\ref{4.87}) in the case that $u$ is real valued.
To complete the proof in the general case it suffices to replace
$u$ in (\ref{4.89}) with $\Re u$ and $\Im u$, respectively,
and then summing up the so obtained estimate, taking into account
the identities $|y|^{1/2}({\mathfrak F} u)_x={\mathfrak F}(|y|^{1/2}u_x)$ and
$({\mathfrak F} u)_y={\mathfrak F}(u_y)$, ${\mathfrak F}=\Re,\Im$,
and the inequalities
$\||y|^{1/2}(\mathfrak F u)_x\|_{L^2(BC)}\le \||y|^{1/2}u_x\|_{L^2(BC)}$
and $\|(\mathfrak F u)_y\|_{L^2(BC)}\le \|u_y\|_{L^2(BC)}$, 
${\mathfrak F}=\Re,\Im$.
\end{proof}
\begin{remark}\label{rem4.21}
\emph{Notice that, if $u\in\wtil W_{AC\cup\s}^1$
is an eigenfunction corresponding to an eigenvalue $\l\in[\l_0,+\infty)$
satisfying the assumption of Theorem \ref{thm4.20}, 
then (\ref{4.88}) improves the inequality
$0\le \int_{BC}(\om_1+\om_2)\d s+\int_{\s}\om_1\d s$
which is shown in the proof of \cite[Theorem 4.2]{LP5} (take there $k=0$) 
under the assumption $u\in C^2(\ov\Om)$.}
\end{remark}
Of course, when $\Om=\Om_{\a,\b}$ for some $\a\ge1/2$ and $\b<0$,
one can apply estimate (\ref{4.87}), with the quadruplet $(\l,u,\Re u,\Im u)$
being replaced by $(\l_0,u_0,u_0,0)$ and $(\wtil\l_0,\wtil u_0,\wtil u_0,0)$, 
respectively, to the eigenfunctions $u_0\in\wtil W_{AC\cup\s}(\Om)$ 
and $\wtil u_0\in\wtil W_{AC\cup\s}(\Om)\cap C(\ov\Om)$
of Theorems \ref{thm2.4} and \ref{thm2.5}, provided one can show
that they satisfy the additional regularity requirements of Theorem \ref{thm4.20}.
\section{Proof of Lemmas \ref{lem4.8} and \ref{lem4.13} 
and Corollaries \ref{cor4.10} and \ref{cor4.16}}
\label{Sec5}
\setcounter{equation}{0}
{\it Proof of Lemma \ref{lem4.8}.}
First, from definitions (\ref{4.3}) and (\ref{4.12}) we immediately 
derive $h_{\a,\b}(x)=h_{\a,\b}(2x_0-x)$, so $h_{\a,\b}$ is an even function 
with respect to the line $\{x=x_0\}$ and it suffices to prove the lemma assuming 
$x\in[x_0,0]$. With such a convention, we change the variable from $x$ to 
$X=x-x_0\in[0,-x_0]$ and we consider the function 
\beqn\label{5.1}
&&\hskip -1truecm
H_{\a,\b}(X):=h_{\a,\b}(X+x_0)=\big\{X^2+\b^{-2}[G_{\a,\b}(X)]^{2\a}\big\}^{1/2},
\q X\in[0,-x_0],
\eeqn
where 
\beqn\label{5.2}
&&\hskip -1truecm
G_{\a,\b}(X):=g_{\a,\b}(X+x_0)=
\bigg[\frac{(\a+1)|\b|(x_0^2-X^2)}{2}\bigg]^{1/(\a+1)},
\q X\in[0,-x_0].
\eeqn
Differentiating (\ref{5.1}) with respect to $X$ and using 
$G_{\a,\b}'(X)=\b X[G_{\a,\b}(X)]^{-\a}$, $X\in[0,-x_0)$,  we get
\beqn\label{5.3}
&&\hskip -1truecm
H_{\a,\b}'(X)=
[H_{\a,\b}(X)]^{-1}X\big\{1-\a|\b|^{-1}[G_{\a,\b}(X)]^{\a-1}\big\},\q X\in[0,-x_0).
\eeqn
Hence, the three cases $\a=1$, $\a>1$ and $\a\in(0,1)$ have to be considered.

\noindent
{\it i) Case $\a=1$}. 
In this case $H_{1,\b}'(X)\ge 0$ if and only if $1-|\b|^{-1}\ge 0$, i.e. if  $\b\le -1$. 
In particular, if $\b=-1$, then $H_{1,\b}'(X)=0$ for every $X\in[0,-x_0)$ and
(see the observation below formula (\ref{4.12})) $H_{1,\b}(X)$ is costant
equal to $|x_0|$. Therefore $H_{1,\b}$ is non decreasing
(respectively, decreasing) for $\b\le -1$ (respectively, $\b\in(-1,0)$)
and (\ref{4.40}) follows from
\beqn\no
\left\{\!
\begin{array}{lll}
h_{1,\b}(x_0)=H_{1,\b}(0)\le H_{1,\b}(X)\le H_{1,\b}(-x_0)=h_{1,\b}(0),\q \b\le -1,
\\[2mm]
h_{1,\b}(0)=H_{1,\b}(-x_0)\le H_{1,\b}(X)\le H_{1,\b}(0)=h_{1,\b}(x_0),\q\b\in(-1,0).
\end{array}
\right.
\eeqn
To complete the proof of the case $\a=1$, we rewrite (\ref{5.3}) as
$H_{1,\b}'(X)=(1-|\b|^{-1})\wtil H_{1,\b}(X)$, where
$\wtil H_{1,\b}(X)$ is the nonnegative function 
$[H_{1,\b}(X)]^{-1}X$, $X\in[0,-x_0)$.
Differentiating $\wtil H_{1,\b}(X)$ with respect to $X$ and
using (\ref{5.1})--(\ref{5.3}) with $\a=1$, we get that $\wtil H_{1,\b}$
is an increasing function, since
\beqn\no
\hskip -1truecm
\wtil H_{1,\b}'(X)
&\!\!\!=\!\!\!&
[H_{1,\b}(X)]^{-3}\big\{[H_{1,\b}(X)]^2-(1-|\b|^{-1})X^2\big\}\no
\\[2mm]
\hskip -1truecm
&\!\!\!=\!\!\!&
[H_{1,\b}(X)]^{-3}|\b|^{-1}x_0^2> 0,\q\forall\,X\in[0,-x_0).\no
\eeqn
As a consequence, if $\b<-1$, then $1-|\b|^{-1}>0$, and
$H_{1,\b}'(X)=(1-|\b|^{-1})\wtil H_{1,\b}(X)$ is increasing too, proving
the convexity of $h_{1,\b}(x)$, $x\in[2x_0,0]$. On the contrary, if $\b\in(-1,0)$,
then $1-|\b|^{-1}<0$ and $H_{1,\b}'(X)=(1-|\b|^{-1})\wtil H_{1,\b}(X)$
turns out to be a decreasing function, proving the concavity of
$h_{1,\b}(x)$, $x\in[2x_0,0]$.

\noindent{\it ii) Case $\a>1$}.
In this case from (\ref{5.3}) we find $H_{\a,\b}'(X)\ge 0$ if and only if
$1-\a|\b|^{-1}[G_{\a,\b}(X)]^{\a-1}\ge 0$, i.e. if and only if
$[G_{\a,\b}(X)]^{\a-1}\le\a^{-1}|\b|$. Raising both sides of this latter inequality
to the power $\g_\a=(\a+1)/(\a-1)>1$ and using (\ref{5.2}) we thus find
$H_{\a,\b}'(X)\ge 0$ if and only if $X^2\ge x_0^2-c_{\a,\b}$, where
$c_{\a,\b}$ is defined by (\ref{4.33}).
Therefore, if $x_0\in[-M_{\a,\b},0)$, $M_{\a,\b}=(c_{\a,\b})^{1/2}$,
then $H_{\a,\b}'(X)\ge 0$ for every $X\in[0,-x_0)$ and $H_{\a,\b}$ 
is a non decreasing function in $[0,-x_0]$. Hence, (\ref{4.41}) follows from
$h_{\a,\b}(x_0)=H_{\a,\b}(0)\le H_{\a,\b}(X)\le H_{\a,\b}(-x_0)=h_{\a,\b}(0)$.
On the contrary, if $x_0<-M_{\a,\b}$, we have 
$H_{\a,\b}'(X)\le 0$ in $[0,X_{\a,\b;+}]$ 
and $H_{\a,\b}'(X)\ge 0$ in $[X_{\a,\b;+},-x_0)$, where 
$X_{\a,\b;+}={[x_0^2-c_{\a,\b}]}^{1/2}$.
Hence $H_{\a,\b}$ is non increasing in $[0,X_{\a,\b;+}]$ and 
non decreasing in $[X_{\a,\b;+},-x_0)$, and (\ref{4.42}) follows from
$h_{\a,\b}(x_{\a,\b;+})=H_{\a,\b}(X_{\a,\b;+})\le H_{\a,\b}(X)\le
\max\{H_{\a,\b}(0),H_{\a,\b}(-x_0)\}=\max\{h_{\a,\b}(x_0),h_{\a,\b}(0)\}$.
To complete the proof of the case $\a>1$, 
let us assume first $x_0\in[-M_{\a,\b},0)$.
As we noted above, in such a case the function 
$\wtil G_{\a,\b}(X):=1-\a|\b|^{-1}[G_{\a,\b}(X)]^{\a-1}$, $X\in[0,-x_0)$, 
is nonnegative and, moreover, is non decreasing 
due to the non increasing character of $G_{\a,\b}$. 
An easy computations, taking into account formulae (\ref{5.1}) and (\ref{5.3}),
shows that the nonnegative function $\wtil H_{\a,\b}(X):=[H_{\a,\b}(X)]^{-1}X$, 
$X\in[0,-x_0)$, is non decreasing too, since for every $X\in[0,-x_0)$ we have
\beqn\label{5.4}
\hskip -0,5truecm  
\wtil H_{\a,\b}'(X)
&\!\!\!=\!\!\!&
[H_{\a,\b}(X)]^{-3}
\big\{[H_{\a,\b}(X)]^2-\big(1-\a|\b|^{-1}[G_{\a,\b}(X)]^{\a-1}\big)X^2\big\}\no
\\[2mm]
\hskip -0,5truecm
&\!\!\!=\!\!\!&
[H_{\a,\b}(X)]^{-3}
\big\{\b^{-2}[G_{\a,\b}(X)]^{\a+1}+\a|\b|^{-1}X^2\big\}
[G_{\a,\b}(X)]^{\a-1}\ge 0.
\eeqn
Then, if we take $0\le X_1\le X_2<-x_0$, 
from $0\le\wtil G_{\a,\b}(X_1)\le\wtil G_{\a,\b}(X_2)$,
$0\le \wtil H_{\a,\b}(X_1)\le\wtil H_{\a,\b}(X_2)$ and 
$H_{\a,\b}'(X)=\wtil G_{\a,\b}(X)\wtil H_{\a,\b}(X)$ we derive
$0\le H_{\a,\b}'(X_1)\le H_{\a,\b}'(X_2)$. 
Thus $H_{\a,\b}'$ is a non decreasing function
or, equivalently, $H_{\a,\b}$ is a convex function 
completing the proof of {\it ii-a)}.
Let us now assume $x_0<-M_{\a,\b}$. Since in this case we have
$\wtil G_{\a,\b}(X)\ge 0$ if and only if $X\in[X_{\a,\b;+},-x_0]$, 
the previous argument can be used only to show that 
$H_{\a,\b}$ is still a convex function in the interval $[X_{\a,\b;+},-x_0]$.
To see what happens in the interval $[0,X_{\a,\b;+}]$ 
we analyze the second derivative of $H_{\a,\b}$ with respect to $X$. 
To this purpose it is more convenient to differentiate the expression
$H_{\a,\b}'(X)=\wtil G_{\a,\b}(X)\wtil H_{\a,\b}(X)$ where 
$\wtil G_{\a,\b}$ and $\wtil H_{\a,\b}$ are as before.
Using formula (\ref{5.4}) for $\wtil H_{\a,\b}'(X)$ and 
\beqn\label{5.5}
\hskip -0truecm
\wtil G_{\a,\b}'(X)
=-\a(\a-1)|\b|^{-1}[G_{\a,\b}(X)]^{\a-2}G_{\a,\b}'(X)
=\a(\a-1)[G_{\a,\b}(X)]^{-2}X,
\eeqn
we thus find
\beqn\label{5.6}
\hskip -1truecm
H_{\a,\b}''(X)
&\!\!\!=\!\!\!&
\wtil G_{\a,\b}'(X)\wtil H_{\a,\b}(X)+\wtil G_{\a,\b}(X)\wtil H_{\a,\b}'(X)\no
\\[2mm]
\hskip -1truecm
&\!\!\!=\!\!\!&
\wtil G_{\a,\b}'(X)[H_{\a,\b}(X)]^{-1}X++\wtil G_{\a,\b}(X)\wtil H_{\a,\b}'(X)\no
\\[2mm]
\hskip -1truecm
&\!\!\!=\!\!\!&
[G_{\a,\b}(X)]^{-2}[H_{\a\,\b}(X)]^{-3}N_{\a,\b}(X),
\q X\in[0,X_{\a,\b;+}].
\eeqn
Here
\beqn\label{5.7}
\hskip -1truecm
&&N_{\a,\b}(X)\no
\\[2mm]
\hskip -1truecm
&\!\!\!=\!\!\!&
\a(\a-1)[H_{\a,\b}(X)]^2X^2+\wtil G_{\a,\b}(X)
\big\{\b^{-2}[G_{\a,\b}(X)]^{\a+1}+\a|\b|^{-1}X^2\big\}
[G_{\a,\b}(X)]^{\a+1}\no
\\[2mm]
\hskip -1truecm
&\!\!\!=\!\!\!&
\a(\a-1)[H_{\a,\b}(X)]^2X^2+
\Big(\frac{\a+1}{4}\Big)\big[\a(x_0^4-X^4)+(x_0^2-X^2)^2\big]
\wtil G_{\a,\b}(X),
\eeqn
where in the latter equality we have used
$[G_{\a,\b}(X)]^{\a+1}=2^{-1}(\a+1)|\b|(x_0^2-X^2)$.
Now, since for $x_0<-M_{\a,\b}$ we have
$\wtil G_{\a,\b}(0)=1-\a|\b|^{-1}[G_{\a,\b}(0)]^{\a-1}<0$
and $\wtil G_{\a,\b}(X_{\a,\b;+})=0$, from (\ref{5.7}) it follows 
\beqn\label{5.8}
\left\{\!
\begin{array}{lll}
N_{\a,\b}(0)=[(\a+1)x_0^2/2]^2\wtil G_{\a,\b}(0)<0,
\\[2mm]
N_{\a,\b}(X_{\a,\b;+})=\a(\a-1)[H_{\a,\b}(X_{\a,\b;+})]^2[X_{\a,\b;+}]^2>0.
\end{array}
\right.
\eeqn
Consequently, from (\ref{5.6}) we deduce 
$H_{\a,\b}''(0)<0<H_{\a,\b}''(X_{\a,\b;+})$. 
To complete the proof of {\it ii-b)} it then suffices to show that 
$N_{\a,\b}(X)$ is an increasing function in $[0,X_{\a,\b;+}]$. 
Indeed, $N_{\a,\b}(X)$ being continuous, by virtue of the Mean Value Theorem
this will imply that there exists a unique $\wtil X_{\a,\b}\in(0,X_{\a,\b;+})$ 
such that $N_{\a,\b}(X)<0$ for $X\in[0,\wtil X_{\a,\b})$, 
$N_{\a,\b}(\wtil X_{\a,\b})=0$ 
and $N_{\a,\b}(X)>0$ for $X\in(\wtil X_{\a,\b},X_{\a,\b;+}]$.
Thus, from (\ref{5.6}) we shall derive 
$H_{\a,\b}''(X)<0$ for $X\in[0,\wtil X_{\a,\b})$, $H_{\a,\b}(\wtil X_{\a,\b})=0$ 
and $H_{\a,\b}''(X)>0$ for $X\in(\wtil X_{\a,\b},X_{\a,\b;+}]$, 
and {\it ii-b)} will be proved with 
$\wtil x_{\a,\b}=x_0+\wtil X_{\a,\b}\in(x_0,x_{\a,\b;+})$.
Now, differentiating (\ref{5.7}) with respect to $X$ and using (see (\ref{5.3}))
$H_{\a,\b}'(X)H_{\a,\b}(X)=X\wtil G_{\a,\b}(X)$ and (\ref{5.5}), we find
\beqn\label{5.9}
\hskip -1truecm
N_{\a,\b}'(X)
&\!\!\!=\!\!\!&
2\a(\a-1)\big\{X^2\wtil G_{\a,\b}(X)+[H_{\a,\b}(X)]^2\big\}X
-(\a+1)\big[x_0^2+(\a-1)X^2\big]\wtil G_{\a,\b}(X)X\no
\\[2mm]
\hskip -1truecm
&&
+\Big[\frac{\a(\a^2-1)}{4}\Big]\big[\a(x_0^4-X^4)+(x_0^2-X^2)^2\big]
[G_{\a,\b}(X)]^{-2}X\no
\\[2mm]
\hskip -1truecm
&\!\!\!=\!\!\!&
[G_{\a,\b}(X)]^{-2}XJ_{\a,\b}(X),\q \forall\,X\in(0,X_{\a,\b;+}),
\eeqn
where
\beqn\label{5.10}
\hskip -0,5truecm
J_{\a,\b}(X)
&\!\!\!=\!\!\!&
\big[(\a-1)^2X^2-(\a+1)x_0^2\big]\wtil G_{\a,\b}(X)[G_{\a,\b}(X)]^{2}
+2\a(\a-1)[H_{\a,\b}(X)]^2[G_{\a,\b}(X)]^2\no
\\[2mm]
\hskip -0,5truecm
&&+\Big[\frac{\a(\a^2-1)}{4}\Big]\big[\a(x_0^4-X^4)+(x_0^2-X^2)^2\big],
\q X\in (0,X_{\a,\b;+}).
\eeqn
If we can show $J_{\a,\b}(X)>0$ in $(0,X_{\a,\b;+})$, then from (\ref{5.9}) 
we get $N_{\a,\b}'(X)>0$ and $N_{\a,\b}(X)$ is an increasing function 
in $[0,X_{\a,\b;+}]$ completing our proof. 
Hence, it is enough to show that $J_{\a,\b}$ 
is a decreasing function and that $J_{\a,\b}(X_{\a,\b;+})>0$ for $x_0<-M_{\a,\b}$.
Before going on, and in order to justify the forthcoming computations,
we want to stress that the positivity of the function $J_{\a,\b}$
in the interval $(0,X_{\a,\b;+})$ can not be deduced immediately from
formula (\ref{5.10}). To see this we observe that
the last two terms in (\ref{5.10}) are clearly positive for $\a>1$,
but the first term may be negative. Indeed, since $\wtil G_{\a,\b}(X)<0$ in
$(0,X_{\a,\b;+})$, for the first term to be nonnegative we need the inequality
$(\a-1)^2X^2-(\a+1)x_0^2\le0$, which, when $\a>1$, is satisfied
only for $X\in(0,s_{\a,x_0}]$, where $s_{\a,x_0}=-(\a-1)^{-1}(\a+1)^{1/2}x_0$.
But, if $\a>3$, then $(0,s_{\a,x_0}]\subsetneq (0,-x_0)$,
and we cannot ensure that the first term is positive in $(0,X_{\a,\b;+})$.
In fact, since for $\a\to+\infty$ we have $s_{\a,x_0}\to 0^+$
and $X_{\a,\b,+}\to(-x_0)^-$, for $\a>3$ large enough we have
$(0,s_{\a,x_0}]\subsetneq (0,X_{\a,\b;+})$ and the first term
becomes a negative one in $(s_{\a,x_0},X_{\a,\b;+})$.
To prove that $J_{\a,\b}$ is decreasing we study the sign 
of its first derivative $J_{\a,\b}'(X)$, but first it is convenient 
to rewrite $J_{\a,\b}$ in a easier way. In fact, we have
\beqn\label{5.11}
\hskip -1truecm
\wtil G_{\a,\b}(X)[G_{\a,\b}(X)]^2
&\!\!\!=\!\!\!&
[G_{\a,\b}(X)]^2-\a|\b|^{-1}[G_{\a,\b}(X)]^{\a+1}\no
\\[2mm]
\hskip -1truecm
&\!\!\!=\!\!\!&
[G_{\a,\b}(X)]^2-\Big[\frac{\a(\a+1)}{2}\Big](x_0^2-X^2),
\eeqn
and
\beqn\label{5.12}
\hskip -1truecm
[H_{\a,\b}(X)]^2[G_{\a,\b}(X)]^2
&\!\!\!=\!\!\!&
X^2[G_{\a,\b}(X)]^2+\b^{-2}[G_{\a,\b}(X)]^{2(\a+1)}\no
\\[2mm]
\hskip-1truecm
&\!\!\!=\!\!\!&
X^2[G_{\a,\b}(X)]^2+\Big[\frac{(\a+1)^2}{4}\Big](x_0^2-X^2)^2.
\eeqn
Therefore, replacing (\ref{5.11}) and (\ref{5.12}) in (\ref{5.10}), we get
\beqn\label{5.13}
&&\hskip -1truecm
J_{\a,\b}(X)=P_\a(X;x_0)[G_{\a,\b}(X)]^2+Q_\a(X;x_0),
\eeqn
where
\beqn\label{5.14}
&&\hskip -1truecm
P_\a(X;x_0)=(\a-1)^2X^2-(\a+1)x_0^2+2\a(a-1)X^2
=(3\a^2-4\a+1)X^2-(\a+1)x_0^2,
\eeqn
and
\beqn
\hskip -1truecm
Q_\a(X;x_0)
&\!\!\!=\!\!\!&
-\Big[\frac{\a(\a+1)}{2}\Big]\big[(\a-1)^2X^2-(\a+1)x_0^2\big](x_0^2-X^2)\no
\\[2mm]
\hskip -1truecm
&&
+\Big[\frac{2\a(\a-1)(\a+1)^2}{4}\Big](x_0^2-X^2)^2\no
\\
[2mm]
\hskip -1truecm
&&
+\Big[\frac{\a(\a^2-1)}{4}\Big]\big[\a(x_0^4-X^4)+(x_0^2-X^2)^2\big].\no
\eeqn
Now, a lengthy but easy computation yields
\beqn\no
\hskip -1truecm
Q_\a(X;x_0)
&\!\!\!=\!\!\!&
\Big[\frac{\a(\a+1)}{4}\Big]
\big[(3\a^2-2\a-1)X^4-2(3\a^2-1)x_0^2X^2+(3\a^2+2\a-1)x_0^4\big],
\eeqn
so from (\ref{5.13}) and (\ref{5.14}) we obtain
\beqn\label{5.15}
\hskip -1truecm
J_{\a,\b}(X)
&\!\!\!=\!\!\!&
\big[(3\a^2-4\a+1)X^2-(\a+1)x_0^2\big][G_{\a,\b}(X)]^2\no
\\[1mm]
\hskip -1truecm
&&
+\Big[\frac{\a(\a+1)}{4}\Big]
\big[(3\a^2-2\a-1)X^4-2(3\a^2-1)x_0^2X^2+(3\a^2+2\a-1)x_0^4\big].
\eeqn
Now, using $G_{\a,\b}'(X)=\b X[G_{\a,\b}(X)]^{-\a}$, from (\ref{5.15}) 
it follows
\beqn\label{5.8.6}
\hskip -0,5truecm
J_{\a,\b}'(X)
&\!\!\!=\!\!\!&
2(3\a^2-4\a+1)X[G_{\a,\b}(X)]^2+
2\b\big[(3\a^2-4\a+1)X^2-(\a+1)x_0^2\big]X[G_{\a,\b}(X)]^{1-\a}\no
\\[2mm]
\hskip -0,5truecm
&&
+\a(\a+1)\big[(3\a^2-2\a-1)X^2-(3\a^2-1)x_0^2\big]X\no
\\[2mm]
\hskip -0,5truecm
&\!\!\!=\!\!\!&
X[G_{\a,\b}(X)]^{1-\a}\wtil J_{\a,\b}(X),\no
\eeqn
where
\beqn\label{5.16}
\hskip -0,5truecm
\wtil J_{\a,\b}(X)
&\!\!\!=\!\!\!&
2(3\a^2-4\a+1)[G_{\a,\b}(X)]^{\a+1}
+2\b\big[(3\a^2-4\a+1)X^2-(\a+1)x_0^2\big]\no
\\[2mm]
\hskip -0,5truecm
&&
+\a(\a+1)\big[(3\a^2-2\a-1)X^2-(3\a^2-1)x_0^2\big][G_{\a,\b}(X)]^{\a-1}\no
\\[2mm]
\hskip -0,5truecm
&\!\!\!=\!\!\!&
(3\a^2-4\a+1)(\a+1)|\b|(x_0^2-X^2)
-2|\b|\big[(3\a^2-4\a+1)X^2-(\a+1)x_0^2\big]\no
\\[2mm]
\hskip -0,5truecm
&&
-\a(\a+1)\big[(3\a^2-1)x_0^2-(3\a^2-2\a-1)X^2\big][G_{\a,\b}(X)]^{\a-1}\no
\\[2mm]
&\!\!\!=\!\!\!&
|\b|\big[(\a+1)(3\a^2-4\a+3)x_0^2-(\a+3)(3\a^2-4\a+1)X^2\big]\no
\\[2mm]
\hskip -0,5truecm
&&
-\a(\a+1)\big[(3\a^2-1)x_0^2-(3\a^2-2\a-1)X^2\big][G_{\a,\b}(X)]^{\a-1}.
\eeqn
Hence, $J_{\a,\b}'(X)< 0$, $X\in(0,X_{\a,\b;+})$, i.e. $J_{\a,\b}$ is a 
decreasing function, if and only if $\wtil J_{\a,\b}(X)<0$, $X\in(0,X_{\a,\b;+})$. 
To prove that $\wtil J_{\a,\b}(X)<0$, $X\in(0,X_{\a,\b;+})$, we show
that $\wtil J_{\a,\b}(X)\ge0$, $X\in(0,X_{\a,\b;+})$, leads to a contradiction.
First we observe that, if $\a>1$, then $0<3\a^2-2\a-1<3\a^2-1$,
so $(3\a^2-1)x_0^2-(3\a^2-2\a-1)X^2>0$
for every $X\in(0,r_{\a,x_0})$, where 
$r_{\a,x_0}=-[(3\a^2-1)/(3\a^2-2\a-1)]^{1/2}x_0>-x_0$. Hence, a fortiori,
$(3\a^2-1)x_0^2-(3\a^2-2\a-1)X^2>0$ for $X\in(0,X_{\a,\b;+})\subsetneq(0,-x_0)$.
It thus follows from (\ref{5.16}) that $\wtil J_{\a,\b}(X)\ge 0$, $X\in(0,X_{\a,\b;+})$,
is equivalent to
\beqn\label{5.17}
&&\hskip -1truecm
[G_{\a,\b}(X)]^{\a-1}\le R_{\a,\b}(X),\q X\in (0,X_{\a,\b;+}),
\eeqn
where
\beqn
&&\hskip -1truecm
R_{\a,\b}(X):=
\frac{|\b|\big[(\a+1)(3\a^2-4\a+3)x_0^2-(\a+3)(3\a^2-4\a+1)X^2\big]}
{\a(\a+1)\big[(3\a^2-1)x_0^2-(3\a^2-2\a-1)X^2\big]}.\no
\eeqn
But
\beqn
&&\hskip -1truecm
R_{\a,\b}'(X)=\frac{-16|\b|(3\a+2)(\a-1)^{2}x_0^2X}
{(\a+1)\big[(3\a^2-1)x_0^2-(3\a^2-2\a-1)X^2\big]^2},\no
\eeqn
so $R_{\a,\b}$ is a decreasing function in $(0,X_{\a,\b;+})$ for
every $\a>1$ and $\b<0$. Therefore, 
$R_{\a,\b}(X)<R_{\a,\b}(0)=\a^{-1}|\b|(3\a^2-4\a+3)(3\a^2-1)^{-1}<\a^{-1}|\b|$
for every $X\in(0,X_{\a,\b;+})$, $\a>1$, $\b<0$.  On the other side, 
the function $G_{\a,\b}$ being decreasing, for $\a>1$ we have 
$\a^{-1}|\b|=[G_{\a,\b}(X_{\a,\b;+})]^{\a-1}<[G_{\a,\b}(X)]^{\a-1}$,
so that $R_{\a,\b}(X)<\a^{-1}|\b|<[G_{\a,\b}(X)]^{\a-1}$ for every
$X\in(0,X_{\a,\b;+})$ which is incompatible with (\ref{5.17}).
This proves that $\wtil J_{\a,\b}(X)<0$, $X\in(0,X_{\a,\b;+})$,
and hence that $J_{\a,\b}$ is a decreasing function in $(0,X_{\a,\b;+})$.
It remains only to show that $J_{\a,\b}(X_{\a,\b;+})$ is positive 
for $x_0<-M_{\a,\b}$. But, since $\wtil G_{\a,\b}(X_{\a,\b;+})=0$, 
this trivially follows from definition (\ref{5.10}) 
completing the proof of the case {\it ii-b)}.

\noindent{\it iii) Case $\a\in(0,1)$}. 
Since in this case the power $\g_\a=(\a+1)/(\a-1)$ is less than $-1$, 
from (\ref{5.3}) it follows that $H_{\a,\b}'(X)> 0$ if and only if 
$[G_{\a,\b}(X)]^{(\a-1)\g_\a}>\a^{-\g_\a}|\b|^{\g_\a}$,
i.e. if an only if $X^2<x_0^2-c_{\a,\b}$. Therefore, if $x_0\in[-M_{\a,\b},0)$,
then we have $H_{\a,\b}'(X)\le 0$ for every $X\in[0,-x_0)$ and $H_{\a,\b}$
is a non increasing function in $[0,-x_0)$. Hence, (\ref{4.43})
follows from $h_{\a,\b}(0)=H_{\a,\b}(-x_0)\le H_{\a,\b}(X)
\le H_{\a,\b}(0)=h_{\a,\b}(x_0)$. On the contrary, if $x_0<-M_{\a,\b}$,
then we have $H_{\a,\b}'(X)\ge 0$ in $[0,X_{\a,\b;+}]$ and $H_{\a,\b}'(X)\le 0$
in $[X_{\a,\b;+},-x_0)$, so that $H_{\a,\b}$ is non decreasing in $[0,X_{\a,\b;+}]$
and non increasing in $[X_{\a,\b;+},-x_0)$. Thus, (\ref{4.44}) follows
from $\min\{h_{\a,\b}(x_0),h_{\a,\b}(0)\}=\min\{H_{\a,\b}(0),H_{\a,\b}(-x_0)\}
\le H_{\a,\b}(X)\le H_{\a,\b}(X_{\a,\b;+})=h_{\a,\b}(x_{\a,\b;+})$.
To complete the proof, let us first assume $x_0\in[-M_{\a,\b},0)$.
Since $\a\in(0,1)$, this time the function
$\wtil G_{\a,\b}(X)=1-\a|\b|^{-1}[G_{\a,\b}(X)]^{\a-1}$, $X\in[0,-x_0)$, 
is non positive and non increasing (see (\ref{5.5})). 
Consequently,  the function 
$\wtil H_{\a,\b}(X)=[H_{\a,\b}(X)]^{-1}X$, $X\in[0,-x_0)$, being
nonnegative and non decreasing (see (\ref{5.4})), if we take
$0\le X_1\le X_2<-x_0$, from $\wtil G_{\a,\b}(X_2)\le\wtil G_{\a,\b}(X_1)\le 0$ 
and $0\le \wtil H_{\a,\b}(X_1)\le\wtil H_{\a,\b}(X_2)$  we derive
$H_{\a,\b}'(X_2)=\wtil G_{\a,\b}(X_2)\wtil H_{\a,\b}(X_2)
\le \wtil G_{\a,\b}(X_1)\wtil H_{\a,\b}(X_1)=H_{\a,\b}'(X_1)\le 0$. 
Thus $H_{\a,\b}'$ is a non increasing function
or, equivalently, $H_{\a,\b}$ is a concave function 
completing the proof of {\it iii-a)}. Let now $x_0<-M_{\a,\b}$.
Then, the non increasing function $\wtil G_{\a,\b}$ is 
positive in $[0,X_{\a,\b;+})$, vanishes at $X=X_{\a,\b;+}$ and 
is negative in $(X_{\a,\b;+},-x_0)$, so the previous argument may be 
employed only to show that $H_{\a,\b}$ is concave in $[X_{\a,\b,+},-x_0)$. 
As far as the interval $[0,X_{\a,\b,+}]$ is concerned, this time 
the function $N_{\a,\b}$ defined by (\ref{5.7}) satisfies (\ref{5.8}) 
with the reversed inequalities, i.e. $N_{\a,\b}(0)>0$ and 
$N_{\a,\b}(X_{\a,\b;+})<0$. Therefore
$H_{\a,\b}''(0)>0>H_{\a,\b}''(X_{\a,\b;+})$ and to complete the proof
it suffices to show that $N_{\a,\b}$ is a decreasing function in $[0,X_{\a,\b;+}]$.
Due to (\ref{5.9}), we only need to prove that the function $J_{\a,\b}$
defined by (\ref{5.10}) is negative in $(0,X_{\a,\b;+})$.
This is true, since all the three terms in (\ref{5.10}) are negative 
in $(0,X_{\a,\b;+})$. Indeed, the last two terms are clearly negative 
for $\a\in(0,1)$, but the first term is negative, too. For, when $\a\in(0,1)$, 
$\wtil G_{\a,\b}(X)>0$, $X\in(0,X_{\a,\b;+})$,
and $(\a-1)^2X^2-(\a+1)x_0^2<0$, $X\in(0,s_{\a,x_0})$,
where $s_{\a,x_0}=-(1-\a)^{-1}(\a+1)^{1/2}x_0>-x_0$.
This completes the proof.
\begin{remark}\label{rem5.1}
\emph{We stress that in the previous proof 
the standard procedure of calculus for locating the
inflection point of $H_{\a,\b}$, $\a>0>\b$, $\a\neq 1$,
is not profitable, due to the difficulty in studying the sign of $H_{\a,\b}''$
(see formulae (\ref{5.6}) and (\ref{5.7})).}
\end{remark}
As usual, for any function $f:I\subset\rsp\to\rsp$, $I$ an interval, we denote
by $f^+=\max\{f,0\}$ and $f^-=\max\{-f,0\}$ its positive and negative parts, respectively, such that $f=f^+-f^-$, $|f|=f^++f^-$ and $-f^-\le f\le f^+$.
\vskip 0,3truecm
\noindent{\it Proof of Corollary \ref{cor4.10}.}
Let $x_\a$ and $\wtil x_\a$, $\a>0$, be the points defined by (\ref{4.46}).
Of course, the function $\theta_\a$ in (\ref{4.29})
decreases for $x\in[2x_0,\wtil x_\a]$ and increases for $x\in[\wtil x_\a,0]$.
Moreover, it is positive in $[2x_0,x_\a)$, 
negative in $(x_\a,0)$, vanishes at $x=x_\a$ and $x=0$, 
and satisfies $\theta_\a(2x_0)=2(4+\a)(2-d_\a)x_0^2=6x_0^2$ for every $\a>0$.
Recall also that $x_\a\in(2x_0,x_0)$. 
Then, $h_{\a,\b}$ being a positive function, we have
\beqn\no
&&\hskip -1truecm
\Theta_{\a,\b}^+(x)=\left\{\!\!
\begin{array}{lll}
\Theta_{\a,\b}(x),\ x\in [2x_0,x_\a],
\\[2mm]
0,\ x\in[x_\a,0],
\end{array}
\right.
\q
\Theta_{\a,\b}^-(x)=\left\{\!\!
\begin{array}{lll}
0,\ x\in [2x_0,x_\a],
\\[2mm]
-\Theta_{\a,\b}(x),\ x\in[x_\a,0].
\end{array}
\right.
\eeqn
{\it i) Case $\a=1$}. 
If $\b\le-1$, then Lemma \ref{lem4.8}{\it i)} implies that $h_{1,\b}$ 
does not increase in $[2x_0,x_0]$ and hence $h_{1,\b}(x_1)\le h_{1,\b}(x)$ 
for every $x\in[2x_0,x_1]$, $d_1=7/5$. Therefore
\beqn\no
&&\hskip -1truecm
\Theta_{1,\b}(x)\le \Theta_{1,\b}^+(x)
\le[h_{1,\b}(x_1)]^{-1}\theta_1(2x_0),\q\forall\,x\in[2x_0,0].
\eeqn
Instead, the estimate from below follows by combining 
the inequalities $\theta_1(\wtil x_1)\le \theta_1(x)<0$ and 
$0<h_{1,\b}(x_0)\le h_{1,\b}(x)$ for $x\in(x_1,0]$, which yield 
\beqn\no
&&\hskip -1truecm
[h_{1,\b}(x_0)]^{-1}\theta_1(\wtil x_1)\le -\Theta_{1,\b}^-(x)\le \Theta_{1,\b}(x),
\q\forall\,x\in[2x_0,0].
\eeqn
If $\b\in(-1,0)$, then $h_{1,\b}$ is concave and attains 
its minimum value $|x_0|$ at the point $x=2x_0$ and $x=0$. 
We thus find $h_{1,\b}(2x_0)\le h_{1,\b}(x)$
for every $x\in[2x_0,x_1]$ and $h_{1,\b}(0)\le h_{1,\b}(x)$ for
every $x\in[x_1,0]$. These yield
\beqn\no
&&\hskip -1truecm
[h_{1,\b}(0)]^{-1}\theta_1(\wtil x_1)\le
-\Theta_{1,\b}^-(x)\le \Theta_{1,\b}(x)\le\Theta_{1,\b}^+(x)
\le\Theta_{1,\b}(2x_0),\q\forall\,x\in[2x_0,0],
\eeqn
completing the proof of (\ref{4.49})--(\ref{4.51}).

\noindent
{\it ii) Case $\a>1$}.
Assume first $x_0\in[-M_{\a,\b},0)$, $\b<0$. 
In this case the proof of (\ref{4.52}) is the same as that for the
case $\a=1$, $\b\le-1$. In fact, due to Lemma \ref{lem4.8}{\it ii-a)} 
we have that $h_{\a,\b}$ does not increase in $[2x_0,x_0]$ and hence
$h_{\a,\b}(x_\a)\le h_{\a,\b}(x)$ for $x\in[2x_0,x_\a]$. 
Therefore
\beqn\no
&&\hskip -1truecm
\Theta_{\a,\b}(x)\le \Theta_{\a,\b}^+(x)
\le[h_{\a,\b}(x_\a)]^{-1}\theta_\a(2x_0),\q\forall\,x\in[2x_0,0].
\eeqn
The estimate from below follows by combining 
$\theta_\a(\wtil x_\a)\le \theta_\a(x)<0$ and 
$0<h_{\a,\b}(x_0)\le h_{\a,\b}(x)$ for $x\in(x_\a,0)$, which yield 
\beqn\no
&&\hskip -1truecm
[h_{\a,\b}(x_0)]^{-1}\theta_\a(\wtil x_\a)\le -\Theta_{\a,\b}^-(x)\le
\Theta_{\a,\b}(x),\q\forall\,x\in[2x_0,0],
\eeqn
i.e. (\ref{4.52}) with $C_j(\a,\b,x_0)$, $j=8,9$, 
being defined by the first expressions in (\ref{4.53}) and (\ref{4.54}).
Let us now take $x_0<-M_{\a,\b}$. In this case, according to 
Lemma \ref{lem4.8}{\it ii-b)}, the function $h_{\a,\b}$ attains 
its least value at both the points $x_{\a,\b;\pm}$. Also, being convex in 
$[2x_0,2x_0-\wtil x_{\a,\b}]\cup[\wtil x_{\a,\b},0]$, $\wtil x_{\a,\b}\in(x_0,x_{\a,\b;+})$,
and concave in $[2x_0-\wtil x_{\a,\b},\wtil x_{\a,\b}]$ with a local maximum 
at $x=x_0$, it decreases in $[2x_0,x_{\a,\b;-}]\cup[x_0,x_{\a,\b;+}]$.
We then distinguish the two sub-cases $x_0\in(-E_{\a,\b},-M_{\a,\b})$ 
and $x_0\le-E_{\a,\b}$, corresponding to $x_\a< x_{\a,\b;-}$ 
and $x_{\a,\b;-}\le x_\a$, respectively.
Let first be $x_0\in(-E_{\a,\b},-M_{\a,\b})$. 
Since $x_\a<x_{\a,\b;-}$ and $h_{\a,\b}$ decreases in $[2x_0,x_{\a,\b;-}]$,
the same reasonings as above for the case $x_0\in[-M_{\a,\b},0)$ lead to 
\beqn\no
&&\hskip -1truecm
[h_{\a,\b}(x_{\a,\b;+})]^{-1}\theta_\a(\wtil x_\a)\le \Theta_{\a,\b}(x)
\le  [h_{\a,\b}(x_\a)]^{-1}\theta_\a(2x_0),\q\forall\,x\in[2x_0,0],
\eeqn
i.e. (\ref{4.52}) with $C_{8}(\a,\b,x_0)$ and $C_{9}(\a,\b,x_0)$ 
being defined, respectively, by the second expression in (\ref{4.53}) 
and the first expression in (\ref{4.54}). 
Finally, if $x_0\le-E_{\a,\b}$, since $x_{\a,\b;-}\le x_\a$, 
estimate (\ref{4.52}), with $C_j(\a,\b,x_0)$, $j=8,9$, being defined 
by the second expressions in (\ref{4.53}) and (\ref{4.54}), 
follows from $\Theta_{\a,\b}^+(x)\le [h_{\a,\b}(x_{\a,\b;-})]^{-1}\theta_\a(2x_0)$ 
for $x\in[2x_0,x_\a]$ and 
$[h_{\a,\b}(x_{\a,\b;+})]^{-1}\theta_\a(\wtil x_\a)\le -\Theta_{\a,\b}^-(x)$ 
for $x\in[x_\a,0]$. 

\noindent
{\it iii) Case $\a\in(0,1)$}. 
As before, let first $x_0\in[-M_{\a,\b},0)$. Due to Lemma \ref{lem4.8}{\it iii-a)} 
we have that $h_{\a,\b}$ assumes its least value at both the points 
$x=2x_0$ and $x=0$. Consequently
\beqn
&&\hskip -1truecm
\Theta_{\a,\b}^+(x)\le\Theta_{\a,\b}(2x_0),
\q\forall\,x\in[2x_0,x_\a],\no
\\[2mm]
&&\hskip -1truecm
[h_{\a,\b}(0)]^{-1}\theta_\a(\wtil x_\a)\le-\Theta_{\a,\b}^-(x),
\q\forall x\in[x_\a,0],\no
\eeqn
completing the proof of (\ref{4.55}) with $C_j(\a,\b,x_0)$, $j=10,11$, 
being defined by the first expressions in (\ref{4.56}) and (\ref{4.57}).
Let now $x_0<-M_{\a,\b}$. Due to Lemma \ref{lem4.8}{\it iii-b)}
and the characterization (\ref{4.39}) of the constant $C_5(\a,\b,x_0)$,
we find that, if $x_0\in[-D_{\a,\b},-M_{\a,\b})$, then
$h_{\a,\b}(2x_0)\le h_{\a,\b}(x)$ for every $x\in[2x_0,x_\a]$
and $h_{\a,\b}(0)\le h_{\a,\b}(x)$ for every $x\in[x_\a,0]$.
It thus follows that for every $x_0\in[-D_{\a,\b},-M_{\a,\b})$
\beqn
&&\hskip -1truecm
\Theta_{\a,\b}^+(x)\le\Theta_{\a,\b}(2x_0),
\q\forall\,x\in[2x_0,x_\a],\no
\\[2mm]
&&\hskip -1truecm
[h_{\a,\b}(0)]^{-1}\theta_\a(\wtil x_\a)\le-\Theta_{\a,\b}^-(x),
\q\forall x\in[x_\a,0],\no
\eeqn
which prove (\ref{4.55}) with $C_j(\a,\b,x_0)$, $j=10,11$,
being defined by the first expressions in (\ref{4.56}) and (\ref{4.57}).
Finally, if $x_0<-D_{\a,\b}$, then 
$h_{\a,\b}$ increases in $[2x_0,x_{\a,\b;-}]\cup[x_0,x_{\a,\b;+}]$, 
decreases in $[x_{\a,\b;-},x_0]\cup[x_{\a,\b;+},0]$, and attains its
least value at the point $x=x_0$. Therefore, for every $x_0<-D_{\a,\b}$ 
we deduce
\beqn
&&\hskip -1truecm
\Theta_{\a,\b}^+(x)
\le[\min\{h_{\a,\b}(2x_0),h_{\a,\b}(x_\a)\}]^{-1}\theta_\a(2x_0),
\q\forall\,x\in[2x_0,x_\a],\no
\\[2mm]
&&\hskip -1truecm
[h_{\a,\b}(x_0)]^{-1}\theta_\a(\wtil x_\a)\le-\Theta_{\a,\b}^-(x),
\q\forall x\in[x_\a,0].\no
\eeqn
Due to (\ref{4.48}), the latter inequalities 
complete the proof of (\ref{4.55}), with $C_{10}(\a,\b,x_0)$
being defined by the second expression in (\ref{4.56}) and 
$C_{11}(\a,\b,x_0)$ being defined by the first or the second expression
in (\ref{4.57}) according that $x_0\in[-L_{\a,\b},-D_{\a,\b})$ or $x_0<-L_{\a,\b}$.
\begin{remark}\label{rem5.2}
\emph{Notice that, since from (\ref{5.3}) we obtain
$\Theta_{\a,\b}'(x)=[h_{\a,\b}(x)]^{-3}S_{\a,\b}(x)$ where
\beqn\no
&&\hskip -1truecm
S_{\a,\b}(x)=(4+\a)(2x-x_\a)[h_{\a,\b}(x)]^2
-\theta_\a(x)\{1-\a|\b|^{-1}[g_{\a,\b}(x)]^{\a-1}\}(x-x_0),
\eeqn
to find the greatest and least values of $\Theta_{\a,\b}$ 
by studying its first derivative $\Theta_{\a,\b}'$ it is not computationally amenable.
In some sense, the study of $\Theta_{\a,\b}'$ with the consequent 
location of the stationary points of $\Theta_{\a,\b}$ yields, more or less, 
to the same computational difficulties that we have 
highlighted in Remark \ref{rem5.1}, as regards to the study 
of $H_{\a,\b}''$ and the location of the inflection point of $H_{\a,\b}$.}
\end{remark}
\noindent{\it Proof of Lemma \ref{lem4.13}.}
{\it i) Case $\a=1/2$}. 
In this case, since $\varphi_{1/2,\b}=6|\b|^{-1}x$ and 
$g_{1/2,\b}=[3|\b|x(2x_0-x)/4]^{2/3}$, the function $\psi_{\a,\b}$ reduces 
to $\psi_{1/2,\b}=-3|\b|^{-1}x\psi_\b(x)$, $x\in[2x_0,0]$, where
\beqn\label{5.18}
&&\hskip -1truecm
\psi_\b(x)=|\b|^2x^2-3|\b|^2x_0x+2|\b|^2x_0^2-2.
\eeqn
Therefore, $\psi_{1/2,\b}(x)\ge 0$, $x\in[2x_0,0]$, if and only if 
$\psi_\b(x)\ge 0$, $x\in[2x_0,0]$. 
But, $\psi_\b(x)\ge 0$ if and only if $x\le x_{\b;-}$ and $x\ge x_{\b;+}$, where
the points $x_{\b;\pm}$ are defined in (\ref{4.59}).
Then, since $x_{\b;-}<2x_0$ for every $\b<0$, 
the function $\psi_{1/2,\b}$ turns out to be non positive 
in the entire interval $[2x_0,0]$ in the case $x_{\b;+}\ge 0$, 
corresponding to the choice $x_0\in[-|\b|^{-1},0)$. 
We thus split the proof of the case $\a=1/2$ 
in the two sub-cases $x_0\in[-|\b|^{-1},0)$ and $x_0<-|\b|^{-1}$.
Let first $x_0\in[-|\b|^{-1},0)$. 
Differentiating $\psi_{1/2,\b}$ with respect to $x$ we find
\beqn\no
&&\hskip -1truecm
\psi_{1/2,\b}'(x)=
-3|\b|^{-1}\big[3|\b|^2x^2-6|\b|^2x_0x+2|\b|^2x_0^2-2\big],\q x\in(2x_0,0),
\eeqn
so that $\psi_{1/2,\b}$ is non decreasing for $x\in[\wtil x_{\b;-},\wtil x_{\b;+}]$
and non increasing for $x\in[2x_0,\wtil x_{\b;-}]\cup[\wtil x_{\b;+},0]$, where
the points $\wtil x_{\b;\pm}$ are defined in (\ref{4.59}).
Now, since $x_0\in[-|\b|^{-1},0)$, we have $\wtil x_{\b;-}\le 2x_0$ and
$\wtil x_{\b;+}\ge 0$ and $\psi_{1/2,\b}$ is non decreasing in the
entire interval $[2x_0,0]$. Thus, 
$12|\b|^{-1}x_0=\psi_{1/2,\b}(2x_0)\le\psi_{1/2,\b}(x)<\psi_{1/2,\b}(0)=0$
for every $x\in[2x_0,0)$. Combining these estimates
with (\ref{4.43}) and (\ref{4.44}) and using
$D_{1/2,\b}=(3/4)|\b|^{-2}$ we obtain (\ref{4.62}) and (\ref{4.63}).
Let $x_0<-|\b|^{-1}$. In this case the points $x_{\b;+}$ and $\wtil x_{\b;\pm}$
are interior to the interval $[2x_0,0]$ and the function $\psi_{1/2,\b}$
is negative in $[2x_0,x_\b)$, positive in $(x_\b,0)$ and vanishes at $x=x_\b$
and $x=0$. In particular, $\wtil x_{\b;-}<x_{\b;+}<\wtil x_{\b;+}$,
so $\psi_{1/2,\b}(\wtil x_{\b;-})<0<\psi_{1/2,\b}(\wtil x_{\b;+})$. 
The inequality $\wtil x_{\b;-}<x_{\b;+}$ is obvious, 
since $\wtil x_{\b;-}<x_0<x_{\b;+}$. 
Also, since $\psi_{1/2,\b}(x_{\b;+})=\psi_{1/2,\b}(0)=0$ and $\psi_{1/2,\b}>0$ 
in $(x_{\b;+},0)$, as a consequence of Rolle's theorem the 
differentiable function $\psi_{1/2,\b}$ attains a positive maximum 
in some point of $(x_{\b;+},0)$, which is necessary a stationary point. 
This proves $x_{\b;+}<\wtil x_{\b;+}$.
Therefore, the estimate from above in (\ref{4.64}) simply follows by combining 
$0<\psi_{1/2,\b}(x)\le\psi_{1/2,\b}(\wtil x_{\b;+})$, $x\in(x_{\b;+},0)$ 
with $h_{1/2,\b}(0)\le h_{1/2,\b}(x)$, $x\in[x_{\b;+},0]$, $x_0\in[-D_{1/2,\b},0)$, 
and $\min\{h_{1/2,\b}(x_{\b;+}),h_{1/2,\b}(0)\}\le h_{1/2,\b}(x)$, $x\in[x_{\b;+},0]$,
$x_0<-D_{1/2,\b}$. On the contrary, the estimate from below in (\ref{4.64})
follows by combining $\psi_{1/2,\b}(\wtil x_{\b;-})\le\psi_{1/2,\b}(x)<0$,
$x\in[2x_0,x_{\b;+})$, with $h_{1/2,\b}(2x_0)\le h_{1/2,\b}(x)$, 
$x\in[2x_0,x_{\b;+})$, $x_0\in[-D_{1/2,\b},0)$, 
and $h_{1/2,\b}(x_0)\le h_{1/2,\b}(x)$, $x\in[2x_0,x_{\b;+})$, $x_0<-D_{1/2,\b}$. 

\noindent
{\it ii) Case $\a\in(1/2,1)$}. 
Due to (\ref{4.30}), the function $\varphi_{\a,\b}$ is negative in $(2x_0,0)$ 
and vanishes at $x=2x_0$ and $x=0$, whereas the function $\phi_{\a,\b}$
is negative in $(2x_0,x_0)$, positive in $(x_0,0)$ and vanishes at $x=2x_0$,
$x=x_0$ and $x=0$. Moreover, differentiating $\varphi_{\a,\b}$ and
$\phi_{\a,\b}$ with respect to $x$, using 
$g_{\a,\b}'(x)=\b(x-x_0)[g_{\a,\b}(x)]^{-\a}$, $x\in(2x_0,0)$, 
and $[g_{\a,\b}(x)]^{\a+1}=(\a+1)|\b|x(2x_0-x)/2$,
we obtain for every $x\in(2x_0,0)$
\beqn\label{5.19}
\hskip -0,5truecm
\varphi_{\a,\b}'(x)
&\!\!\!=\!\!\!&
6|\b|^{-1}\big\{[g_{\a,\b}(x)]^{(2\a-1)/2}
+[(2\a-1)/2]x[g_{\a,\b}(x)]^{(2\a-3)/2}g_{\a,\b}'(x)\big\}\no
\\[2mm]
\hskip -0,5truecm
&\!\!\!=\!\!\!&
6|\b|^{-1}\big\{[g_{\a,\b}(x)]^{\a+1}
+[(2\a-1)/2]\b x(x-x_0)\big\}[g_{\a,\b}(x)]^{-3/2}\no
\\[2mm]
\hskip -0,5truecm
&\!\!\!=\!\!\!&
-3x[3\a x-(4\a+1)x_0][g_{\a,\b}(x)]^{-3/2},
\\[2mm]
\label{5.20}
\hskip -0,5truecm
\phi_{\a,\b}'(x)
&\!\!\!=\!\!\!&
4\big\{[g_{\a,\b}(x)]^{3/2}+(3/2)(x-x_0)[g_{\a,\b}(x)]^{1/2}g_{\a,\b}'(x)\big\}\no
\\[2mm]
\hskip -0,5truecm
&\!\!\!=\!\!\!&
4\big\{[g_{\a,\b}(x)]^{\a+1}+(3/2)\b(x-x_0)^2\big\}[g_{\a,\b}(x)]^{(1-2\a)/2}\no
\\[2mm]
\hskip -0,5truecm
&\!\!\!=\!\!\!&
-2|\b|[(4+\a)x^2-2(4+\a)x_0x+3x_0^2][g_{\a,\b}(x)]^{(1-2\a)/2}.
\eeqn
From (\ref{5.19}) we find that $\varphi_{\a,\b}$  is non decreasing in $[\ov x_\a,0]$,
non increasing in $[2x_0,\ov x_\a]$, and attains its least negative value
at $x=\ov x_\a$, where $\ov x_\a\in[2x_0,x_0)$ is defined in (\ref{4.58}). 
Instead, from (\ref{5.20}) we deduce that $\phi_{\a,\b}$ is non decreasing
in $[\ov x_{\a;-},\ov x_{\a;+}]$, non increasing in $[2x_0,\ov x_{\a;-}]\cup
[\ov x_{\a;+},0]$, and attains its least negative value and its greatest positive 
value at the points $\ov x_{\a;-}$ and $\ov x_{\a;+}$, respectively,
where the points $\ov x_{\a;\pm}$ are defined in (\ref{4.58}).
Consequently, $\varphi_{\a,\b}$ being non positive, 
from $\psi_{\a,\b}=\varphi_{\a,\b}+\phi_{\a,\b}$ we get
\beqn\label{5.21}
&&\hskip -1truecm
\varphi_{\a,\b}(\ov x_\a)+\phi_{\a,\b}(\ov x_{\a;-})
\le\psi_{\a,\b}(x)\le\phi_{\a,\b}(\ov x_{\a;+}),\q\forall\,x\in[2x_0,0].
\eeqn
We stress that the previous arguments, and, in particular, 
formulae (\ref{5.19})--(\ref{5.21}) holds for every 
$\a\ge 1/2$ and not only for $\a\in(1/2,1)$.
Now, combining (\ref{5.21}) with Lemma \ref{lem4.8}{\it iii)}, i.e. with 
$h_{\a,\b}(2x_0)=h_{\a,\b}(0)\le h_{\a,\b}(x)$, $x\in[2x_0,0]$, 
$x_0\in[-D_{\a,\b},0)$, and $h_{\a,\b}(x_0)\le h_{\a,\b}(x)$, $x\in[2x_0,0]$,
$x_0<-D_{\a,\b}$, we obtain (\ref{4.67})--(\ref{4.69}).

\noindent
{\it iii) Case $\a=1$}. 
Letting $\a=1$ in (\ref{5.19})--(\ref{5.21})  we deduce
\beqn\label{5.22}
&&\hskip -1truecm
\varphi_{1,\b}(\ov x_1)+\phi_{\a,\b}(\ov x_{1;-})
\le\psi_{1,\b}(x)\le\phi_{1,\b}(\ov x_{1;+}),\q\forall\,x\in[2x_0,0].
\eeqn
Therefore, (\ref{4.70})--(\ref{4.72}) simply follow by combining (\ref{5.22})
with Lemma \ref{lem4.8}{\it i)}.

\noindent
{\it iv) Case $\a\in(1,2)\cup(2,+\infty)$}.
To obtain (\ref{4.73})--(\ref{4.75}) it suffices to combine (\ref{5.21}) 
with Lemma \ref{lem4.8}{\it ii)}, i.e.
with $h_{\a,\b}(x_0)\le h_{\a,\b}(x)$, $x\in[2x_0,0]$, $x_0\in[-M_{\a,\b},0)$,
and $h_{\a,\b}(x_{\a,\b;-})=h_{\a,\b}(x_{\a,\b;+})\le h_{\a,\b}(x)$,
$x\in[2x_0,0]$, $x_0<-M_{\a,\b}$.

\noindent
{\it v) Case $\a=2$}. 
Of course, taking $\a=2$ in (\ref{5.21}), we can still proceed as in the case {\it iv)}. 
On the other side, since when $\a=2$ we have $(2\a-1)/2=3/2$, 
the function $\psi_{\a,\b}=\varphi_{\a,\b}+\phi_{\a,\b}$ reduces to 
$\psi_{2,\b}(x)=2|\b|^{-1}[(3+2|\b|)x-2|\b|x_0][g_{2,\b}(x)]^{3/2}$
and we can obtain a more precise result.
Indeed, differentiating $\psi_{2,\b}$ with respect to $x$ and using 
$g_{2,\b}'(x)=\b(x-x_0)[g_{2,\b}(x)]^{-2}$ and $[g_{2,\b}(x)]^3=3|\b|x(2x_0-x)/2$, 
or, equivalently, summing up formulae (\ref{5.19}) and (\ref{5.20}) with
$\a=2$, we obtain
\beqn\label{5.23}
&&\hskip -1truecm
\psi_{2,\b}'(x)=-3[2(3+2|\b|)x^2-(9+8|\b|)x_0x+2|\b|x_0^2][g_{2,\b}(x)]^{-3/2},
\q x\in(2x_0,0).
\eeqn
Hence, from (\ref{5.23}) it follows that $\psi_{2,\b}$ is 
non decreasing in $[\widehat x_{\b;-},\widehat x_{\b;+}]$ and 
non increasing in $[2x_0,\widehat x_{\b;-}]\cup[\widehat x_{\b;+},0]$, 
where the points $\widehat x_{\b;+}$ are defined in (\ref{4.60}).
In addition, $x_\b\in(x_0,\widehat x_{\b;+})$ being defined in (\ref{4.60}),
$\psi_{2,\b}$ is negative in $(2x_0,x_\b)$, positive in $(x_\b,0)$
and vanishes at $x=2x_0$, $x=x_\b$ and $x=0$.
We thus have
\beqn\no
&&\hskip -1truecm
\Psi_{2,\b}^-(x)=
\left\{\!\!
\begin{array}{lll}
-\Psi_{2,\b}(x),\ x\in [2x_0,x_\b],
\\[2mm]
0,\ x\in[x_\b,0],
\end{array}
\right.
\q
\Psi_{2,\b}^+(x)=
\left\{\!\!
\begin{array}{lll}
0,\ x\in[2x_0,x_\b],
\\[2mm]
\Psi_{2,\b}(x),\ x\in [x_\b,0].
\end{array}
\right.
\eeqn
Let $x_0\in[-M_{2,\b},0)$ where $M_{2,\b}=|\b|/(2\sqrt 3)$. 
Due to Lemma \ref{lem4.8}{\it ii-a)}, we have
$h_{2,\b}(x_0)\le h_{2,\b}(x)$ for every $x\in[2x_0,x_\b]$ and
$h_{2,\b}(x_\b)\le h_{2,\b}(x)$ for every $x\in[x_\b,0]$. 
Therefore, from $\psi_{2,\b}(\widehat x_{\b;-})\le\psi_{2,\b}(x)<0$,
$x\in(2x_0,x_\b)$, and $0<\psi_{2,\b}(x)\le \psi_{2,\b}(\widehat x_{\b;+})$,
$x\in(x_\b,0)$, we find for every $x\in[2x_0,0]$
\beqn\no
&&\hskip -0,5truecm
[h_{2,\b}(x_0)]^{-1}\psi_{2,\b}(\widehat x_{\b;-})
\le-\Psi_{2,\b}^-(x)\le \Psi_{2,\b}(x)\le \Psi_{2,\b}^+(x)
\le [h_{2,\b}(x_\b)]^{-1}\psi_{2,\b}(\widehat x_{\b;+}).
\eeqn
This proves (\ref{4.76}) with $C_j(\b,x_0)$, $j=22,23$,
being defined by the firs expressions in (\ref{4.77}) and (\ref{4.78}).
Let us now assume $x_0<-M_{2,\b}$. 
According to what observed after the definition (\ref{4.61}) 
of the positive number $R_\b$, we distinguish the two sub-cases 
$x_0\in(-R_\b,-M_{2,\b})$ and $x_0\le -R_\b$, 
corresponding to $x_{2,\b;+}<x_\b$ and $x_\b\le x_{2,\b;+}$, respectively. 
Let first $x_0\in(-R_\b,-M_{2,\b})$. 
Since $x_{2,\b;+}<x_\b$ and $h_{2,\b}$ is increasing in $[x_{2,\b;+},0]$ 
by virtue of Lemma \ref{lem4.8}{\it ii-b)}, 
we have $h_{2,\b}(x_\b)\le h_{2,\b}(x)$ for $x\in[x_\b,0]$, 
whereas $h_{2,\b}(x_{2,\b;-})=h_{2,\b}(x_{2,\b;+})\le h_{2,\b}(x)$ 
for $x\in[2x_0,x_\b]$. Then, estimate (\ref{4.76}), 
with $C_{22}(\b,x_0)$ and $C_{23}(\b,x_0)$ being defined, respectively,
by the second expression in (\ref{4.77}) and the first expression in (\ref{4.78}),
follows from 
\beqn\no
&&\hskip -1truecm
[h_{2,\b}(x_{2,\b;-})]^{-1}\psi_{2,\b}(\widehat x_{\b;-})\le-\Psi_{2,\b}^-(x),\q 
\forall\,x\in[2x_0,x_\b],
\\[2mm]
&&\hskip -1truecm 
\Psi_{2,\b}^+(x)\le[h_{2,\b}(x_\b)]^{-1}\psi_{2,\b}(\widehat x_{\b;+}),\q
\forall\,x\in[x_\b,0].\no
\eeqn 
Instead, if $x_0\le -R_\b$, since $x_\b\le x_{2,\b;+}$,
from $h_{2,\b}(x_{2,\b;+})\le h_{2,\b}(x)$, $x\in[x_\b,0]$, 
and $h_{2,\b}(x_{2,\b;-})\le h_{2,\b}(x)$, $x\in[2x_0,x_\b]$, 
we deduce for every $x\in[2x_0,0]$
\beqn\no
&&\hskip -0,5truecm
[h_{2,\b}(x_{2,\b;-})]^{-1}\psi_{2,\b}(\widehat x_{\b;-})
\le -\Psi_{2,\b}^-(x)\le \Psi_{2,\b}(x)\le 
\Psi_{2,\b}^+(x)\le[h_{2,\b}(x_{2,\b;+})]^{-1}\psi_{2,\b}(\widehat x_{\b;+}).
\eeqn
This completes the proof of (\ref{4.76})--(\ref{4.78}).
\begin{remark}\label{rem5.3}
\emph{We observe that also in the case of Lemma \ref{lem4.13}
to find the greatest and least values of $\Psi_{2,\b}$
by locating its stationary points is not profitable. 
For, $\Psi_{2,\b}'(x)=[h_{\a,\b}(x)]^{-3}\wtil S_{\a,\b}(x)$, where
\beqn\no
&&\hskip -1truecm
\wtil S_{\a,\b}(x)=[\varphi_{\a,\b}'(x)+\phi_{\a,\b}'(x)][h_{\a,\b}(x)]^2
-\psi_{\a,\b}(x)\{1-\a|\b|^{-1}[g_{\a,\b}(x)]^{\a-1}\}(x-x_0),
\eeqn
$\varphi_{\a,\b}'$ and $\phi_{\a,\b}'$ being given by (\ref{5.19}) and (\ref{5.20}).}
\end{remark}
\noindent{\it Proof of Corollary \ref{cor4.16}.}
{\it i) Case $\a=1/2$}. 
If $x_0\in[-|\b|^{-1},0)$, then the definition $C_{24}(1/2,\b,x_0):=|C_{12}(\b,x_0)|$ 
is obvious, since $C_{12}(\b,x_0)\le\psi_{1/2,\b}(x)\le 0$, $x\in[2x_0,0]$. 
If $x_0<-|\b|^{-1}$, then, due to (\ref{4.65}) and (\ref{4.66}), 
we have first to compare the positive values $-\psi_{1/2,\b}(\wtil x_{\b;-})$
and $\psi_{1/2,\b}(\wtil x_{\b;+})$, the points $\wtil x_{\b;\pm}$ being
defined in (\ref{4.59}). Here, for brevity, we set
$\wtil x_{\b;\pm}=x_0\pm t_{\b,x_0}$, where 
$t_{\b,x_0}=(6+3|\b|^2x_0^2)^{1/2}/(3|\b|)$.
Since $\psi_{1/2,\b}(x)=-3|\b|^{-1}x\psi_\b(x)$
and $\psi_\b(\wtil x_{\b;\pm})=|\b|^2t_{\b,x_0}^2\mp|\b|^2x_0t_{\b,x_0}-2$,
where $\psi_\b$ is defined by (\ref{5.18}), we obtain
\beqn\label{5.24}
&&\hskip -1truecm
\left\{\!
\begin{array}{rll}
-\psi_{1/2,\b}(\wtil x_{\b;-})
&\!\!\!=\!\!\!&
3|\b|^{-1}(x_0-t_{\b,x_0})(|\b|^2t_{\b,x_0}^2+|\b|^2x_0t_{\b,x_0}-2),
\\[2mm]
\psi_{1/2,\b}(\wtil x_{\b;+})
&\!\!\!=\!\!\!&
-3|\b|^{-1}(x_0+t_{\b,x_0})(|\b|^2t_{\b,x_0}^2-|\b|^2x_0t_{\b,x_0}-2).
\end{array}
\right.
\eeqn
Therefore
\beqn\no
-\psi_{1/2,\b}(\wtil x_{\b;-})-\psi_{1/2,\b}(\wtil x_{\b;+})=-12|\b|^{-1}x_0>0,
\eeqn
i.e. $-\psi_{1/2,\b}(\wtil x_{\b;-})>\psi_{1/2,\b}(\wtil x_{\b;+})$.
Now, since $h_{1/2,\b}(2x_0)=h_{1/2,\b}(0)$, the inequality 
$|C_{14}(\b,x_0)|>C_{15}(\b,x_0)$ simply follows by observing that, 
if $x_0<D_{1/2,\b}$, then from Lemma \ref{lem4.8}{\it iii-b)} and (\ref{4.39}) 
we have $h_{1/2,\b}(x_0)<h_{1/2,\b}(x)$, $x\in[2x_0,0]\backslash\{x_0\}$,
and, in particular, $h_{1/2,\b}(x_0)<\min\{h_{1/2,\b}(x_\b),h_{1/2,\b}(0)\}$,
$x_\b\in(x_0,0)$ being defined in (\ref{4.60}). 

\noindent
{\it ii) Case $\a\in(1/2,1)$}. 
Observe that $\phi_{\a,\b}$, $\a>0>\b$,
is an odd function with respect to the line $x=x_0$. 
Therefore, the points $\ov x_{\a;\pm}$ in (\ref{4.58}) 
being symmetric with respect to $x=x_0$, 
we have $\phi_{\a,\b}(\ov x_{\a;-})=-\phi_{\a,\b}(\ov x_{\a;+})$.
Then, since the function $\varphi_{\a,\b}$, $\a>1/2$, $\b<0$, 
is negative in $(2x_0,0)$ we obtain
\beqn\label{5.25}
-[\varphi_{\a,\b}(\ov x_\a)+\phi_{\a,\b}(\ov x_{\a;-})]
>-\phi_{\a,\b}(\ov x_{\a;-})=\phi_{\a,\b}(\ov x_{\a;+})>0,\q \a>1/2,\ \b<0.
\eeqn
Using $h_{\a,\b}(2x_0)=h_{\a,\b}(0)$ and (\ref{5.25}),
from (\ref{4.68}) and (\ref{4.69}) it thus follows 
$|C_{16}(\a,\b,x_0)|>C_{17}(\a,\b,x_0)$,
which proves $C_{24}(\a,\b,x_0)=|C_{16}(\a,\b,x_0)|$.

\noindent
{\it iii) Case $\a=1$}. 
Due to definitions (\ref{4.71}) and (\ref{4.72}), the proof of 
$C_{24}(1,\b,x_0)=|C_{18}(\b,x_0)|>C_{19}(\b,x_0)$ 
is the same as that of the case $\a\in(1/2,1)$.

\noindent
{\it iv) Case $\a\in(1,2)\cup(2,+\infty)$}.
Due to definitions (\ref{4.74}) and (\ref{4.75}), to prove
$C_{24}(\a,\b,x_0)=|C_{20}(\a,\b,x_0)|>C_{21}(\a,\b,x_0)$ 
it suffices to use (\ref{5.25}) and
$h_{\a,\b}(x_{\a,\b;-})=h_{\a,\b}(x_{\a,\b;+})$.

\noindent
{\it v) Case $\a=2$}. 
As in the proof of Lemma \ref{lem4.13} here we have 
\beqn\label{5.26}
&&\hskip -1truecm
\psi_{2,\b}(x)=2|\b|^{-1}[(3+2|\b|)x-2|\b|x_0][g_{2,\b}(x)]^{3/2},\q x\in[2x_0,0].
\eeqn
Therefore, due to definitions (\ref{4.77}) and (\ref{4.78}) of 
$C_{22}(\b,x_0)$ and $C_{23}(\b,x_0)$ and since
for every $x\in[2x_0,0]$ it holds
$h_{2,\b}(x_0)\le h_{2,\b}(x)$ if $x_0\in[-M_{2,\b},0)$, 
and $h_{2,\b}(x_{2,\b;-})=h_{2,\b}(x_{2,\b;+})\le h_{2,\b}(x)$ if $x_0<-M_{2,\b}$, 
to prove $C_{24}(2,\b,x_0)=|C_{22}(\b,x_0)|$ it suffices 
to show $-\psi_{2,\b}(\widehat x_{\b;-})>\psi_{2,\b}(\widehat x_{\b;+})$.
Now, using definition (\ref{4.60}) of the points $\widehat x_{\b;\pm}$,
from (\ref{5.26}) we get
\beqn\label{5.27}
&&\hskip -1truecm
\left\{\!
\begin{array}{rll}
-\psi_{2,\b}(\widehat x_{\b;-})
&\!\!\!=\!\!\!&
(2|\b|)^{-1}\big\{[8(3+2|\b|)^2+9]^{1/2}+9\big\}
|x_0|[g_{2,\b}(\widehat x_{\b;-})]^{3/2}>0,
\\[2mm]
\hskip -1truecm
\psi_{2,\b}(\widehat x_{\b;+})
&\!\!\!=\!\!\!&
(2|\b|)^{-1}\big\{[8(3+2|\b|)^2+9]^{1/2}-9\big\}
|x_0|[g_{2,\b}(\widehat x_{\b;+})]^{3/2}>0.
\end{array}
\right.
\eeqn
Therefore, to show 
$-\psi_{2,\b}(\widehat x_{\b;-})>\psi_{2,\b}(\widehat x_{\b;+})$
it suffices to prove that $g_{2,\b}(\widehat x_{\b;-})>g_{2,\b}(\widehat x_{\b;+})$.
To this purpose, recalling that $\widehat x_{\b;-}\in(2x_0,x_0)$ and
that $g_{\a,\b}$, $\a>0>\b$, is an even function with respect to the line $\{x=x_0\}$,
we rewrite $g_{2,\b}(\widehat x_{\b;-})$ as $g_{2,\b}(\widehat x_{\b;-}^{\;*})$ 
where $\widehat x_{\b;-}^{\;*}=2x_0-\widehat x_{\b;-}$. 
Now, the points $\widehat x_{\b;-}^{\;*}$ and $\widehat x_{\b;+}$
belong to $(x_0,0)$ and their distance from $x=x_0$ is given, respectively, by
\beqn\no
&&\hskip -1truecm
\widehat x_{\b;-}^{\;*}-x_0
=\frac{\big\{[8(3+2|\b|)^2+9]^{1/2}-3\big\}|x_0|}{4(3+2|\b|)},
\qq
\widehat x_{\b;+}-x_0
=\frac{\big\{[8(3+2|\b|)^2+9]^{1/2}+3\big\}|x_0|}{4(3+2|\b|)}.\no
\eeqn
Hence, $x_0<\widehat x_{\b;-}^{\;*}<\widehat x_{\b;+}$ 
and since $g_{\a,\b}$,  $\a>0>\b$, is decreasing in $(x_0,0)$ 
we deduce $g_{2,\b}(\widehat x_{\b;-})=
g_{2,\b}(\widehat x_{\b;-}^{\;*})>g_{2,\b}(\widehat x_{\b;+})$. 
This completes the proof.
\section{Final remarks}\label{Sec6}
\setcounter{equation}{0}
A natural question arising from the assumptions of Theorem \ref{thm4.20}
is if there is some global regularity, expressed in terms of the weighted Sobolev 
space $\wtil W_{AC\cup\s}^1(\Om)$, which implies the needed boundary 
regularity for $u$. Unfortunately, and as we shall explain briefly in a while, 
there are very few expectations that $\Re u, \Im u\in\wtil W_{AC\cup\s}^1(\Om)$ 
is sufficient to ensure the boundary regularity required for $u=\Re u+i\Im u$, 
and one might hope that such boundary regularity is instead a straightforward 
consequence of $u$ being an eigenfunction of problem (\ref{4.86}).
Indeed, as observed in Section \ref{Sec3}, at the moment
for the space $\wtil W_{AC\cup\s}^1(\Om)$
the only known embedding result is the following: 
if $\Om$ is a $D$-star-shaped normal Tricomi domain, 
then $\wtil W_{AC\cup\s}^1(\Om)$ is continuously and compactly embedded in
$L^p(\Om)$ for every $p\in[1,p^*)$, where $p^*=2N(N-2)^{-1}$, $N=5/2$. 
Of course, such an embedding is not sufficient to guarantee the boundary
regularity required in the statement of Theorem \ref{thm4.20}, but
is probably the best result that one can obtain for the 
weighted Sobolev space $W_{AC\cup\s}^1(\Om)$. 
In fact, for weighted Sobolev spaces whose weights are functions $\s(x,y)$
more general than $|y|$, the continuous and compact embedding mentioned
before is usually not satisfied. 
In order to avoid the introduction of a complicate notation 
which would yield us out of the aims of this paper, 
we refer for instance to \cite[Sections 3.8.2 and 3.8.3]{Tri} 
where it is shown that the spaces 
$W^{m,p}(\Om;\rho^\mu;\rho^{\mu+mp})$, $\mu\in\rsp$, 
are not embedded in $L^p(\Om,\rho^\nu)$ if $\nu>\mu+mp$, 
whereas the embedding exists continuous, but not compact, if $\nu=\mu+mp$.
Here $\rho$ is a positive given function such that 
$1/\rho$ is a mollification of the distance function from the boundary
and $\Om$ is a $C^\infty$ bounded domain in $\rsp^n$, $n\in\nsp$. 
Clearly, the situation considered in \cite{Tri} is far to be our case, but it
is however significant, in the sense that for functions
belonging to weighted Sobolev spaces not too much regularity
should be expected.
 
Another question related to Theorem \ref{thm4.20} is
that if the upper bound (\ref{4.87}) is in any way connected
with some asymptotic behaviour of the distribution of the eigenvalues 
of the Tricomi operator. At the moment we do not know whether there is an
affirmative answer to this question, but there is some hope for it.
Indeed, the proof of many results which provide eigenvalue asymptotics
for elliptic and degenerate elliptic operators are based upon upper bound
estimates for $\l\|u\|_{L^2(\Om)}^2$ in terms of the square $L^2(\Om)$-norm
of the gradient of $u$. For instance, in \cite{KS} such an
argument, but with $L^2(\Om)$ being replaced by the weighted space
$L_\om^2(\Om)$ where $\om$ is a Muckenhoupt weight, is employed
for proving asymptotics eigenvalue of a degenerate elliptic operator
of second order with positive potential. The main difference
here is that on the right-hand side of (\ref{4.87}) we do not have
volume integrals of the first partial derivatives of $u$, but only their
surface integrals on the subset $BC\cup\s$ of $\partial\Om$. Of course,
here we have the problem that the Tricomi operator $T$ is hyperbolic
in the half-plane $y<0$, so the quadratic form associated with
$T$ is indefinite for $y<0$. This prevents us to find the lower bounds
for the eigenvalues which together with the quoted upper bounds
yield asymptotic results.

Observe also that $T$ restricted to $y\ge 0$
is a degenerate elliptic operator of second order with the coefficient
of the second partial derivative with respect to $x$ which is precisely
the distance of a point $(x,y)$, $y\ge 0$, from the degeneracy line $y=0$.
However, even with this restriction, $T$ does not belong to the classes
of Tricomi differential degenerate operators of first and second type considered
in \cite[Chapter 7]{Tri} and for which asymptotics eigenvalue are
established in \cite[Sections 7.8.2 and 7.8.3]{Tri}. 
Moreover, since the degeneracy line $y=0$ is a smooth manifold 
of codimension one in $\rsp^2$, we are prevented to apply to $T$, 
considered as a degenerate elliptic operator in $y\ge 0$, 
the result in \cite[Theorem 6]{Go} with $n=2$. 
Indeed, in \cite{Go} asymptotics of eigenvalues are proved for degenerate
elliptic operators whose coefficients depend on the distance of the points 
of $\Om\subset\rsp^n$ from a $m$-dimensional smooth manifold 
$\wtil\G_m\subset\partial\Om$ having codimension $n-m\ge 2$, 
so that, when $n=2$, $\wtil\G_m$ reduces to a discrete set of points.

We stress that, due to Lemma \ref{lem4.3}, 
the normal Tricomi domains $\Om_{\a,\b}$ are $D$-star-shaped for $\a\ge1/2$.
Then, $\wtil W_{AC\cup\s}^1(\Om_{\a,\b})$ is continuously and compactly
embedded in $L^p(\Om_{\a,\b})$, $p\in[1,p^*]$. 
Since the proofs in \cite{CE} for a non-linear perturbations 
of the Schr\"odinger operator rely on the compactness 
of the embedding of a certain weighted Sobolev space in an $L^p$ space, 
there is the possibility that, if some asymptotics eigenvalue 
for the Tricomi operator $T$ were known, then the compact embedding 
$\wtil W_{AC\cup\s}(\Om_{\a,\b})\hookto L^p(\Om_{\a,\b})$, 
$\a\ge1/2$, $p\in[1,p^*]$, could be applied in some way 
to prove asymptotics eigenvalue of non-linear perturbations of $T$. 
This suggests that the open problem of finding, if any, 
asymptotics eigenvalue of the Tricomi operator is a fundamental one 
which needs to be solved before perturbed Tricomi operators
are taken into account.
In this regards, we hope that our upper bounds (\ref{4.86}) could be 
a first step towards a complete comprehension of 
at least the real eigenvalues of $T$. We conclude by noticing
that the problem of establishing lower bounds for the eigenvalues of $T$
seems, at moment, a really hard one. In fact, if any,
neither an isoperimetric inequality nor a Cheeger constant 
are available for the Tricomi operator. This lack prevents one
to rely on many of the proofs which establish lower bounds
for the eigenvalue of the Laplacian with Dirichlet, Neumann or Robin boundary conditions (see, for instance, \cite{CO} and \cite{Ke}).
\vskip 0,4truecm
\centerline\acks
\vskip 0,2truemm
The author wish to thank Professor Mauro Garavello  of
the Universit\`a del Piemonte Orientale `A. Avogadro" for
having checked the computations in the proof of
Lemma \ref{lem4.8} and having him pointed out some
misprints in the manuscript.


\begin{thebibliography}{666}
\bibitem[1]{Ag}
S. Agmon:
{\it Boundary value problems for equations of mixed type},
in `Convegno Internazionale sulle Equazioni a Derivate Parziali, Trieste, 1954",
Edizioni Cremonese, Roma, 1955, 54--68. 
\bibitem[2]{ANP}
S. Agmon, L. Nirenberg, M. H. Protter:
{\it A maximum principle for a class of hyperbolic equations and applications
to equations of mixed elliptic-hyperbolic types},
Comm. Pure Appl. Math. {\bf 6} (1953), 455--470.
\bibitem[3]{BN1}
J. Barros-Neto, I. M. Gelfand:
{\it Fundamental solutions for the Tricomi operator},
Duke Math. J. {\bf 98} (1999), 465--483.
\bibitem[4]{BN2}
J. Barros-Neto, I. M. Gelfand:
{\it Fundamental solutions for the Tricomi operator, II},
Duke Math. J. {\bf 111} (2002), 561--584; 
{\it Correction to fundamental solutions for the Tricomi operator, II},
Duke Math. J. {\bf 117} (2003), 385--387.
\bibitem[5]{BN3}
J. Barros-Neto, I. M. Gelfand:
{\it Fundamental solutions for the Tricomi operator, III},
Duke Math. J. {\bf 128} (2005), 119--140. 
\bibitem[6]{CO}
S. Y. Cheng, K. Oden:
{\it Isoperimetric inequalities and the gap between the first and the second eigenvalues of an Euclidean domain},
J. Geom. Anal. {\bf 7} (1997), 217--239.
\bibitem[7]{CE}
R. Chiappinelli, D. E. Edmunds:
{\it Eigenvalue asymptotics and a non-linear Schr\"odinger equation},
Israel J. Math. {\bf 81} (1993), 179--192.
\bibitem[8]{CL}
E. Coddington, N. Levinson:
Theory of Ordinary Differential Equations,
McGraw-Hill, New-York, 1955.
\bibitem[9]{Di}
V. P. Didenko:
{\it On the generalized solvability of the Tricomi problem},
Ukrainian Math. J. {\bf 25} (1973), 10--18.
\bibitem[10]{FL1}
B. Franchi, E. Lanconelli:
{\it H\"older regularity theorem for a class of linear nonuniformly 
elliptic operators with measurable coefficients}, 
Ann. Sc. Norm. Super. Pisa Cl. Sci. (4) {\bf 10} (1983), 523--541.
\bibitem[11]{FL2}
B. Franchi, E. Lanconelli:
{\it An embedding theorem for Sobolev spaces related to non smooth vector
fields and Harnack inequality},
Comm. Partial Differential Equations {\bf 9} (1984), 1237--1264.
\bibitem[12]{Fr}
F. I. Frankl:
{\it On the problems of Chaplygin for mixed sub- and supersonic flows},
Izv. Akad. Nauk SSSR {\bf 9} (1945), 121--143 (in Russian).
\bibitem[13]{Ga}
N. N. Gaidai:
{\it Existence of a spectrum for Tricomi's operator},
Differ. Equ. {\bf 17} (1981), 20--25.
\bibitem[14]{GB}
P. Germain, R. Bader:
{\it Sur quelques probl\`emes relatifs \`a l'\,\'equation de type mixte de Tricomi},
O. N. E. R. A. Pub. {\bf 54} (1952).
\bibitem[15]{Go}
A. F. Gorjunov:
{\it On the question of the asymptotic distribution of the eigenvalues of
second order degenerate elliptic operators},
Differ. Uravn. {\bf 6} (1970), 2214--2223 (in Russian).
\bibitem[16]{Ke}
J. Kennedy:
{\it An isoperimetric inequality for the second eigenvalue of the Laplacian
with Robin boundary conditions},
Proc. Amer. Math. Soc. {\bf 137} (2009), 627-633.
\bibitem[17]{KR}
H. Koch, F. Ricci:
{\it Spectral projections for the twisted Laplacian}
Studia Math. {\bf 180} (2007), 103--110.
\bibitem[18]{KT}
H. Koch, D. Tataru:
{\it $L^p$ eigenfunction bounds for the Hermite operator}
Duke Math. J. {\bf 128} (2005), 369--392.
\bibitem[19]{KS}
K. Kurata, S. Sugano:
{\it Fundamental solution, eigenvalue asymptotics and eigenfunctions 
of degenerate elliptic operators with positive potentials},
Studia Math. {\bf 138} (2000), 101--119.
\bibitem[20]{LP1}
D. Lupo, K. R. Payne:
{\it A dual variational approach to a class of nonlocal semilinear Tricomi problems},
NoDEA Nonlinear Differential Equation Appl. {\bf 6} (1999), 247--266.
\bibitem[21]{LP2}
D. Lupo, K. R. Payne:
{\it  On the maximum principle for generalized solutions to the Tricomi problem},
Commun. Contemp. Math. {\bf 2} (2000), 535--557.
\bibitem[22]{LP3}
D. Lupo, K. R. Payne:
{\it  Existence of a principal eigenvalue for the Tricomi problem},
Electron. J. Differ. Equ. Conf. {\bf 05} (2000), 173--180.
\bibitem[23]{LP4} 
D. Lupo, K. R. Payne:
{\it Spectral bounds for Tricomi problems and application to semilinear existence
and existence with uniqueness results},
J. Differential Equations {\bf 184} (2002), 139--162.
\bibitem[24]{LP5} 
D. Lupo, K. R. Payne:
{\it Critical exponents for semilinear equations of mixed elliptic-hyperbolic and degenerate types},
Comm. Pure Appl. Math. {\bf 56} (2003), 403--424.
\bibitem[25]{LP6} 
D. Lupo, K. R. Payne:
{\it Conservation laws for equations of mixed elliptic-hyperbolic and
degenerate types}, 
Duke Math. J. {\bf 127} (2005), 251--290.
\bibitem[26]{Mo1}
E. I. Moiseev:
{\it Some uniqueness theorems for an equation of mixed type},
Soviet. Math. Dokl. {\bf 19} (1978), 69--72.
\bibitem[27]{Mo2}
E. I. Moiseev:
{\it Uniqueness of solution of Tricomi problem for equation of mixed type},
Math. Notes {\bf 39} (1986), 388--394.
\bibitem[28]{Na}
A. M. Nakhushev:
{\it On the validity of an a priori estimate},
Soviet. Math. Dokl. {\bf 23} (1981), 241--243.
\bibitem[29]{Po}
S. I. Poho\v zaev:
{Eigenfunction of the equation $\Dl u+\l f(u)=0$},
Soviet Math. Dokl. {\bf 6} (1965), 1408--1411.
\bibitem[30]{Pr}
M. H. Protter:
{\it Uniqueness Theorems for the Tricomi problem},
J. Rational Mech. Anal. {\bf 2} (1953), 107--114.
\bibitem[31]{Tr} 
F. G. Tricomi: 
Equazioni a Derivate Parziali, Edizioni Cremonese, Roma, 1958.
\bibitem[32]{Tri} 
H. Triebel:
Interpolation Theory, Function Spaces, Differential Operators,
North Holland Publ. Co., Amsterdam, 1978.
\end{thebibliography}
\end{document}